\newtheorem{thm}[equation]{Theorem}
\newtheorem{lem}[equation]{Lemma}
\newtheorem{cor}[equation]{Corollary}
\newtheorem{prop}[equation]{Proposition}
\newtheorem*{thm*}{Theorem}
\newtheorem*{prop*}{Proposition}
\newtheorem*{cor*}{Corollary}
\newtheorem*{lem*}{Lemma}
\newtheorem*{MT*}{Main Theorem}
\theoremstyle{definition} %
\newtheorem{defn}[equation]{Definition}
\newtheorem*{defn*}{Definition}
\newtheorem{ques}[equation]{Question}
\newtheorem{eg}[equation]{Example}
\theoremstyle{remark} %
\newtheorem{rmk}[equation]{Remark}
\newtheorem*{rmk*}{Remark}
\newtheorem*{rmks*}{Remarks}
\newtheoremstyle{exercise}
  {3pt}
  {3pt}
  {\small}
  {}
  {\sc\small}
  {.}
  {.5em}
   {}     
  {}
\theoremstyle{exercise}
\renewcommand{\theequation}{#1}}%
\renewcommand{\theequation}{\arabic{equation}}\addtocounter{equation}{-1}\global\@ignoretrue}
\renewcommand{\theequation}{#1}\begin{eqnarray}}%
\renewcommand{\theequation}{\arabic{equation}}\addtocounter{equation}{-1}\global\@ignoretrue}
\newenvironment{borel}[1]%
{\smallskip \refstepcounter{equation}\noindent{\textbf{\theequation.} }{{\textbf{#1.}}}}%
{\smallskip \global\@ignoretrue}
\smallskip \refstepcounter{equation}{\sc \theequation}{\sc (#1).}}%
\smallskip \refstepcounter{equation}\noindent{\sc \theequation.}{\sl{ #1.}}}%
\newenvironment{borel*}%
{\smallskip \refstepcounter{equation}\noindent{\textbf{\theequation.}}}%
{\global\@ignoretrue}
\newcommand{\flist}[1]{\hangindent\leftmargini\textup{(1)}\hskip\labelsep {#1}%
\begin{enumerate}%
\setcounter{enumi}{1}%
}
\newcommand{\ot}{\otimes}
\newcommand{\darkrad}{0.115}
\newcommand{\lrad}{0.25}
\newcommand{\eand}{\quad\text{and}\quad}
\newcommand{\R}{{\mathbb{R}}}        
\newcommand{\Z}{{\mathbb{Z}}}        
\renewcommand{\H}{{\mathcal{H}}}  
\newcommand{\hyp}{\mathcal{H}}
\newcommand{\QZt}{\mathbb{Q}/\mathbb{Z}(2)}
\newcommand{\Zm}[1]{\Z/{#1}\Z}
\newcommand{\e}{\varepsilon}
\newcommand{\la}{\lambda}
\newcommand{\D}{\Delta}
\newcommand{\G}{{\Gamma}}       
\newcommand{\oddots}{{\mathinner{\mkern1mu\raise1pt\vbox{\kern7pt\hbox{.}}\mkern2mu\raise4pt\hbox{.}\mkern2mu\raise7pt\hbox{.}\mkern1mu}}}
\newcommand{\s}{\sigma}
\newcommand{\As}{{(A,\sigma)}}
\newcommand{\Bt}{{(B,\tau)}}
\newcommand{\supast}[1]{{{#1}^\times}}
\newcommand{\xsq}[1]{{{#1}^{\times2}}}
\newcommand{\sq}[1]{{\supast{#1}/\xsq{#1}}}
\newcommand{\kx}{k^\times}
\newcommand{\ksq}{\sq{k}}
\newcommand{\qform}[1]{{\left\langle{#1}\right\rangle}}                   
\newcommand{\pform}[1]{{\langle\!\langle{#1}\rangle\!\rangle}} 
\newcommand{\basemu}{\boldsymbol{\mu}}
\newcommand{\mmu}[1]{\basemu_{#1}}     
\DeclareMathOperator{\Spin}{Spin}           
\newcommand{\Sp}{\mathrm{Sp}}
\DeclareMathOperator{\SL}{SL}
\newcommand{\SO}{\mathrm{SO}}
\newcommand{\Ga}{\mathbb{G}_a}
\newcommand{\Gm}{\mathbb{G}_m}
\DeclareMathOperator{\tr}{tr}
\DeclareMathOperator{\im}{im}
\DeclareMathOperator{\Int}{Int}
\DeclareMathOperator{\chr}{char}
\DeclareMathOperator{\Id}{Id}
\DeclareMathOperator{\res}{res}
\DeclareMathOperator{\aut}{Aut}
\newcommand{\iso}{\xrightarrow{\sim}}
\newcommand{\injects}{\hookrightarrow}
\newcommand{\ra}{\rightarrow}
\newcommand{\tbtmat}[4]{\left( \begin{array}{cc} #1&#2 \\ #3&#4 \end{array} \right) }
\newcommand{\stcolvec}[2]{\left( \begin{smallmatrix} #1 \\ #2 \end{smallmatrix} \right) }
\newcommand{\stbtmat}[4]{\left( \begin{smallmatrix} #1&#2 \\ #3&#4 \end{smallmatrix} \right) }
 \numberwithin{equation}{section}
\numberwithin{figure}{section}
\numberwithin{table}{section}
\newcommand{\ehyp}{e^{\hyp}_3}
\newcommand{\esix}{e^{16}_3}
\newcommand{\ghat}{\hat{g}}
\newcommand{\fe}{\mathfrak{e}}
\newcommand{\sym}{\mathcal{S}}
\newcommand{\cprod}{{\mathinner{\mkern2mu\raise2.5pt\hbox{.}}}}
\newcommand{\cp}{\cprod}
\newcommand{\Qgen}{Q^{\mathrm{gen}}}
\newcommand{\gch}{\check{\gamma}}
\newcommand{\et}{\tilde{\e}}
\newcommand{\phit}{\tilde{\phi}}
\newcommand{\ehat}{{\hat{\eta}}}
\DeclareMathOperator{\ind}{ind}
\renewcommand{\hyp}{{\mathrm{hyp}}}
\newcommand{\rtE}{\mathsf{E_8}}
\newcommand{\rtD}{\mathsf{D_8}}
\newcommand{\rtC}{\mathsf{C_4}}
\DeclareMathOperator{\psp}{PSp}
\newcommand{\PSp}{\psp}
\DeclareMathOperator{\PGL}{PGL}
\DeclareMathOperator{\PSO}{PSO}
\renewcommand{\O}{\mathrm{O}}
\newcommand{\g}{\mathfrak{g}}
\renewcommand{\kill}{\mathrm{Kill}}
\newcommand{\rkill}{\mathrm{redKill}}
\DeclareMathOperator{\hspin}{HSpin}
\newcommand{\IH}{I $\Rightarrow$ H}
\newcommand{\binv}{{\bar{\ }}}
\newcommand{\Cg}{(C, \gamma)}
\begin{document}
 \title[Orthogonal involutions and $E_8$]{Orthogonal involutions on algebras of degree 16 and the Killing form of $E_8$}
 
 \dedicatory{With an appendix by Kirill Zainoulline}
 
 \author{Skip Garibaldi}
\address{(Garibaldi) Department of Mathematics \& Computer Science, Emory University, Atlanta, GA 30322, USA}
\email{skip@member.ams.org}
\urladdr{http://www.mathcs.emory.edu/{\textasciitilde}skip/}

\address{(Zainoulline) Mathematisches Institut der LMU M\"unchen,
Theresienstr.~39,
80333~M\"unchen, Germany}
\email{kirill@mathematik.uni-muenchen.de}

\subjclass[2000]{20G15 (11E88, 17B25)}



\setlength{\unitlength}{.75cm}

\begin{abstract}
We exploit various inclusions of algebraic groups to give a new construction of groups of type $E_8$, determine the Killing forms of the resulting $E_8$'s, and define an invariant of central simple algebras of degree 16 with orthogonal involution ``in $I^3$", equivalently, groups of type $D_8$ with a half-spin representation defined over the base field.  The determination of the Killing form is done by restricting the adjoint representation to various twisted forms of $\PGL_2$ and requires very little computation.

An appendix by Kirill Zainoulline contains a type of ``index reduction" result for groups of type $D$.
\end{abstract}

\maketitle

The first part of this paper (\S\S\ref{In.sec}--\ref{e3.sec}) extends the Arason invariant $e_3$ for quadratic forms in $I^3$ to central simple algebras $\As$ ``in $I^3$" (this term is defined in \S\ref{In.sec}) where $A$ has degree 16 or has a hyperbolic involution.  (The first case corresponds to simple linear algebraic groups of type $D_8$ with a half-spin representation defined over the base field.)  The invariant $e_3$ detects whether $\As$ is generically Pfister, see Cor.~\ref{pfinv} below.  We remark that the paper \cite{BPQ} appears to rule out the existence of such an invariant by a counterexample.  Our invariant exists exactly in the cases where their counterexample does not apply; surprisingly, this includes some interesting cases.  The proofs in this part are not difficult, but we include this material to provide background and context for the later results.  Proposition~\ref{AP} generalizes the Arason-Pfister Hauptsatz for quadratic forms of dimension $< 16$, and depends on a result of Kirill Zainoulline presented in Appendix \ref{kirill.sec}.

The real work begins in the second part of the paper (\S\S\ref{inclusion.sec}--\ref{e3dec.sec}), where we use the inclusion $\PGL_2 \times \psp_8 \subset \PSO_8$ to
give a formula for the Arason invariant  in case $\As$ can be written as a tensor product $(Q, \binv) \ot (C, \gamma)$, where $(Q, \binv)$ is a quaternion algebra with its canonical symplectic involution.

We apply the preceding results in the third part of the paper (\S\S\ref{const.sec}--\ref{conj.sec}) to studying algebraic groups of type $E_8$.  We give a construction of groups of type $E_8$ and compute the Rost invariant, Tits index (in some cases), and Killing form of the resulting $E_8$'s, see Th.~\ref{const.rost}, Prop.~\ref{index}, and Th.~\ref{E8.witt}.  We compute the Killing form by branching to subgroups of type $A_1$, which is somewhat cleaner than computations of other Killing forms in the literature.  

The $E_8$'s arising from our construction are an interesting class.  On the one hand, they are uncomplicated enough to be tractable.  On the other hand, up to odd-degree extensions of the ground field and assuming the validity of a pre-existing conjecture regarding groups of type $D_8$, they include all $E_8$'s whose Rost invariant is zero.  Our construction produces all $E_8$'s over every number field.

\tableofcontents

\subsection*{Notation and conventions} 
We work over a field $k$ of characteristic $\ne 2$. 
Throughout the paper, $\As$ denotes a central simple $k$-algebra with orthogonal involution.  

We often write $\binv$ for the canonical symplectic involution on a quaternion algebra; it will be clear by context which quaternion algebra is intended.  Similarly, we write $\hyp$ for a hyperbolic involution; context again will make it clear whether symplectic or orthogonal is intended.

For $g$ in a group $G$, we write $\Int(g)$ for the automorphism $x \mapsto gxg^{-1}$.

General background on algebras with involution can be found in \cite{KMRT}.  For the Rost invariant, see \cite{MG}.

\part{Extending the Arason invariant to orthogonal involutions}

\section{$I^n$} \label{In.sec}

\begin{defn} \label{In.def}
Let $\As$ be a central simple algebra with orthogonal involution over a field $k$ of characteristic $\ne 2$.  The function field $k_A$ of the Severi-Brauer variety of $A$ splits $A$, hence over $k_A$ the involution $\s$ is adjoint to a quadratic form $q_\s$.  As an abbreviation, we say that $\As$ is \emph{in $I^n k$} (or simply ``in $I^n$") if $q_\s$ belongs to $I^n k_A$, the $n$-th power of the fundamental ideal ideal in the Witt ring of $k_A$.  Clearly, if $\As$ is in $I^n k$, then $(A \ot L, \s \ot \Id)$ is in $I^n L$ for every extension $L/k$.

 We say that $\As$ is \emph{generically Pfister} if $q_\s$ is a Pfister form, or more precisely is \emph{generically $n$-Pfister} if $q_\s$ is an $n$-Pfister form.  
\end{defn}

This first part of the paper is concerned with cohomological invariants of $\As$ in case $\As$ is in $I^3$, especially when $A$ has degree 16.  For context, we give some properties of algebras with involution in $I^3$ or in $I^4$ of small degree.

\begin{eg} \label{In.egs}
\begin{enumerate}
\item $\As$ is in $I$ if and only if $\deg A$ is even.
\item $\As$ is in $I^2$ if and only if it is in $I$ and the discriminant of $\s$ is the identity in $\kx / k^{\times 2}$.
\item \label{In.3} $\As$ is in $I^3$ if and only if it is in $I^2$ and the even Clifford algebra $C_0\As$ is Brauer-equivalent to $A \times k$.
\item Suppose that $\As$ is Witt-equivalent to $(A', \s')$.  Then $\As$ is in $I^n$ if and only if $(A', \s')$ is in $I^n$.
\item \label{pf.deg} Suppose that $\deg A = 2^n$.  Then $\As$ is in $I^n$ if and only if $\As$ is generically Pfister \cite[X.5.6]{Lam}.
\item \label{PFC} Suppose that $\deg A = 2^n$ with $n \ge 2$.  If $\As$ is \emph{completely decomposable} (i.e., isomorphic to a tensor product of quaternion algebras with orthogonal involution), then $\As$ is generically Pfister by \cite{Becher}, hence is in $I^n$.
\end{enumerate}
\end{eg}

Items (1) through (3) show that the property of an algebra being in $I^n$ for $n \le 3$ can be detected by invariants defined over $k$, without going up to the generic splitting field $k_A$.  Below we construct an invariant that detects whether $\As$ belongs to $I^4$ for $A$ of degree 16.

\begin{ques} \label{4pf.ques}
The converse to \eqref{PFC} holds for $n = 1$ (trivial), $n  = 2$ \cite{KPS:16}, and $n  =3$ \cite[42.11]{KMRT}.  Does the converse also hold for $n  = 4$?
That is, \emph{does generically $4$-Pfister imply completely decomposable?}  The answer is ``yes" if $A$ has index 1 (obvious) or 2 \cite[Th.~2]{Becher}.

We return to this question in \S\ref{conj.sec} below.
\end{ques}

\begin{prop}[``Arason-Pfister"] \label{AP}
Suppose that $\As$ is in $I^n$ for some $n \ge 1$ and $\deg A < 2^n$.  If $n \le 4$, then $\s$ is hyperbolic (and $A$ is not a division algebra).
\end{prop}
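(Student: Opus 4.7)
The plan is to extend the classical Arason-Pfister Hauptsatz to algebras with involution by a two-step argument: first use the classical Hauptsatz over the generic splitting field of $A$, and then descend hyperbolicity down to $k$.

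For the first step, base change to $k_A$, where $A$ becomes split. Then $\sigma_{k_A}$ is adjoint to the quadratic form $q_\sigma$ of dimension $\deg A < 2^n$, and by hypothesis $q_\sigma \in I^n k_A$. The classical Arason-Pfister Hauptsatz for quadratic forms therefore forces $q_\sigma$ to be Witt-trivial over $k_A$; equivalently, $\sigma$ becomes hyperbolic after base change to $k_A$.

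The real content lies in the second step: showing that hyperbolicity of $\sigma$ over $k_A$ descends to hyperbolicity over $k$ when $\deg A < 16$. For $n \le 3$ one has $\deg A \le 6$, and in that range the descent follows from the standard hyperbolicity results for orthogonal involutions on algebras of small degree. The delicate case is $n = 4$, where $\deg A$ can be as large as $14$: here I would invoke the index-reduction result that Zainoulline proves in Appendix~\ref{kirill.sec} for groups of type $D$, applied to a suitable variety of totally isotropic right ideals in $(A, \sigma)$. This descent is the main obstacle of the proof; the classical Hauptsatz step and the small-$n$ descent are both routine in comparison.

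Finally, once $\sigma$ is known to be hyperbolic over $k$, the algebra $A$ contains an idempotent of rank $(\deg A)/2 > 0$ satisfying $\sigma(e) = 1-e$. Such an idempotent is a nontrivial zero divisor, so $A$ cannot be a division algebra, giving the parenthetical second conclusion for free.
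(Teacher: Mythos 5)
Your overall strategy (Hauptsatz over $k_A$, then descend hyperbolicity to $k$) matches the paper, and your final paragraph on ``not a division algebra'' is correct. But the descent step, which you correctly identify as the content of the proposition, is exactly where your argument has a gap.

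The appendix result of Zainoulline (Prop.~\ref{kirill}) carries the hypothesis that $\deg A/\ind A$ is \emph{odd}, and it is a non-hyperbolicity statement over the function field of the Severi--Brauer variety of $A$ --- not an index-reduction theorem for a variety of totally isotropic ideals as you describe. It therefore covers only part of the relevant case analysis. Since $A$ carries an orthogonal involution, $\ind A$ is a power of $2$, and for $n=4$ the case split after Step 1 is: index $1$ (trivial), index $2$ (handled by \cite[Prop.~3.3]{PSS:herm}, a result you never invoke), $\deg A/\ind A$ odd (Zainoulline), and the single leftover case $\deg A = 8$, $\ind A = 4$, where $\deg A/\ind A = 2$ is even. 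Your proposal supplies nothing for this last case, and it cannot be reached by the tools you cite. The paper handles it with a separate argument: since $\As$ is in $I^3$, one component of the even Clifford algebra is split and adjoint to an $8$-dimensional quadratic form $\phi$; hyperbolicity of $\sigma$ over $k_A$ makes $\phi$ hyperbolic over $k_A$; since $A$ is Brauer-equivalent to the full Clifford algebra of $\phi$, a theorem of Laghribi \cite[Th.~4]{Lag:Duke} gives that $\phi$ is isotropic over $k$ itself, and then \cite[1.1]{G:clif} gives hyperbolicity of $\sigma$ over $k$. Without this (or an equivalent) argument, the proof is incomplete. Separately, the claim that ``$n \le 3$ follows from standard hyperbolicity results for small degree'' sweeps under the rug exactly the two ingredients (index-$2$ descent via PSS and the appendix for odd $\deg/\ind$) that the paper uses uniformly across all $n$.
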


\begin{proof}
The case where $A$ has index 1 is the Arason-Pfister Hauptsatz.  Otherwise, the Hauptsatz implies that $\s$ is hyperbolic over $k_A$.  If $A$ has index 2 we are done by \cite[Prop.~3.3]{PSS:herm}, and if $\deg A / \ind A$ is odd we are done by Prop.~\ref{kirill}.

The remaining case is where $A$ has degree 8 and index 4.  As $\As$ is in $I^3$, one component of its even Clifford algebra is split and endowed with an orthogonal involution adjoint to an 8-dimensional quadratic form $\phi$ with trivial discriminant.  The involution $\s$ is hyperbolic over $k_A$, hence $\phi$ is also hyperbolic over $k_A$.  As $A$ is Brauer-equivalent to the full Clifford algebra of $\phi$ \cite[\S42]{KMRT}, $\phi$ is isotropic over the base field $k$ by \cite[Th.~4]{Lag:Duke}.  It follows that $\s$ is hyperbolic over $k$ \cite[1.1]{G:clif}.
\end{proof}

The algebras of degree 8 in $I^3$ are completely decomposable by \cite[42.11]{KMRT}.  For degree 10, we have the following nice observation pointed out to us by Jean-Pierre Tignol:
\begin{lem} \label{deg10}
If $\As$ is in $I^3$ and $\deg A \equiv 2 \bmod{4}$, then $A$ is split.
\end{lem}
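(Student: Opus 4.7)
The plan is to exploit the interplay between the $I^3$ hypothesis, which constrains the Brauer classes of the two Clifford components, and the parity $\deg A \equiv 2 \pmod 4$, which forces a ``fundamental relation'' on those classes.

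Write $\deg A = 2m$ with $m$ odd.  Since $\As \in I^3 \subset I^2$, Example~\ref{In.egs}(2) shows that $\s$ has trivial discriminant, hence the even Clifford algebra splits as $C_0\As = C^+ \times C^-$ into central simple $k$-algebras of degree $2^{m-1}$.  Example~\ref{In.egs}(3) then translates the $I^3$ hypothesis into the assertion that $\{[C^+],[C^-]\} = \{[A],0\}$ as a multiset in $\operatorname{Br}(k)$; in particular $[C^+]$ and $[C^-]$ both lie in $\{0,[A]\}$, a set of $2$-torsion classes, since $[A]$ is $2$-torsion (as $A$ carries an involution of the first kind).

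The key structural input I would invoke is the parity form of the fundamental relation for an orthogonal involution of trivial discriminant on an algebra of degree $2m$: when $m$ is odd, the center of $\operatorname{Spin}\As$ is cyclic of order $4$, its two half-spin characters are inverse to one another, and additivity of Tits algebras gives
$$[C^-] \;=\; -[C^+] \quad \text{in } \operatorname{Br}(k).$$
This is standard, in the style of \cite[\S9]{KMRT}.  Combined with the $2$-torsion of $[C^+]$, this forces $[C^+] = [C^-]$, so the multiset $\{[C^+],[C^-]\}$ has only one distinct entry; equating with $\{[A],0\}$ gives $[A] = 0$, i.e., $A$ is split.

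The main obstacle is only pinning down the right form of the parity relation; once that is cited, the conclusion is a one-line multiset comparison.  The parity hypothesis on $\deg A$ is essential: for $m$ even the analogous identity reads $[A] = [C^+] + [C^-]$, which is already compatible with $\{[C^+],[C^-]\} = \{[A],0\}$ and so yields no obstruction --- consistent with the existence of $\As \in I^3$ with $A$ non-split in degrees $\equiv 0 \pmod 4$ that features elsewhere in the paper.
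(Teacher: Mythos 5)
Your proof is correct and takes essentially the same approach as the paper: both cite the fundamental relations for the Clifford algebra from \cite[\S9]{KMRT} in the $\deg A \equiv 2 \bmod 4$ case and combine them with the $I^3$ constraint $\{[C^+],[C^-]\}=\{[A],0\}$. The only cosmetic difference is that you use the relation $[C^+]+[C^-]=0$ and deduce $[C^+]=[C^-]$, while the paper uses the companion relation $2[C^\pm]=[A]$ to conclude directly; both follow from the center of $\Spin\As$ being $\mmu4$ when $\deg A/2$ is odd.
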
 

In particular, if $\As$ is of degree 10 and in $I^3$, then $A$ is split (by the lemma), hence $\s$ is isotropic by Pfister, see \cite[XII.2.8]{Lam} or \cite[17.8]{G:lens}.

\begin{proof}[Proof of Lemma \ref{deg10}]
Because the degree of $A$ is congruent to 2 mod 4, the Brauer class $\gamma$ of either component of the even Clifford algebra of $\As$ satisfies $2 \gamma = [A]$, see \cite[9.15]{KMRT}.  But $\As$ belongs to $I^3$, so $\gamma$ is 0 or $[A]$.  Hence $[A] = 0$.
\end{proof}

Algebras $\As$ in $I^3$ of degree 12 are described in \cite{GQ12}.  

For $\As$  in $I^3$ of degree 14, the algebra $A$ is split by Lemma \ref{deg10}, hence $\s$ is adjoint to a quadratic form in $I^3$.  These forms have been described by Rost, see \cite{Rost:14.1} or \cite[17.8]{G:lens}.

For $\As$ in $I^3$ and of degree $\ge 16$, the main question to ask is: \emph{How to tell if $\As$ is in $I^4$?}  We address that question in Cor.~\ref{I4.cor} below.

\begin{rmk}[\IH]
In addition to the generically Pfister and completely decomposable algebras with involution, another interesting class of involution are the so-called \emph{\IH} involutions.  We say that a central simple $k$-algebra $A$ with orthogonal involution $\s$ \emph{has \IH} if the degree of $A$ is $2^n$ for some $n \ge 1$, and for every extension $K/k$ over which $\s$ is isotropic, the involution $\s$ is actually $K$-hyperbolic.  If $\As$ has \IH, then $\As$ is generically Pfister, see \cite{BPQ}.  Conversely, if $n \le 4$ and $\As$ is generically Pfister, then $\As$ has \IH\ by the arguments in the proof of Prop.~\ref{AP}.
\end{rmk}

\section{Extending the Arason invariant} \label{arason.sec}

In this section, $\As$ denotes a central simple algebra in $I^3$ over a field $k$.  We use the notation $k_A$ and $q_\s$ from Def.~\ref{In.def}.

In some cases, we can define an element
\begin{equation} \label{e3.1}
e_3\As \in H^3(k, \Zm4) / E(A) 
\end{equation}
for $E(A) := \ker(H^3(k, \Zm4) \ra H^3(k_A, \Zm4))$, such that
\begin{equation} \label{e3.3}
\parbox{4in}{If $K/k$ splits $A$, then $e_3\As$ is the Arason invariant $e_3(q_{\s \ot K})$ in $H^3(k, \Zm4)$.}
\end{equation}
and
\begin{equation} \label{e3.2}
\res_{K/k} e_3 \As = e_3 \left[ \As \ot K \right] \quad \text{for every extension $K/k$.}
\end{equation}

Clearly, properties \eqref{e3.1} and \eqref{e3.3} uniquely determine $e_3\As$ \emph{if such an element exists}.  The existence is a triviality in case $A$ is split.  If $A$ has index 2, then the element $e_3\left[ \As \ot k_A \right] \in H^3(k_A, \Zm4)$ is unramified \cite[Prop.~9]{Berhuy:quat} and so descends to define an element $e_3\As$ as above by \cite[Prop.~A.1]{KRS}.  However, an element $e_3\As$ need not exist if $A$ has degree 8 and is division as Th.~3.9 in \cite{BPQ} shows.  We have:
\begin{thm} \label{e3.thm}
Suppose $\ind A \le 2$ or $2 \ind A$ divides $\deg A$ or $\deg A = 16$.  Then there exists an $e_3\As$ as in \eqref{e3.1} and \eqref{e3.3}.
\end{thm}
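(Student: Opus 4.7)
The invariant $e_3(A,\sigma)$ is uniquely determined by \eqref{e3.1} and \eqref{e3.3}, so it suffices to establish existence. The split case ($e_3 = 0$) and the case $\ind A = 2$ are handled in the paragraph preceding the statement, via the common strategy of showing that the Arason class
\[
\alpha := e_3(q_\sigma) \in H^3(k_A, \Zm4)
\]
is unramified over $k_A$ at all discrete valuations trivial on $k$, whereupon \cite[Prop.~A.1]{KRS} delivers a descended class in $H^3(k, \Zm4)/E(A)$. The plan is to establish this unramifiedness in the remaining cases (a) $2\ind A \mid \deg A$ with $\ind A \ge 4$ and (b) $\deg A = \ind A = 16$.

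For case (a), my plan is an induction on $\deg A$. Writing $A \cong M_m(D)$ with $D$ the division algebra Brauer-equivalent to $A$ and $m$ even, $\sigma$ is adjoint to an $\varepsilon$-Hermitian form $h$ over $D$. If $h$ has nontrivial Witt index, passing to the anisotropic kernel yields a Witt-equivalent companion $(A',\sigma')$ of strictly smaller degree, still in $I^3$ by Example~\ref{In.egs}, and with the same Brauer class. Since $E(A)$ depends only on the Brauer class $[A]_2$, we have $E(A) = E(A')$, and the inductive hypothesis produces $e_3(A',\sigma') \in H^3(k, \Zm4)/E(A)$; one defines $e_3(A,\sigma)$ to be this class, and \eqref{e3.3} follows from the equality $q_\sigma = q_{\sigma'}$ in the Witt ring over any common splitting field. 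If instead $h$ is anisotropic, I would appeal to structural results: for $\deg A = 8$ the algebra is completely decomposable by \cite[42.11]{KMRT} and $e_3$ is given by an explicit triple cup product, while for $\deg A = 16$ with $\ind A \in \{4,8\}$ one passes to a generic index-reduction extension $K/k$ to reduce either to the index-$2$ case or to a completely decomposable degree-$8$ case over $K$.

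Case (b) is the principal technical obstacle, since $A$ is already a division algebra and no Witt reduction is available. My plan is to lift to the function field $K$ of a generalized Severi-Brauer variety of $A$ chosen so that $\ind(A \otimes K) \le 8$, apply case (a) over $K$ to obtain $e_3(A \otimes K, \sigma \otimes K) \in H^3(K, \Zm4)/E(A_K)$, and then descend to $H^3(k, \Zm4)/E(A)$ by a direct residue analysis of $\alpha$ along the codimension-one points of the Severi-Brauer variety of $A$. The residue computation, which will exploit the specific structure of degree-$16$ division algebras in $I^3$, is where the substantive work of the proof lies; it is the hardest step and the most likely place for a conceptual input beyond the Berhuy/Kahn-Rost-Sujatha framework.
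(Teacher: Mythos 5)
Your proposal takes a genuinely different route from the paper, and the route has a real gap at exactly the point you flag as hardest. The paper does not descend an Arason class from $k_A$ at all. For the case $2\ind A \mid \deg A$, it defines $\ehyp\As$ directly as the Rost invariant $r_{\Spin(A,\hyp)}(\eta')$ of a class $\eta' \in H^1(k, \Spin(A,\hyp))$ lifting $\Spin\As$; well-definedness modulo $[A]\cdot H^1(k,\mmu2)$ comes from Lemma~\ref{hinv.lem} together with the computation in \cite{MPT} of the Rost invariant on classes from the center. For $\deg A = 16$, the new idea is the inclusion $\hspin_{16} \subset E_8$: one lifts $\As$ to $\eta \in H^1(k, \hspin_{16})$ via Lemma~\ref{class} and sets $\esix\As := r_{E_8}(\eta)$, a class that lives in $H^3(k, \QZt)$ \emph{a priori}, so no descent is required; well-definedness (Theorem~\ref{welldef}) is a twisting argument using \ref{ctr.rinv}. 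Property \eqref{e3.3} then drops out of Rost-multiplier computations (Examples~\ref{e3.eg}, \ref{pfeg}).

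Your unramified-descent plan is precisely the framework the paper is working to avoid, and the residue analysis you defer is not a routine loose end — it is where the approach may fail outright. For index $\ge 8$ there is no general mechanism guaranteeing that an unramified class on $SB(A)$ lies in the image of $H^3(k,\Zm4)$ (and the footnote in \S\ref{arason.sec} already flags that $E(A)$ behaves differently in this range, citing Peyre and Karpenko); \cite[Prop.~A.1]{KRS} is specific to conics. Moreover, your case~(a) induction is not actually closed: the anisotropic degree-$16$ algebras with $\ind A \in \{4,8\}$ are not genuine base cases, since your proposed detour through an index-reduction extension $K$ reintroduces the very same descent-from-$K$-to-$k$ problem. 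So the ``substantive work'' you identify is not filling in a computation but supplying a new construction, and the paper's answer to that is the $E_8$ Rost invariant. It would be worth making this contrast explicit if you pursue the descent route, since a successful residue argument would give information (about unramified $H^3$ of $SB(A)$ for $\ind A \ge 4$) that the paper's method does not.
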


To illustrate the cases covered by the proposition, we note that for $A$ of even degree between 8 and 16, the only omitted cases are where $A$ has degree 8 and index 8 or $A$ has degree 12 and index 4.  These cases are genuinely forbidden by \cite{BPQ} and the following example, which extends slightly the reasoning in \cite{BPQ}.

\begin{eg}
Fix a field $k_0$ and an algebra with orthogonal involution $\As$ in $I^3$ over $k_0$, where $A$ has degree 12 and index 4.  By extending scalars to various function fields of quadrics as in \cite{M:simple}, we can construct an extension $k/k_0$ such that $H^3(k, \Zm4)$ is zero and $A \ot k$ still has index 4.  We claim that there is no element $e_3\As$ satisfying \eqref{e3.1} and \eqref{e3.3}.  Indeed, by \eqref{e3.1}, such an $e_3\As$ would be zero.  By \eqref{e3.3}, over the function field of the Severi-Brauer variety of $A$ over $k$, the involution $\s$ becomes hyperbolic.  But this is impossible by Prop.~\ref{kirill}.
\end{eg}

By adding hyperbolic planes, this example and the one from \cite{BPQ} show that there exist $\As \in I^3$ 
\begin{itemize}
\item of index 8 and degree $8 + 16\ell$
\item of index 4 and degree $12 + 8\ell$
\end{itemize}
for all $\ell \ge 0$ such that no element $e_3\As$ satisfies \eqref{e3.1}--\eqref{e3.3}.  Clearly, there are some difficulties for every degree congruent to 4, 8, or 12 mod 16.

\smallskip
As for the proof of Th.~\ref{e3.thm}, the case of index $\le 2$ was treated in \cite{Berhuy:quat}, as outlined above.  In the remaining two cases, we define invariants $\ehyp$ and $\esix$ in \S\ref{e3p.sec} and \S\ref{e3.sec} respectively that take values in $H^3(k, \Zm4) / [A] \cdot H^1(k, \mmu2)$.  Clearly, $[A]\cdot H^1(k, \mmu2)$ is contained in $E(A)$,\footnote{This inclusion is proper for some algebras $A$ of index $\ge 8$ by \cite{Peyre:deg3} and \cite[5.1]{Karp:cod2}.} and 
we define $e_3\As$ to be the image of $\ehyp\As$ or $\esix\As$ in $H^3(k, \Zm4) / E(A)$.  Property \eqref{e3.3} is proved in Examples \ref{e3.eg} and \ref{pfeg} below.  The following corollaries are obvious:

\begin{cor}\label{I4.cor}
An algebra with involution $\As$ as in Th.~\ref{e3.thm} belongs to $I^4$ if and only if $e_3\As$ is zero.$\hfill\qed$
\end{cor}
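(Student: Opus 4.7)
The plan is to reduce the statement to the classical theorem of Arason (together with the Merkurjev--Suslin identification of $I^3/I^4$ with degree-3 cohomology) that for a quadratic form $q \in I^3 k$, one has $q \in I^4 k$ if and only if its Arason invariant $e_3(q) \in H^3(k, \Zm4)$ vanishes. Since property \eqref{e3.3} in the construction of $e_3\As$ is phrased precisely in terms of this classical invariant after passage to a splitting field, the corollary should reduce to a short chain of equivalences routed through the generic splitting field $k_A$ of $A$.

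First I would unwind the definition: by Def.~\ref{In.def}, $\As$ lies in $I^4$ if and only if $q_\s$ lies in $I^4 k_A$, and by the classical result just quoted this is in turn equivalent to the vanishing of $e_3(q_\s)$ in $H^3(k_A, \Zm4)$. Next I would invoke \eqref{e3.3}: applied to the splitting extension $K = k_A$, it says that the restriction of $e_3\As$ to $k_A$ equals $e_3(q_\s)$.

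Finally, since by construction $E(A) = \ker\bigl(H^3(k, \Zm4) \to H^3(k_A, \Zm4)\bigr)$, the restriction map induces an injection
\[
H^3(k, \Zm4) / E(A) \hookrightarrow H^3(k_A, \Zm4).
\]
Hence $e_3\As$ is zero in the quotient $H^3(k, \Zm4)/E(A)$ if and only if its image $e_3(q_\s)$ vanishes in $H^3(k_A, \Zm4)$. Chaining together the three equivalences above yields the corollary.

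There is essentially no obstacle here: all of the substantive work has already gone into Theorem~\ref{e3.thm} (existence of $e_3\As$ satisfying \eqref{e3.3}) and into the classical Arason/Merkurjev--Suslin theorem that $e_3$ detects membership in $I^4$ for quadratic forms over a field. The author's labeling of the corollary as ``obvious" is accurate; the proof really is just a formal consequence of how $e_3\As$ was defined, exploiting the fact that $E(A)$ was chosen to be exactly the right kernel to make the quotient embed into cohomology over $k_A$.
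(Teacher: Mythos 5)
Your proof is correct and is exactly the unwinding of definitions the paper has in mind when it calls the corollary ``obvious'': chain together $\As \in I^4 \iff q_\s \in I^4 k_A \iff e_3(q_\s) = 0$ in $H^3(k_A, \Zm4)$ (Arason plus Merkurjev--Suslin), and then use that $E(A)$ was defined to be precisely the kernel of restriction to $k_A$, so $H^3(k,\Zm4)/E(A)$ injects into $H^3(k_A,\Zm4)$ and $e_3\As$ maps to $e_3(q_\s)$ by \eqref{e3.2}--\eqref{e3.3}. Nothing to add; this matches the paper's implicit proof.
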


\begin{cor} \label{pfinv}
An algebra $\As \in I^3$ of degree $16$ is generically Pfister if and only if $e_3\As$ is zero.$\hfill\qed$
\end{cor}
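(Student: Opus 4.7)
The proof is essentially a two-line deduction combining two already-available facts, so my plan is just to chain them together cleanly.

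First, I would reduce ``generically Pfister'' to ``in $I^4$''. Since $A$ has degree $16 = 2^4$, the quadratic form $q_\s$ over $k_A$ has dimension $2^4$, so $q_\s$ being a Pfister form is the same as $q_\s$ being a $4$-Pfister form. Thus $\As$ is generically Pfister precisely when it is generically $4$-Pfister. By Example~\ref{In.egs}\eqref{pf.deg} applied with $n=4$, this is equivalent to $\As$ lying in $I^4 k$.

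Next I would invoke the invariant. A degree $16$ algebra with involution in $I^3$ satisfies the hypothesis of Theorem~\ref{e3.thm} (the third alternative, $\deg A = 16$), so the element $e_3\As \in H^3(k,\Zm4)/E(A)$ is defined. Corollary~\ref{I4.cor} then says that $\As$ belongs to $I^4$ if and only if $e_3\As = 0$.

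Combining the two equivalences yields the claim. There is no real obstacle here: all the substantive work was done in Theorem~\ref{e3.thm} (construction of $e_3$) and in establishing the equivalence ``in $I^n$ $\Longleftrightarrow$ generically Pfister'' for $\deg A = 2^n$ from \cite[X.5.6]{Lam}. The corollary is just the specialization $n=4$ packaged together with the availability of $e_3$ in the degree-$16$ case.
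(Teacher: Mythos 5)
Your proof is correct and is exactly the intended deduction: the paper presents this corollary with \(\hfill\qed\) as an immediate consequence, precisely by combining Example~\ref{In.egs}\eqref{pf.deg} (with \(n=4\)) to translate ``generically Pfister'' into ``in \(I^4\)'', and then Corollary~\ref{I4.cor}. No gaps.
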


\section{Invariant $\ehyp\As$} \label{e3p.sec}

Suppose that $\As$ is in $I^3$, and 2 times the index of $A$ divides the degree of $A$, i.e., there is a hyperbolic (orthogonal) involution ``$\hyp$" defined on $A$.  We now define an element $\ehyp\As \in H^3(k, \Zm4) / [A] \cdot H^1(k, \mmu2)$ that agrees with the Arason invariant of $\As$ in case $A$ is split.

We may assume that $A$ has degree $\ge 8$; otherwise, $\s$ is hyperbolic by Prop.~\ref{AP} and we set $\ehyp\As = 0$.  We assume further that 4 divides the degree of $A$; otherwise, $A$ has index at most 2 and we set $\ehyp\As$ to be the invariant defined by Berhuy.  These two assumptions imply that $\Spin(A, \hyp)$ is a simple algebraic group of type $D_\ell$ for $\ell$ even and $\ge 4$.

Put $Z$ for the center of $\Spin(A, \hyp)$; it is isomorphic to $\mmu2 \times \mmu2$.  

\begin{lem} \label{hinv.lem}
The sequence
\[
\begin{CD}
H^1(k, Z) @>>> H^1(k, \Spin(A, \hyp)) @>q>> H^1(k, \aut(\Spin(A, \hyp)))
\end{CD}
\]
is exact, and the fibers of $q$ are the $H^1(k, Z)$-orbits in $H^1(k, \Spin(A, \hyp))$.
\end{lem}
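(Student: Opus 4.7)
My plan is to reduce the lemma to standard non-abelian Galois cohomology for a central isogeny together with a small geometric input from the hyperbolicity of $\hyp$. Writing $G = \Spin(A, \hyp)$, I use the two short exact sequences
\[
1 \to Z \to G \to G/Z \to 1 \quad\text{and}\quad 1 \to G/Z \to \aut(G) \to \text{Out}(G) \to 1,
\]
the map $G \to \aut(G)$ being the composition of the outer maps.  From the first sequence, the standard long exact sequence for a central isogeny yields exactness of $H^1(k, Z) \to H^1(k, G) \to H^1(k, G/Z)$ and identifies the $H^1(k, Z)$-orbits on $H^1(k, G)$ with the fibers of $H^1(k, G) \to H^1(k, G/Z)$.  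The lemma therefore reduces to showing that $H^1(k, G/Z) \to H^1(k, \aut(G))$ has trivial fiber over every class in the image of $q$.

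For the base point, the long exact sequence attached to the second short exact sequence tells me the relevant kernel is the image of the connecting map $\text{Out}(G)(k) \to H^1(k, G/Z)$, which vanishes as soon as $\aut(G)(k) \to \text{Out}(G)(k)$ is surjective.  For hyperbolic $(A, \hyp)$ this is elementary: a $k$-rational improper similitude exists (in the split case any reflection in an anisotropic vector works, and the general hyperbolic case follows by Morita-equivalently reducing to the split case).  For non-base-point fibers I invoke the usual twisting principle: the fiber of $q$ over $q(\xi)$ is in bijection with the kernel of the twisted map $H^1(k, G_\xi/Z) \to H^1(k, \aut(G_\xi))$, where $G_\xi$ denotes the inner twist by (the image of) $\xi$ in $H^1(k, G/Z)$.

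The crucial ingredient is that $\xi$ lifts from $H^1(k, G/Z)$ to $H^1(k, G)$, so by exactness its connecting image in $H^2(k, Z) = H^2(k, \mmu2)^2$ vanishes, forcing all the associated Brauer classes (both half-spin components and $A_\xi$ itself) to be trivial.  Hence $G_\xi = \Spin(q_\xi)$ for some quadratic form $q_\xi \in I^3 k$ on a $k$-vector space, and a reflection in any anisotropic vector of $q_\xi$ supplies the required $k$-rational improper similitude to rerun the base-point argument in the twisted setting.  The main obstacle is precisely this Brauer-class bookkeeping: an arbitrary inner twist of a hyperbolic involution need not be hyperbolic and so need not admit $k$-rational improper similitudes, but the very fact that we are looking at classes in the image of $H^1(k, G)$ kills the obstruction and reverts us to the split case where reflections are abundant.
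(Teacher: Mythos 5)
Your overall strategy — factoring $q$ through $H^1(k, G/Z)$ and then analyzing the fiber of the second map using the exact sequence $1 \to G/Z \to \aut(G) \to \mathrm{Out}(G) \to 1$ — is structurally the same as the paper's (whose $\aut(G)^\circ$ is your $G/Z$, and whose $\aut(\D)$ is your $\mathrm{Out}(G)$). The gap is in the key claim you make for both the base point and its twists.

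You assert that $(A, \hyp)$ always admits a $k$-rational improper similitude, ``in the general hyperbolic case by Morita-equivalently reducing to the split case.'' This is false when $A$ is not split. An improper similitude $g$ of $(A, \tau)$ induces, via $\Int(g)$ on $C_0(A,\tau)$, an isomorphism $C_+(A,\tau) \cong C_-(A,\tau)$; since $\hyp \in I^3$ one may arrange $[C_+] = 0$ and $[C_-] = [A]$, so a $k$-rational improper similitude exists only if $[A] = 0$. Morita equivalence does not change the Brauer class and cannot bridge this. Thus $\aut(G)(k) \to \mathrm{Out}(G)(k)$ is \emph{not} surjective for nonsplit $A$, and the connecting map $\mathrm{Out}(G)(k) \to H^1(k, G/Z)$ has a genuinely nontrivial image class $\gamma_0$. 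Your later claim is likewise wrong: that $\delta(\bar\xi)=0$ in $H^2(k,Z)$ ``forces all the associated Brauer classes to be trivial.'' In fact $\delta(\bar\xi)=0$ only says the Tits class of $G_{\bar\xi}$ equals that of $G$, namely $(0,[A])$, not that it vanishes; so $A_\xi$ remains Brauer-equivalent to $A$ and is not split when $A$ is not.

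What saves the statement — and what your proof needs — is not that $\ker(r)$ is trivial, but that the extra element $\gamma_0 \in \ker(r)$ (when $A$ is nonsplit) does \emph{not} lie in the image of $H^1(k, G) \to H^1(k, G/Z)$. Indeed, twisting by $\gamma_0$ swaps $C_+$ and $C_-$, so its connecting image is essentially $([A],[A]) \in H^2(k, \mmu2)^2$, which is nonzero. The paper's argument is the mirror image of this: the diagram automorphism $\pi$ obtained from a class killed by $q$ must \emph{fix} the Tits class $(0,[A])$, and a nontrivial $\pi$ swaps the two entries, so $\pi\ne 1$ forces $[A]=0$; when $\pi = 1$ the argument is trivial. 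You need one of these dichotomy arguments; as written, your proof silently assumes you are in the split case.
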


If one replaces $\aut(\Spin(A, \hyp))$ with its identity component, then the lemma is obviously true.

\begin{proof}[Sketch of proof of Lemma \ref{hinv.lem}]
Given a 1-cocycle with values in $\Spin(A, \hyp)$, we write $G$ for the group $\Spin(A, \hyp)$ twisted by the 1-cocycle.  The center of $G$ is canonically identified with $Z$, and we want to show that the sequence
\begin{equation} \label{hinv.1}
\begin{CD}
H^1(k, Z) @>>> H^1(k, G) @>q>> H^1(k, \aut(G))
\end{CD}
\end{equation}
is exact.  

Suppose that $\ghat \in H^1(k, G)$ is killed by $q$, i.e., the twisted group $G_{q(\ghat)}$ is isomorphic to $G$.  By the exactness of the sequence
\[
\begin{CD}
1 @>>> \aut(G)^\circ @>>> \aut(G) @>>> \aut(\D) @>>> 1
\end{CD}
\]
for $\D$ the Dynkin diagram of $G$ \cite[\S16.3]{Sp:LAG}, the image $\gamma$ of $\ghat$ in $H^1(k, \aut(G)^\circ)$ is also the image of some $\pi \in \aut(\D)(k)$.  The element $\pi$ acts on $Z$, hence on $H^2(k, Z)$, and since $G_{q(\ghat)}$ is isomorphic to $G$,
\begin{equation} \label{pifix}
\parbox{4in}{The automorphism $\pi$ fixes the Tits class of $G$ in $H^2(k, Z)$.}
\end{equation}

We now show that $\pi$ is in the image of $\aut(G)(k)$; we assume that $\pi \ne 1$.  We write $G$ as $\Spin(A, \tau)$ for some $(A, \tau)$ in  $I^3$.  The even Clifford algebra of $(A, \tau)$ is Brauer-equivalent to $A \times k$.  If $\ell \ne 4$, then \eqref{pifix} implies that $A$ is isomorphic to $M_{2\ell}(k)$ \cite[p.~379]{KMRT}, i.e., $A$ is split, and a hyperplane reflection in $\aut(G)(k)$ maps to $\pi$.  If $\ell = 4$, similar reasoning applies.

Since $\pi$ is in the image of $\aut(G)(k)$, the element $\gamma \in H^1(k, \aut(G)^\circ)$ is zero, hence $\ghat$ comes from $H^1(k, Z)$.  This proves that \eqref{hinv.1} is exact.
\end{proof}

There is a class $\eta' \in H^1(k, \Spin(A, \hyp))$ that maps to the class of $\Spin\As$ in $H^1(k, \aut(G))$, and Lemma \ref{hinv.lem} says that $\eta'$ is determined up to the action of $H^1(k, Z)$.

We put:
\begin{equation} \label{e3.predef}
\ehyp\As := r_{\Spin(A, \hyp)}(\eta') \quad  \in \frac{H^3(k, \Zm4)}{[A] \cp H^1(k, \mmu2)}\ ,
\end{equation}
where $r$ denotes the Rost invariant.
Note that $\ehyp\As$ is well defined by the main result of \cite{MPT}.

\begin{eg} \label{e3.eg}
If $A$ is split, then $\s$ is adjoint to a quadratic form $q_\s$ and $\ehyp\As$ is the Arason invariant of $q_\s$ by \cite[p.~432]{KMRT}.
\end{eg}

\section{$I^3$ and $D_{2n}$}

Write $\Spin_{4n}$ for the split simply connected group of type $D_{2n}$.  Its center is $\mmu2 \times \mmu2$.  Up to isomorphism, $\Spin_{4n}$ has four quotients: itself, $\SO_{4n}$, the adjoint group $\PSO_{4n}$, and one other that we call a half-spin group\footnote{Bourbaki writes ``semi-spin" in \cite{Bou:g7}.}  and denote by $\hspin_{4n}$.  We are interested in it because of the following result:

\begin{lem} \label{class}
The image of $H^1(k, \hspin_{4n})$ in $H^1(k, \PSO_{4n})$ classifies pairs $\As$ of degree $4n$ in $I^3$.
\end{lem}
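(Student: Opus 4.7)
The approach is to apply the long exact sequence in Galois cohomology to the central isogeny
\[
1 \to \mmu2 \to \hspin_{4n} \to \PSO_{4n} \to 1.
\]
Here $\hspin_{4n}$ is realised as $\Spin_{4n}/M$, where $M$ is one of the two copies of $\mmu2$ in the center $Z = \mmu2 \times \mmu2$ of $\Spin_{4n}$ that is \emph{not} the kernel of $\Spin_{4n} \to \mathrm{SO}_{4n}$; the $\mmu2$ displayed above is then the complementary copy $Z/M$.

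First I would recall the standard fact \cite{KMRT} that $H^1(k, \PSO_{4n})$ classifies (possibly oriented) pairs $\As$ of degree $4n$ with trivial discriminant, that is, in $I^2$. The long exact sequence then gives
\[
H^1(k, \hspin_{4n}) \to H^1(k, \PSO_{4n}) \xrightarrow{\partial} H^2(k, \mmu2) = \mathrm{Br}(k)[2],
\]
so the image of the left-hand map coincides with the kernel of $\partial$.

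The technical heart is then to identify $\partial([\As])$ with the Brauer class of one of the two components of the even Clifford algebra $C_0\As$. This uses the theory of Tits algebras: the Tits class of the twisted simply connected cover $\Spin\As$ lies in $H^2(k, Z)$, and its values on the three nontrivial characters of $Z$ are precisely $[A]$, $\gamma_+$, and $\gamma_-$, where $\gamma_\pm$ are the Brauer classes of the two components of $C_0\As$, subject to the fundamental Clifford relation $\gamma_+ + \gamma_- = [A]$. By functoriality of the Tits class, $\partial$ is the pushforward along the character $Z \to \mmu2$ dual to $M = \ker(\Spin_{4n} \to \hspin_{4n})$, and a direct check identifies this with one of $\gamma_\pm$, say $\gamma_+$.

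Combining these steps: by Example~\ref{In.egs}(3), an $\As$ in $I^2$ lies in $I^3$ iff $\{\gamma_+, \gamma_-\} = \{0, [A]\}$, which by the Clifford relation reduces to $\gamma_+ \in \{0, [A]\}$. The condition $\gamma_+ = 0$ is precisely $\partial([\As]) = 0$, that is, membership in the image of $H^1(k, \hspin_{4n})$; the condition $\gamma_+ = [A]$ is the analogous condition for the \emph{other} half-spin quotient of $\Spin_{4n}$, which is isomorphic to $\hspin_{4n}$ via the outer automorphism of $D_{2n}$. Up to that identification, the image of $H^1(k, \hspin_{4n})$ in $H^1(k, \PSO_{4n})$ therefore classifies pairs $\As$ of degree $4n$ in $I^3$. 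I expect the main obstacle to be step three: the Tits-algebra bookkeeping needed to match the group-theoretic boundary $\partial$ with the Brauer class of a Clifford component, while keeping track of the orientation data implicit in the classification by $H^1(k, \PSO_{4n})$.
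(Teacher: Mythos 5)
Your approach is essentially the paper's: the paper also works from the central extension $1 \to \mmu2 \to \hspin_{4n} \to \PSO_{4n} \to 1$ (written as the bottom row of a diagram over the $\Spin \to \PSO$ extension), uses the description of $H^1(k, \PSO_{4n})$ as classifying \emph{triples} $(A, \s, \phi)$ with $\phi$ an orientation of $C_0\As$, and identifies the coboundary into $H^2(k, \mmu2)$ with the Brauer class of the $+$ component via the Tits class of $\Spin(A, \s, \phi)$. Up to there the two arguments match in substance.

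The gap is in the last step, which you flag as the "main obstacle" but then wave past with "up to that identification." Because $H^1(k, \PSO_{4n})$ classifies triples, not pairs, the claim that the image of $H^1(k, \hspin_{4n})$ classifies \emph{pairs} requires checking that the forgetful map from the image to the set of isomorphism classes of $\As$ in $I^3$ is a bijection, and surjectivity is the easy half (choose the orientation making $\gamma_+ = 0$). The hard half is injectivity: if $(A, \s, \phi)$ and $(A, \s, \phi')$ with $\phi \neq \phi'$ both lie in the image, they must already be equal in $H^1(k, \PSO_{4n})$. Invoking the outer automorphism of $D_{2n}$ does not settle this: the outer automorphism swaps the two half-spin quotients of $\Spin_{4n}$, so it relates the images of $H^1$ of the two distinct subgroups $\hspin^\pm$ inside $C_0(\Bt)$, which is not the same as showing that the single image of $H^1(k, \hspin_{4n})$ hits each pair exactly once. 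The paper's proof closes this by a case split: if both orientations give classes in the image then both Clifford components are split, so $A$ is split (from $\gamma_+ + \gamma_- = [A]$), and then a hyperplane reflection in $\PGO^+_{4n}(k)$ identifies the two triples; if $A$ is nonsplit this situation simply cannot occur. You need this argument, or something equivalent, to legitimately drop the orientation.
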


The algebras of $\As$ in $I^3$ with degree divisible by 4 are in some sense the most interesting ones.  If the degree is not divisible by 4, then $A$ is split by Lemma \ref{deg10}, and classifying such $\As$ amounts to quadratic form theory.

\begin{proof}[Proof of Lemma \ref{class}]
The proof can be summarized by writing: Combine pages 409 and 379 in \cite{KMRT}.

We identify the groups $\Spin_{4n}$ and $\PSO_{4n}$ with the corresponding groups for the split central simple algebra $B$ of degree $4n$ with hyperbolic orthogonal involution $\tau$.  Fix a labeling $C_+ \times C_-$ for the even Clifford algebra $C_0\Bt$.  Write $\pi_+$ for the projection $C_0\Bt \ra C_+$ and $\hspin_{4n}$ for the image of $\Spin_{4n}$ in $C_+$ under $\pi_+$.

Consider the following commutative diagram with exact rows:
\begin{equation} \label{hspin.diag}
\begin{CD}
1 @>>> \mmu2 \times \mmu2 @>>> \Spin_{4n} @>>> \PSO_{4n} @>>> 1 \\
@. @VV{\pi_+}V @VV{\pi_+}V @| @. \\
1 @>>> \mmu2 @>>> \hspin_{4n} @>>> \PSO_{4n} @>>> 1
\end{CD}
\end{equation}
It induces a commutative diagram with exact rows:
\begin{equation} \label{class.1}
\begin{CD}
H^1(k, \Spin_{4n}) @>>> H^1(k, \PSO_{4n}) @>>> H^2(k, \mmu2 \times \mmu2) \\
@VV{\pi_+}V @| @VV{\pi_+}V \\
H^1(k, \hspin_{4n}) @>>> H^1(k, \PSO_{4n}) @>>> H^2(k, \mmu2)
\end{CD}
\end{equation}

As in \cite[p.~409]{KMRT}, the set $H^1(k, \PSO_{4n})$ classifies triples $(A, \s, \phi)$ where $A$ has degree $4n$, the involution $\s$ is orthogonal with trivial discriminant, and $\phi$ is a $k$-algebra isomorphism $Z(C_0\As) \iso Z(C_+ \times C_-)$.  We view $\phi$ as a labeling of the components of the even Clifford algebra of $\As$ as $+$ and $-$.  The image of such a triple $(A, \s, \phi)$ in $H^2(k, \mmu2 \times \mmu2)$ is the Tits class of $\Spin(A, \s, \phi)$, and it follows from \cite[p.~379]{KMRT} and the commutativity of \eqref{class.1} that the image of $(A, \s, \phi)$ in $H^2(k, \mmu2)$ is $C_+\As$.  So $(A, \s, \phi)$ is in the image of $H^1(k, \hspin_{4n})$ if and only if $C_+\As$ is split.  

We have proved that for every $\As$ of degree $4n$ in $I^3$, there is some triple $(A, \s, \phi)$ in the image of $H^1(k, \hspin_{4n}) \ra H^1(k, \PSO_{4n})$.  Suppose now that $(A, \s, \phi)$ and $(A, \s, \phi')$ are in the image of $H^1(k, \hspin_{4n})$; we will show they are equal; we may assume that $\phi \ne \phi'$.  If $A$ is split, then a hyperplane reflection gives an isomorphism between $(A, \s, \phi)$ and $(A, \s, \phi')$, and we are done.  If $A$ is nonsplit, then $C_-\As$ (with numbering given by $\phi$) is nonsplit and it is impossible that $(A, \s, \phi')$ is in the image of $H^1(k, \hspin_{4n})$.  This concludes the proof.
\end{proof}

\section{$\hspin_{16} \subset E_8$} \label{hse}

Write $E_8$ for the split algebraic group of that type.  We view it as generated by homomorphisms $x_\e \!: \Ga \ra E_8$ as $\e$ varies over the root system $\rtE$ of that type, as in \cite{St}.  The root system $\rtD$ is contained in $\rtE$, as can be seen from the completed Dynkin diagrams.  Symbolically, we can see the inclusion as follows.  Fix sets of simple roots $\delta_1, \ldots, \delta_8$ of $\rtD$ and $\e_1, \ldots, \e_8$ of $\rtE$, numbered as in Figure \ref{dynks.fig}, where the unlabeled vertex denotes the negative $-\tilde{\e}$ of the highest root of $\rtE$.
\begin{figure}[ht]
\[
\begin{picture}(7,2)

    \multiput(0.5,1)(1,0){5}{\line(1,0){1}}
    
    \put(0.5,0.45){\makebox(0,0.4)[b]{$\delta_1$}}
    \put(1.5,0.45){\makebox(0,0.4)[b]{$\delta_2$}}
    \put(2.5,0.45){\makebox(0,0.4)[b]{$\delta_3$}}
    \put(3.5,0.45){\makebox(0,0.4)[b]{$\delta_4$}}
    \put(4.5,0.45){\makebox(0,0.4)[b]{$\delta_5$}}
    \put(5.5,0.45){\makebox(0,0.4)[b]{$\delta_6$}}

    \put(5.5,1){\line(1,-1){0.7}}
    \put(5.5,1){\line(1,1){0.7}}

    \multiput(0.5,1)(1,0){6}{\circle*{\darkrad}}

    \put(6.2,0.3){\circle*{\darkrad}}
    \put(6.2,1.7){\circle*{\darkrad}}
    
    \put(6.1,-0.1){\makebox(0,0.4)[b]{$\delta_7$}}
    \put(5.9,1.7){\makebox(0,0.4)[b]{$\delta_8$}}
    
\end{picture}
\begin{picture}(7,3)
    \multiput(1,1)(1,0){8}{\circle*{\darkrad}}
    \put(3,2){\circle*{\darkrad}}

    \put(1,1){\line(1,0){6}}
    \put(3,2){\line(0,-1){1}}
    \multiput(7.3,1)(0.2,0){3}{\circle*{0.01}}

    \put(1,0.45){\makebox(0,0.4)[b]{$\e_1$}}
    \put(2,0.45){\makebox(0,0.4)[b]{$\e_3$}}
    \put(3,0.45){\makebox(0,0.4)[b]{$\e_4$}}
    \put(4,0.45){\makebox(0,0.4)[b]{$\e_5$}}
    \put(5,0.45){\makebox(0,0.4)[b]{$\e_6$}}
    \put(6,0.45){\makebox(0,0.4)[b]{$\e_7$}}
        \put(7,0.45){\makebox(0,0.4)[b]{$\e_8$}}
    \put(3.4,1.85){\makebox(0,0.4)[b]{$\e_2$}}
    \end{picture} 
\]
\caption{Dynkin diagram of $\rtD$ and extended Dynkin diagram of $\rtE$}    \label{dynks.fig}
\end{figure}
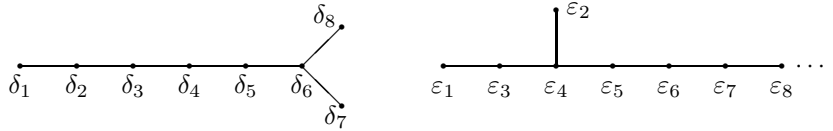
The inclusion of $\rtD$ in $\rtE$ is given by the following table:
\begin{equation} \label{d8e8}
\begin{array}{c|cccccccc}
\text{$\rtD$ root} & \delta_1&\delta_2&\delta_3&\delta_4&\delta_5&\delta_6&\delta_7&\delta_8 \\ \hline
\text{$\rtE$ root} & -\tilde{\e}&\e_8&\e_7&\e_6&\e_5&\e_4&\e_2&\e_3
\end{array}
\end{equation}

The subgroup of $E_8$ generated by the $x_\e$'s for $\e \in \rtD$ is a subgroup of type $D_8$.  This is standard, see e.g.\ \cite{BoSieb}.  Moreover, the subgroup of type $D_8$ is $\hspin_{16}$.  This can be seen by root system computations as in \cite[\S1.7]{Ti:si} or with computations in the centers as in \cite{GQ}.

 It follows from Table \ref{d8e8} (and is asserted in \cite{Dynk:ssub}), that:
\begin{equation} \label{rostmult}
\parbox{4in}{\emph{The composition $\Spin_{16} \ra \hspin_{16} \ra E_8$ has Rost multiplier $1$.}}
\end{equation}

\begin{borel*} \label{ctr.rinv}
Let $\eta \in H^1(k, \hspin_{16})$ map to the class of $\As$ in $H^1(k, \PSO_{16})$, cf.~Lemma \ref{class}.  We write $\hspin\As$ for the group $\hspin_{16}$ twisted by $\eta$; it is the image of $\Spin\As$ in a split component of $C_0\As$.
We find homomorphisms
\[
\Spin\As \ra \hspin\As \ra (E_8)_\eta.
\]
The center of $\hspin\As$ is a copy of $\mmu2$, and we compute the image of an element $c \in \ksq$ under the composition
\[
\ksq = H^1(k, \mmu2) \ra H^1(k, \hspin\As) \ra H^1(k, (E_8)_\eta) \xrightarrow{r_{(E_8)_\eta}} H^3(k, \QZt).
\]
The center of $\Spin\As$ is $\mmu2 \times \mmu2$, and it maps onto the center of $\hspin\As$ via the map $\pi_+$ from the proof of Lemma \ref{class}.  The induced map $H^1(k, \mmu2 \times \mmu2) \ra H^1(k, \mmu2)$ is obviously surjective, so there is some $\gamma \in H^1(k, \mmu2 \times \mmu2)$ such that $\pi_+(\gamma) = (c)$.  By \eqref{rostmult}, we have:
\[
r_{(E_8)_\eta}(c) = r_{\Spin\As} (\gamma),
\]
which is $\pi_+(\gamma) \cdot [A]$ by \cite{MPT}, i.e., $(c) \cdot [A]$.
\end{borel*}

\section{Invariant $\esix\As$ for algebras of degree $16$ in $I^3$} \label{e3.sec}

Let $\As$ be of degree 16 in $I^3$.  Fix a class $\eta \in H^1(k, \hspin_{16})$ that maps to the class of $\As$ in $H^1(k, \PSO_{16})$.  (Here we are using Lemma \ref{class} to know that there is a uniquely determined element in $H^1(k, \PSO_{16})$.)  
Consider the image $r_{E_8}(\eta)$ of $\eta$ under the map
\[
H^1(k, \hspin_{16}) \ra H^1(k, E_8) \xrightarrow{r_{E_8}} H^3(k, \QZt).
\]
Since $\As$ is killed by an extension of $k$ of degree a power of 2, the same is true for $\eta$, hence also for $r_{E_8}(\eta)$.  As $r_{E_8}(\eta)$ is 60-torsion, we conclude that $r_{E_8}(\eta)$ is 4-torsion, i.e., $r_{E_8}(\eta)$ belongs to $H^3(k, \Zm4)$.  We define
\[
\esix\As := r_{E_8}(\eta) \quad \in \frac{H^3(k, \Zm4)}{[A] \cp H^1(k, \mmu2)}
\]

\begin{thm} \label{welldef}
The class $\esix\As$ depends only on $\As$ \emph{(and not on the choice of $\eta$).}
\end{thm}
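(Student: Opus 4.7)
The plan is to compare the Rost invariants $r_{E_8}(\eta)$ and $r_{E_8}(\eta')$ of two lifts $\eta, \eta' \in H^1(k, \hspin_{16})$ of $[\As] \in H^1(k, \PSO_{16})$ and to show that their difference lies in $[A] \cp H^1(k, \mmu2)$.  The idea is that any two such lifts differ by a class coming from the center of $\hspin_{16}$, and the induced correction to the Rost invariant is precisely the quantity already computed in the preceding paragraph.

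From the central extension $1 \to \mmu2 \to \hspin_{16} \to \PSO_{16} \to 1$, the standard non-abelian Galois cohomology sequence exhibits the fibers of $H^1(k, \hspin_{16}) \to H^1(k, \PSO_{16})$ as orbits of the twisted action of $H^1(k, \mmu2) = \ksq$.  Hence there is some $c \in \ksq$ such that $\eta'$ is obtained from $\eta$ by translation by $c$, where $c$ is regarded as a class in $H^1(k, Z(\hspin\As))$ for the twisted group $\hspin\As = (\hspin_{16})_\eta$.

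I then invoke the twisting identity
\[
r_{E_8}(\eta') \;=\; r_{E_8}(\eta) \,+\, r_{(E_8)_\eta}(c),
\]
where on the right $c$ denotes the image of $c$ under the composite $H^1(k, \mmu2) \to H^1(k, \hspin\As) \to H^1(k, (E_8)_\eta)$.  To justify this, I use the surjection $H^1(k, \mmu2 \times \mmu2) \twoheadrightarrow H^1(k, \mmu2)$ recorded in the preceding paragraph to lift $c$ to a class $\gamma \in H^1(k, Z(\Spin\As))$, and reduce to the additivity of the Rost invariant under central translation for the simply-connected group $\Spin_{16}$ (\cite{MPT}), combined with the Rost-multiplier-one statement \eqref{rostmult} for the composite $\Spin_{16} \to E_8$.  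By the very computation of the preceding paragraph, the correction term equals $(c) \cp [A]$, which lies in $[A] \cp H^1(k, \mmu2)$; hence the image $\esix\As$ in the quotient is independent of the choice of lift.

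The main obstacle is the twisting formula for the Rost invariant applied to the non-simply-connected source $\hspin_{16}$, where the usual additivity statement is not immediately available.  The remedy, via the commutative diagram \eqref{hspin.diag} and the surjectivity noted in the preceding paragraph, is to lift through $\Spin_{16}$, where the additivity on central classes from \cite{MPT} applies directly, and then to push forward to $E_8$ using \eqref{rostmult}.
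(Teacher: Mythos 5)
Your overall strategy is the same as the paper's: realize $\eta'$ as a central translate $c\cdot\eta$ of $\eta$, write the difference of Rost invariants as $r_{(E_8)_\eta}(c)$, and compute that via a lift of $c$ to the center of $\Spin\As$ using the result of \cite{MPT} and the Rost-multiplier-one fact \eqref{rostmult}. The paper's proof has exactly this structure (the computation you describe is precisely the content of \ref{ctr.rinv}).

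However, there is a gap in how you justify the ``twisting identity''
\[
r_{E_8}(\eta') = r_{E_8}(\eta) + r_{(E_8)_\eta}(c).
\]
You say you justify this by lifting $c$ to $Z(\Spin\As)$ and appealing to \cite{MPT} plus \eqref{rostmult}, but those two ingredients only compute the \emph{value} $r_{(E_8)_\eta}(c)=(c)\cp[A]$ of the correction term; they do not establish that the difference $r_{E_8}(\eta')-r_{E_8}(\eta)$ \emph{equals} $r_{(E_8)_\eta}(\tau(\eta'))$ in the first place. That equality is the compatibility of the Rost invariant with the twisting bijection $\tau$, and it is a genuinely separate theorem: the paper cites Gille (\cite[p.~76, Lemma~7]{Gille:inv}) for commutativity of the right square in diagram \eqref{twist.diag}. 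Note also that the real point is twisting-compatibility for the Rost invariant of $E_8$, which \emph{is} simply connected, so the obstacle you flag (non-simple-connectedness of the source $\hspin_{16}$) is not where the difficulty lies; what \ref{ctr.rinv} does is lift only the central class $c$, not $\eta$ itself, through $\Spin\As$ — $\eta$ need not lift to $\Spin_{16}$ when $A$ is nonsplit, and no such lift is required. Once you supply the Gille citation for the twisting step, your argument coincides with the paper's.
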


\begin{proof}
Suppose that $\eta, \eta' \in H^1(k, \hspin_{16})$ map to $\As \in H^1(k, \PSO_{16})$.  We consider the image $\tau(\eta')$ of $\eta'$ in the twisted group $H^1(k, (\hspin_{16})_\eta)$.  Since $\tau(\eta')$ maps to zero in $H^1(k, (\PSO_{16})_\eta)$, it is the image of some $\zeta \in H^1(k, \mmu2)$, where $\mmu2$ denotes the center of $(\hspin_{16})_\eta$.  In the diagram
\begin{equation} \label{twist.diag}
\xymatrix{
H^1(k, \hspin_{16}) \ar[r] \ar[d]_{\cong}^\tau & H^1(k, E_8) \ar[r]^{r_{E_8}}\ar[d]_{\cong}^\tau & H^3(k, \QZt)\ar[d]^{? - r_{E_8}(\eta)} \\
H^1(k, (\hspin_{16})_\eta)  \ar[r] & H^1(k, (E_8)_\eta)\ar[r]^{r_{(E_8)_\eta}}& H^3(k, \QZt),
}
\end{equation}
the left box obviously commutes and the right box commutes by \cite[p.~76, Lemma 7]{Gille:inv}.  Commutativity of the diagram and \ref{ctr.rinv} give that
\[
r_{E_8}(\eta') = r_{E_8}(\eta) + \zeta \cdot [A],
\]
as desired.
\end{proof}

\begin{eg} \label{pfeg}
If $A$ is split, then $\s$ is adjoint to a quadratic form $q_\s$, and $\esix\As$ equals the Arason invariant of $q_\s$.  Indeed, if $A$ is split, then there is a class $\gamma \in H^1(k, \Spin_{16})$ that maps to $\eta$.  Statement \eqref{rostmult} gives:
\[
r_{E_8}(\eta) = r_{\Spin_{16}}(\gamma) = e_3(q_\s).
\]
\end{eg}

\part{The invariant $\esix$ on decomposable involutions}

The purpose of this part is to compute $e_3\As$ in case $\As$ can be written as $(Q, \binv) \ot \Cg$ where $(Q, \binv)$ is a quaternion algebra endowed with its canonical involution and $\Cg$ is a central simple algebra of degree 8 with symplectic involution.  We do this by computing the value of the Rost invariant of $E_8$ on a subgroup $\PGL_2 \times \psp_8 \times \mmu2$; this finer computation will be used in \S\ref{const.sec}.

\section{An inclusion $\PGL_2 \times \psp_8 \times \mmu2 \subset \hspin_{16}$} \label{inclusion.sec}

\begin{borel}{Inclusions} \label{inclusions}
We now describe concretely a homomorphism of groups $\PGL_2 \times \psp_8 \ra \hspin_{16}$.  
Write $S_n$ for the $n$-by-$n$ matrix whose only nonzero entries are 1s on the ``second diagonal", i.e., in the $(j, n+1-j)$-entries for various $j$.  We identify $\Sp_{2n}$ with the symplectic group of $M_{2n}(k)$ endowed with the involution $\gamma_{2n}$ defined by 
\[
\gamma_{2n}(x) = \Int \stbtmat{0}{S_n}{-S_n}{0}^{-1} x^t.
\]
We identify $\Spin_{2n}$ with the spin group of $M_{2n}(k)$ endowed with the involution $\s_{2n}$ defined by
\[
\s_{2n}(x) = \Int (S_{2n}) \, x^t.
\]
These are the realizations of the groups (stated on the level of Lie algebras) given in \cite[\S{VIII.13}]{Bou:g7}.

We now define isomorphisms
\begin{equation} \label{two.iso}
(M_2(k), \gamma_2) \ot (M_8(k), \gamma_8) \iso (M_{16}(k), \s'_{16}) \iso (M_{16}(k), \s_{16}),
\end{equation}
where 
\[
\s'_{16}(x) = \Int \left( \begin{smallmatrix}
0 & 0 & 0 & S_4 \\
0 & 0 & -S_4 & 0 \\
0 & -S_4 & 0 & 0 \\
S_4 & 0 & 0 & 0
\end{smallmatrix} \right)
x^t.
\]
We take the first isomorphism to be the usual Kronecker product defined on the standard basis elements by
\[
E_{ij} \ot E_{qr} \mapsto E_{8(i-1) + q, 8(j-1)+r}.
\]
The second isomorphism is conjugation by the matrix
\[
\left( \begin{smallmatrix}
1_4 & 0 & 0 & 0 \\
0 & 0 & 1_4 & 0 \\
0 & -1_4 & 0 & 0 \\
0 & 0 & 0 & 1_4
\end{smallmatrix} \right)
\]
where $1_4$ denotes the 4-by-4 identity matrix.

The homomorphism of groups induces a map on coroot lattices (= root lattices for the dual root systems) that describes the restriction of the group homomorphism to Cartan subalgebras on the level of Lie algebras.  Using the concrete description of the group homomorphism above and the choice of Cartan, etc., from \cite{Bou:g7}, we see that the map on coroots is given by Table \ref{ACD}, where the simple roots of $\SL_2, \Sp_8, \Spin_{16}$, and $E_8$ are labelled $\alpha, \gamma, \delta$, and $\e$ respectively and are numbered as in Figures \ref{dynks.fig} and \ref{dynkC.fig}.  
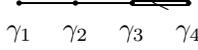
\begin{figure}[ht]
\begin{picture}(5,1.5)
    \put(1,1){\line(1,0){2}} 
    \put(3,1.04){\line(1,0){1}}
    \put(3,0.96){\line(1,0){1}}
 
    \put(1,0.3){\makebox(0,0.5)[b]{$\gamma_1$}}
    \put(2,0.3){\makebox(0,0.5)[b]{$\gamma_2$}}
    \put(3,0.3){\makebox(0,0.5)[b]{$\gamma_3$}}
    \put(4,0.3){\makebox(0,0.5)[b]{$\gamma_4$}}
    
    \put(3.5,0.88){\makebox(0,0.4)[b]{$<$}}



    \multiput(1,1)(1,0){4}{\circle*{\darkrad}}

\end{picture}
\caption{Dynkin diagram of $\rtC$} \label{dynkC.fig}
\end{figure}
For $\SL_2, \Spin_{16}$, and $E_8$, we fix the metric so that roots have length 2, which identifies the coroot and root lattices.  The inclusion of $\hspin_{16}$ in $E_8$ is given by \eqref{d8e8}.\setcounter{table}{\value{figure}}
\begin{table}[ht]
\[
\begin{array}{c@{\ \mapsto\ }c@{\ \mapsto\ }c}
\multicolumn{1}{c}{} & 
\multicolumn{1}{c}{\text{in $D_8$}} & 
\multicolumn{1}{c}{\text{in $E_8$}}\\ \hline
\alpha_1 & \delta_1 + 2 \delta_2 + 3\delta_3 + 4\delta_4 + 3 \delta_5 + 2 \delta_6 + \delta_7 &
-2\e_1 -2\e_2 -4\e_3 -4\e_4 -2\e_5 \\ \hline
\gch_1 & \delta_1 - \delta_7&-2\e_1 -4\e_2 - 4\e_3 - 6\e_4 - 5\e_5 - 4\e_6 -3\e_7 -2\e_8 \\
\gch_2 & \delta_2 - \delta_6 & -\e_4 + \e_8 \\
\gch_3 & \delta_3 -\delta_5 & -\e_5 + \e_7 \\
\gch_4 & \delta_4 + 2\delta_5 + 2\delta_6 + \delta_7 + \delta_8 & \e_2+\e_3 + 2\e_4 + 2\e_5 + \e_6
\end{array} 
\]
\caption{Homomorphisms $\SL_2 \times \Sp_8 \ra \Spin_{16} \ra E_8$ on the level of coroots} \label{ACD}
\end{table}

Either from the explicit tensor product in \eqref{two.iso} or from the description of the center of $\Sp_8$ from \cite[8.5]{GQ}, we deduce inclusions:
\[
(\SL_2 \times \Sp_8)/\mmu2 \subset \Spin_{16} \quad \text{and} \quad \PGL_2 \times \psp_8 \subset \hspin_{16}.
\]
Since the short coroot $\gch_4$ of $\Sp_8$ maps to a (co)root in $\rtD$, the homomorphism $\Sp_8 \ra \Spin_{16}$ has Rost multiplier 1.

(The statements in the previous paragraph can also be deduced from the branching tables in \cite[p.~295]{McKP}, but of course those tables were constructed using data as in Table \ref{ACD}.  To get the statement on Rost multipliers, one uses \cite[7.9]{MG}.)

We find a subgroup $\PGL_2 \times \psp_8 \times \mmu2$ of  $\hspin_{16}$ by taking the center of $\hspin_{16}$ for the copy of $\mmu2$.
\end{borel}

\begin{borel*} \label{dec.inv}
The composition 
\[
H^1(k, \PGL_2 \times \PSp_8 \times \mmu2)  \ra H^1(k, \hspin_{16}) \ra H^1(k, E_8) \xrightarrow{r_{E_8}} H^3(k, \QZt)
\]
defines an invariant of triples $Q, \Cg, c$ where $Q$ is a quaternion algebra, $\Cg$ is a central simple algebra of degree 8 with symplectic involution, and $c$ is in $\kx / k^{\times 2}$.  We abuse notation and write also $r_{E_8}$ for this invariant.

For example, tracing through the proof of Th.~\ref{welldef}, we find:
\begin{equation} \label{rinv.mu2}
r_{E_8}(Q, \Cg, c) = r_{E_8}(Q, (C, \gamma), 1) + (c) \cdot [Q \ot C].
\end{equation}
\end{borel*}

\section{Crux computation}

\begin{lem} \label{rinv.crux}
The composition
\[
H^1(k, \PGL_2) \times 1 \subset H^1(k, \PGL_2) \times H^1(k, \psp_8 \times \mmu2) \ra H^1(k, E_8)
\]
is identically zero.
\end{lem}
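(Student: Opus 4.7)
The plan is to show that our embedding $\PGL_2 \hookrightarrow E_8$ is $E_8(k)$-conjugate to an embedding factoring through an $A_2$-Levi subgroup $\SL_3 \subset E_8$; since $H^1(k,\SL_3)=0$ by Hilbert~90, this forces the induced map on $H^1$ to be zero.

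First, I read off from Table~\ref{ACD} that the lift $\SL_2 \hookrightarrow E_8$ sends the coroot $\alpha_1^\vee$ to
\[
h := -2\e_1 - 2\e_2 - 4\e_3 - 4\e_4 - 2\e_5.
\]
A direct computation with the $E_8$ Cartan matrix gives $\|\beta\|^2 = 2$ for $\beta := \e_1 + \e_2 + 2\e_3 + 2\e_4 + \e_5$, so $\beta$ is a root of $E_8$ and $h = -2\beta$; in particular the Dynkin index of the map is $\|h\|^2/2 = 4$. The same type of calculation gives $\|\e_2+\e_4\|^2 = 2$, so $\e_2+\e_4$ is also a root of $E_8$; the Levi subgroup $L\subset E_8$ attached to the simple roots $\{\e_2,\e_4\}$ has type $A_2$ with derived group $\SL_3$, and the principal embedding $\SL_2\hookrightarrow \SL_3\hookrightarrow E_8$ (via $\operatorname{Sym}^2$, whose kernel is $\mu_2$, so that it factors through $\PGL_2$) sends $\alpha_1^\vee$ to $2(\e_2+\e_4)$.

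Since $E_8$ is simply laced, $W(E_8)$ acts transitively on its roots, so $\beta$ and $\e_2+\e_4$ lie in a common Weyl orbit; using $-1\in W(E_8)$, the Cartan elements $-2\beta$ and $2(\e_2+\e_4)$ are likewise conjugate. As $E_8$ is split over $k$, every Weyl element lifts to $N(T)(k)\subset E_8(k)$, so this is an $E_8(k)$-conjugacy. By the Dynkin classification of $\mathfrak{sl}_2$-subalgebras, where the weighted Dynkin diagram (equivalently the $W$-orbit of the Cartan element) determines an $\mathfrak{sl}_2$-subalgebra up to $E_8$-conjugacy, the two $\SL_2$-subgroups themselves are $E_8(k)$-conjugate. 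Descending to the common quotient $\PGL_2$, our $\PGL_2\hookrightarrow E_8$ is $E_8(k)$-conjugate to the composition $\PGL_2 \hookrightarrow \SL_3 \hookrightarrow E_8$.

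Conjugation by an $E_8(k)$-element acts trivially on $H^1(k,E_8)$, so the map $H^1(k,\PGL_2) \to H^1(k,E_8)$ factors through $H^1(k,\SL_3) = 0$ and is identically zero. The main obstacle is the Dynkin/Jacobson--Morozov step, which upgrades Weyl-conjugacy of the Cartan elements to $E_8$-conjugacy of the full $\SL_2$-triples; I would handle this either by citing the standard classification of nilpotent orbits in $E_8$ (our orbit is labelled $A_2$, of Dynkin index $4$) or by constructing an explicit conjugating element of $E_8(k)$ using the root-vector data implicit in Table~\ref{ACD}.
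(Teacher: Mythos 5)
Your strategy is genuinely different from the paper's, and it is an appealing one: rather than computing the twisted form directly, you try to show the embedding $\PGL_2\hookrightarrow E_8$ is $E_8(k)$-conjugate to one factoring through a simply connected subgroup $\SL_3$, and then invoke $H^1(k,\SL_3)=0$. Your numerical claims are correct: $h = -2\beta$ with $\beta = \e_1+\e_2+2\e_3+2\e_4+\e_5$ a root, $\|h\|^2/2 = 4$, $\e_2+\e_4$ is a root, and the principal $\SL_2\subset\SL_3$ attached to the Levi $A_2 = \langle\e_2,\e_4\rangle$ has $\alpha_1^\vee\mapsto 2(\e_2+\e_4)$, so the two Cartan elements are $W(E_8)$-conjugate and hence $N(T)(k)$-conjugate.

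The gap, however, is real and your two proposed repairs do not close it. Dynkin's classification of $\mathfrak{sl}_2$-subalgebras (equivalently, of nilpotent orbits by weighted Dynkin diagram) is a statement over an algebraically closed field. It tells you that the two $\SL_2$-subgroups are $E_8(\bar{k})$-conjugate, not $E_8(k)$-conjugate. Citing the classification of nilpotent orbits in $E_8$ only reproduces this $\bar{k}$-statement. The rationality question is nontrivial: after conjugating the two Cartan elements to coincide, you are asking whether two $k$-points of the open $Z_G(h)^\circ$-orbit in $\mathfrak{g}_2$ lie in the same $Z_G(h)^\circ(k)$-orbit, and the obstruction sits (roughly) in $H^1\bigl(k, Z_G(e,h,f)\bigr)$. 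For the $A_2$ orbit in $E_8$ this centralizer has identity component of type $E_6$, whose $H^1$ is not trivial in general, so the conjugacy cannot be taken for granted — it is precisely the kind of thing that needs a hands-on argument. ``Constructing an explicit conjugating element'' is exactly the missing content; it is plausible it can be done (both triples are sums of Chevalley basis elements), but as written this is a to-do, not a proof.

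The paper sidesteps the conjugacy question entirely. It fixes a cocycle $\eta\in Z^1(k,\PGL_2)$, observes that its image centralizes the $\rtC$-torus so the twisted $(E_8)_\eta$ has a large split torus with derived centralizer $D$ of type $D_4$, explicitly computes the projection of the twisted cocycle to $\SO_8$ as a concrete $8\times 8$ matrix, checks the associated quadratic form is hyperbolic, and concludes $D$ and hence $(E_8)_\eta$ is split. This is more computational but makes no appeal to rational conjugacy of $\mathfrak{sl}_2$-triples. If you want to salvage your approach, the clean way would be to \emph{replace} the Dynkin step by exactly such an explicit rational conjugation — at which point you would be doing a calculation of essentially the same order of difficulty as the paper's.
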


We warm up by doing a toy version of a computation necessary for the proof of the lemma.

\begin{eg}\label{descent.quad}
Let $\O_n$ be the orthogonal group of the symmetric bilinear form $f$ with matrix $S_n$ as in \ref{inclusions}.  Fix a quadratic extension $k(\sqrt{a})/k$.  For $\iota$ the nontrivial $k$-automorphism of $k(\sqrt{a})$ and $c \in \kx$, the 1-cocycle $\eta \in Z^1(k(\sqrt{a})/k, \O_2)$ defined by
\[
\eta_\iota = \stbtmat{0}{c}{c^{-1}}{0} 
\]
defines a bilinear form $f_\eta$ over $k$.  It is the restriction of $f$ to the $k$-subspace of $k(\sqrt{a})^2$ of elements fixed by $\eta_\iota \iota$.  This subspace has basis 
\[
\stcolvec{c}{1} \eand \stcolvec{c \sqrt{a}}{-\sqrt{a}},
\]
so $f_\eta$ is isomorphic to $\qform{2c}\qform{1, -a}$.
\end{eg}

\begin{proof}[Proof of Lemma \ref{rinv.crux}]
Fix a cocycle $\eta \in Z^1(k,  \PGL_2)$.  The rank 4 maximal torus in $\psp_8$ (intersection with the torus in $E_8$ specified by the pinning) is centralized by the image of $\eta$, so it gives  a $k$-split torus $S$ in the twisted group $(E_8)_\eta$.  A semisimple anisotropic kernel of $(E_8)_\eta$ is contained in the derived subgroup $D$ of the centralizer of $S$.  The root system of $D$ (over an algebraic closure of $k$) consists of the roots of $\rtE$ orthogonal to the elements of the coroot lattice with image lying in $S$, which are given in Table \ref{ACD}.  The roots of $D$ form a system of rank 4 with simple roots $\phi_1, \phi_2, \phi_3, \phi_4$ as in Table \ref{d4ine8.table};
\begin{table}[ht]
\[
\begin{array}{c|cccc}
D_4&\phi_1&\phi_2&\phi_3&\phi_4 \\ \hline
E_8&\e_6&\e_1+\e_2+2\e_3+2\e_4+\e_5&\e_5+\e_6+\e_7&\e_4+\e_5+\e_6+\e_7+\e_8
\end{array}
\]
\caption{Simple roots in the centralizer of the $C_4$-torus in $E_8$} \label{d4ine8.table}
\end{table}
 they span a system of type $D_4$ with Dynkin diagram as in Figure \ref{d4ine8.fig}, where the unlabeled vertex is the negative $-\phit$ of the highest root $\phi_1 + 2 \phi_2 + \phi_3 + \phi_4 = \et$.
\setcounter{figure}{\value{table}}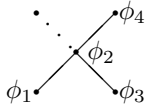
\begin{figure}[bh]
\[
\begin{picture}(3.5,2)
  \put(2,1){\line(-1,-1){0.7}}
    \multiput(1.9,1.1)(-0.2,0.2){3}{\circle*{0.01}}
    \put(1.3,0.3){\circle*{\darkrad}}
    \put(1.3,1.7){\circle*{\darkrad}}
    \put(1,0.1){\makebox(0,0.4)[b]{$\phi_1$}}

    
    \put(2.2,0.8){\makebox(0,0.4)[l]{$\phi_2$}}


    \put(2,1){\line(1,-1){0.7}}
    \put(2,1){\line(1,1){0.7}}

    \put(2,1){\circle*{\darkrad}}

    \put(2.7,0.3){\circle*{\darkrad}}
    \put(2.7,1.7){\circle*{\darkrad}}
    
    \put(3,0.1){\makebox(0,0.4)[b]{$\phi_3$}}
    \put(3,1.5){\makebox(0,0.4)[b]{$\phi_4$}}
    
\end{picture}
\]
\caption{Extended Dynkin diagram of centralizer of the $C_4$-torus in $E_8$} \label{d4ine8.fig}
\end{figure}

We now compute the map $\SL_2 \ra D$.  On the level of tori, it is given by Table \ref{ACD}.  On the level of Lie algebras, we compute using the explicit map \eqref{two.iso} that
the element $\stbtmat{0}{1}{0}{0}$ of $\mathfrak{sl}_2$ maps to
\[
\tbtmat{0}{1}{0}{0} \mapsto 
\left( \begin{smallmatrix}
0 & 0 & 1_4 & 0 \\
0 & 0 & 0 & 1_4 \\
0 & 0 & 0 & 0 \\
0 & 0 & 0 & 0
\end{smallmatrix} \right)
\mapsto
\left( \begin{smallmatrix}
0 & 1_4 & 0 & 0 \\
0 & 0 & 0 & 0 \\
0 & 0 & 0 & -1_4 \\
0 & 0 & 0 & 0
\end{smallmatrix} \right)
\]
in $M_{16}(k)$.  In terms of the Chevalley basis of the Lie algebra of $E_8$, $\stbtmat{0}{1}{0}{0}$ maps to 
\[
X_{\phi_1} + X_{\phi_3} + X_{\phi_4} + X_{-\phit}.
\]

The cocycle $\eta$ represents a quaternion algebra $(a, b)$ over $k$.  If $Q$ is split, then $\eta$ is zero and we are done.  Otherwise, $a$ is not a square.  Replacing $\eta$ with an equivalent cocycle, we may assume that $\eta$ belongs to $Z^1(k(\sqrt{a})/k, \PGL_2)$ and takes the value
\[
\eta_\iota = \Int \stbtmat{0}{b}{1}{0} = \Int \stbtmat{0}{-\sqrt{-b}}{1/\sqrt{-b}}{0}
\]
on the non-identity $k$-automorphism $\iota$ of $k(\sqrt{a})$.  That is, $\eta_\iota$ is conjugation by the element $w_{\alpha_1}(-\sqrt{-b})$ in Steinberg's notation for generators of a Chevalley group from \cite{St}.  The image of $\eta$ in $E_8$ is the cocycle $\ehat$ with
\begin{equation} \label{eta.im}
\ehat_\iota :=  \prod_\phi w_\phi(-\sqrt{-b})
\end{equation}
where $\phi$ ranges over the set $\Sigma := \{ -\phit, \phi_1, \phi_3, \phi_4 \}$.
(The order of terms in the product does not matter, as the roots in $\Sigma$ are pairwise orthogonal.)  We compute the action of this element on each $x_\phi \!: \Ga \ra D_4$ for $\phi \in \Sigma$.  Using the orthogonality of the roots in $\Sigma$, we have:
\begin{equation} \label{eta.act}
\Int(\ehat_\iota) x_\phi(u) = \Int(w_\phi(-\sqrt{-b})) x_\phi(u) = x_\phi(u/b),
\end{equation}
where the second equality is by the identities in \cite[p.~66]{St}.

We now identify $D$ (over an algebraic closure) with $\Spin_8$ using the pinning of $\Spin_8$ from \cite{Bou:g7} and project $\ehat$ to a 1-cocycle with values in $\SO_8$. This cocycle defines a quadratic form $q$ that we claim is hyperbolic.  Indeed, from equation \eqref{eta.act}, we deduce that the image of $\ehat$ in $\SO_8$ is the matrix
\[
\left( \begin{smallmatrix}
&&&&&&&-1 \\
&&&&&&1/b \\
&&&&&-b \\
&&&&1 \\
&&&1 \\
&&-1/b\\
&b\\
-1
\end{smallmatrix} \right)
\]
This preserves the hyperbolic planes in $k^8$ spanned by the 1st and 8th, 2nd and 7th, etc., standard basis vectors, so we can compute the quadratic form by restricting to each of these planes as in Example \ref{descent.quad}.  One finds that $q$ is isomorphic to
\[
\qform{2} \ot \qform{-1, b^{-1}, -b, 1} \ot \qform{1, -a},
\]
which is hyperbolic because the middle term is.  In particular, the twisted group $(\SO_8)_\ehat$ is split, and the same is true for $D$.   We conclude that $(E_8)_\eta$ is split and the image of $\eta$ in $H^1(k, E_8)$ is zero.
\end{proof}

\begin{rmk}
In the case where $k$ contains a square root of every element of the prime field $F$, one can given an easier proof of Lemma \ref{rinv.crux} as follows.  Repeat the first paragraph.  The composition
\[
H^1(*, \PGL_2) \ra H^1(*, E_8) \xrightarrow{r_{E_8}} H^3(*, \QZt)
\]
gives a normalized invariant of $\PGL_2$, which is necessarily of the form $Q \mapsto [Q] \cdot x$ for some fixed $x \in H^1(F, \Zm2)$ by \cite[18.1, \S23]{Se:ci}.  Thus every element of $H^1(k, E_8)$ coming from $H^1(k, \PGL_2)$ has Rost invariant zero (because $x$ is killed by $k$) and is isotropic (obviously), hence is zero by Prop.~\ref{index}(1) below.
\end{rmk}

\section{Rost invariant}

\begin{thm} \label{const.rost}
The composition
\[
H^1(k, \PGL_2) \times H^1(k, \psp_8) \times H^1(k, \mmu2) \ra H^1(k, E_8) \xrightarrow{r_{E_8}} H^3(k, \QZt)
\]
is given by
\[
Q, (C, \gamma), c \mapsto \D(C, \gamma) + (c) \cdot [Q \ot C] \quad \in H^3(k, \Zm2).
\]
\end{thm}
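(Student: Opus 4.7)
The plan is to establish the formula in three stages.

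First, by equation \eqref{rinv.mu2} from \ref{dec.inv},
\[
r_{E_8}(Q, (C,\gamma), c) = r_{E_8}(Q, (C,\gamma), 1) + (c) \cdot [Q \otimes C],
\]
which matches the $(c) \cdot [Q \otimes C]$ summand in the claim. Hence it suffices to prove $r_{E_8}(Q, (C, \gamma), 1) = \Delta(C, \gamma)$.

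Next, I would show that the $Q$-factor makes no further contribution. Since $\PGL_2$ and $\psp_8$ commute in $\hspin_{16}$ (they are the images of the two factors of $(\SL_2 \times \Sp_8)/\mmu2$ in \ref{inclusions}), the image cocycle splits as a commuting product
\[
f(Q, (C, \gamma), 1) = f(Q, 1, 1) \cdot f(1, (C, \gamma), 1)
\]
in $Z^1(k, E_8)$. Writing $\eta := f(Q, 1, 1)$, Lemma \ref{rinv.crux} says $\eta$ is cohomologically trivial, so $r_{E_8}(\eta) = 0$ and $(E_8)_\eta \cong E_8$. Gille's twisting formula (as used in the proof of Theorem \ref{welldef}) then yields
\[
r_{E_8}(f(Q, (C,\gamma), 1)) = r_{(E_8)_\eta}(f(1, (C,\gamma), 1));
\]
since $\PGL_2$ centralizes $\psp_8$, the twist leaves the $\psp_8$-subgroup intact, so the right-hand side equals $r_{E_8}(f(1, (C,\gamma), 1))$.

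Finally, I would identify this with $\Delta(C, \gamma)$. The cocycle $f(1, (C, \gamma), 1)$ is the image of $(C, \gamma) \in H^1(k, \psp_8)$ under the composition $\psp_8 \to \hspin_{16} \to E_8$. Since $\Sp_8 \to \Spin_{16} \to E_8$ has Rost multiplier $1$ at both steps (by \ref{inclusions} and \eqref{rostmult}), the restriction of $r_{E_8}$ along $\psp_8 \hookrightarrow E_8$ coincides with the canonical $\Delta$-invariant of degree 8 algebras with symplectic involution.

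I expect the main obstacle to be the identification in the second step of the Rost invariants in $E_8$ versus $(E_8)_\eta$ compatibly with the $\psp_8$-subgroup: the isomorphism $(E_8)_\eta \cong E_8$ coming from a trivializing cochain for $\eta$ need not fix the $\psp_8$-subgroup pointwise, so one must verify either that a trivializing cochain can be chosen inside the centralizer $Z_{E_8}(\psp_8)$, or that the resulting non-canonical comparison preserves the Rost invariant of the $\psp_8$-cohomology class.
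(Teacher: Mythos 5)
Your reduction to $c = 1$ via \eqref{rinv.mu2} matches the paper's opening move, but from there your route diverges from the paper's and has two genuine gaps. The paper does \emph{not} attempt to identify $r_{(E_8)_\eta}$ with $r_{E_8}$ on the $\psp_8$-classes; instead it uses Serre's classification of cohomological invariants twice over. It first restricts to tuples where $(C,\gamma) = (Q'\otimes M_4(k), \binv\otimes\hyp)$, realizes the resulting invariant of $\PGL_2 \times \PGL_2$ as an invariant of $(\Zm2)^{\times 4}$, and shows it is identically zero using Lemma~\ref{rinv.crux} together with the vanishing of $\esix$ on a split hyperbolic $\As$; it then uses the classification of normalized invariants of $\psp_8$ from \cite[4.1]{GPT} (they are spanned by $[C]$ and $\Delta$) and pins down the two coefficients by explicit evaluation, twisting by the image of $(Q, (C,\hyp),1)$ so that the Rost-multiplier fact applies to the \emph{twisted} group $\Sp(C,\hyp)$, which has nontrivial $H^1$.

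The obstacle you flag at the end of your step two is real and, as far as I can see, not removable within your framework. A trivializing cochain $g$ for $\eta$ cannot lie in $Z_{E_8}(\psp_8)$: that centralizer is $\PGL_2 \times \mmu2$ and $[\eta]$ is nontrivial in $H^1(k, \PGL_2)$ whenever $Q$ is nonsplit, so no such $g$ exists. Without it, $\mathrm{Int}(g)_*\zeta$ is the cocycle $\sigma \mapsto g\zeta_\sigma g^{-1}$, which need not be cohomologous to $\zeta$ in $E_8$; Gille's formula only gives $r_{E_8}(\eta\zeta) = r_{(E_8)_\eta}(\zeta)$, and the further equality $r_{(E_8)_\eta}(\zeta) = r_{E_8}(\zeta)$ is precisely what has to be proved, not what follows automatically. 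Your step three has a separate problem: the Rost-multiplier-one statement concerns the simply connected group $\Sp_8 \to E_8$, but $H^1(k,\Sp_8)$ is trivial, so by itself this gives no information about the invariant on $H^1(k,\psp_8)$; the assertion that the restriction of $r_{E_8}$ to $\psp_8$ ``coincides with $\Delta$'' is exactly the content that needs proof, which the paper supplies via \cite[4.1]{GPT} and the twist to $\Sp(C,\hyp)$. Either replace your steps two and three by the invariant-theoretic argument, or find an independent proof that $r_{(E_8)_\eta}(\zeta) = r_{E_8}(\zeta)$; without one of these the proposal does not close.
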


Here $\D$ refers to the discriminant of symplectic involutions on algebras of degree 8 defined in \cite{GPT}.

\begin{proof}
By \eqref{rinv.mu2}, it suffices to prove the case where $c = 1$.

\smallskip

\noindent{\underline{\emph{Step 1.}}} We first verify the proposition in case $C$ has index at most 2 and $\gamma$ is hyperbolic; we write $\Cg = (Q' \ot M_4(k), \binv \ot \hyp)$ for some quaternion algebra $Q'$.  In this way, we restrict $r_{E_8}$ to an invariant $H^1(k, \PGL_2) \times H^1(k, \PGL_2) \ra H^3(k, \QZt)$, which we claim is zero.

We argue as in \cite[\S17]{Se:ci}.  We view $H^1(k, \PGL_2)$ as the image of $H^1(k, \Zm2 \times \Zm2)$ via the map that sends elements $a, b \in \ksq$ to the quaternion algebra $(a, b)$.  By restriction, $r_{E_8}$ can be viewed as an invariant of $(\Zm2)^{\times 4}$; its image consists of elements killed by a quadratic extension (by Lemma \ref{rinv.crux}), so belong to the 2-torsion in $H^3(k, \Zm4)$, which is $H^3(k, \Zm2)$ by \cite{MS:Kcoh}.  Because the image of $H^1(k, (\Zm2)^{\times 4})$ lies in $H^3(k, \Zm2)$ and the value of $r_{E_8}$ on an element is unaltered if we interchange the first two coordinates (corresponding to the quaternion algebra $Q$) or the third and fourth coordinates (corresponding to $Q'$), we deduce that $r_{E_8}$ is of the form
\[
Q, Q' \mapsto \la_0 + \la_Q \cdot [Q] + \la_{Q'} \cdot [Q'] 
\]
for uniquely determined elements $\la_0, \la_Q, \la_{Q'}$ in $H^\bullet(k, \Zm2)$.  (There is no term involving $[Q] \cdot [Q']$ because such a term would have degree at least 4.)

The element $\la_0$ is zero, because the Rost invariant $r_{E_8}$ is normalized.  The coefficient $\la_Q$ is zero by Lemma \ref{rinv.crux}.

Write $K$ for the field obtained by adjoining indeterminates $a, b$ to $k$ and $\Qgen$ for the generic quaternion algebra $(a, b)$ over $K$.  On the one hand,  the value of $r_{E_8}$ on the triple
\[
\Qgen, (\Qgen \ot M_4(k), \binv \ot \hyp), 1 \quad \text{is} \quad 
\la_{Q'} \cdot [\Qgen].
\]
On the other hand, the algebra with involution 
\[
\As := (\Qgen, \binv) \ot (\Qgen \ot M_4(k), \binv \ot \hyp)
\]
has $A$ split and $\s$ hyperbolic, so $\esix\As$ is zero as an element of $H^3(k, \Zm4)$.  Thus $\la_{Q'} = 0$ and $r_{E_8}$ sends $Q, Q'$ to 0.  This verifies  the proposition for $C$ of index at most 2 and $\gamma$ hyperbolic.

\medskip

\noindent{\underline{\emph{Step 2.}}} For a fixed quaternion algebra $Q$, the map 
\[
(C, \gamma) \mapsto r_{E_8}(Q, (C, \gamma), 1) \quad \in H^3(K, \Zm2)
\]
defines an invariant of $H^1(k, \psp_8)$ that is zero on the trivial class by Step 1, hence by \cite[4.1]{GPT} is of the form
\begin{equation} \label{const.1}
\Cg \mapsto \la_1 \cdot [C] + \la_0 \cdot \D\Cg
\end{equation}
for uniquely determined elements $\la_i \in H^i(k, \Zm2)$ for $i = 0, 1$ (which may depend on $Q$).

In case $C$ has index 2 and $\gamma$ is hyperbolic, $\Cg$ maps to zero by Step 1 and $\D\Cg = 0$, so $\la_1 = 0$. 

We are left with deciding whether the element $\la_0$ is 0 or 1 in $H^0(k, \Zm2)$.
Suppose that $C$ has index 2 and put $\eta$ for the image of $Q, (C, \hyp), 1$ in $H^1(k, E_8)$.  The homomorphism
\[
\Sp(C, \hyp) = (\Sp_8)_\eta \ra (E_8)_\eta
\]
has Rost multiplier 1 by \ref{inclusions}, i.e., the induced map
\[
H^1(k, \Sp(C, \hyp)) \ra H^1(k, (E_8)_\eta) \xrightarrow{r_{E_8}} H^3(k, \QZt)
\]
is the Rost invariant of $\Sp(C, \hyp)$.  In particular it is not zero, so $\la_0$ is not zero, i.e., $\la_0  = 1$.  This proves the proposition.
\end{proof}

\section{Value of $\esix$ on decomposable involutions} \label{e3dec.sec}

Let $Q$ be a quaternion algebra and let $\Cg$ be a central simple algebra of degree $8$ with symplectic involution.   The tensor product $(Q, \binv) \ot \Cg$ has degree 16 (clearly),  trivial discriminant \cite[7.3(5)]{KMRT}, and one component of the even Clifford algebra is split \cite[4.15, 4.16]{Tao:Cl}, so the tensor product belongs to $I^3$.   
\begin{cor}[of Th.~\ref{const.rost}] \label{dec.thm}
We have:
\[
\esix \left[ (Q, \binv) \ot \Cg \right] = \D (C, \gamma) \quad\in H^3(k, \Zm4) / [Q \ot C] \cp H^1(k, \mmu2).\hfill\qed
\]
\end{cor}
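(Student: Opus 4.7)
The plan is to recognize the corollary as a direct specialization of Theorem~\ref{const.rost} with trivial $\mmu2$-component, once we match the concrete lift of $(Q,\binv) \ot (C,\gamma)$ to $H^1(k,\hspin_{16})$ supplied by the inclusions of \S\ref{inclusion.sec} with the abstract lift appearing in the definition of $\esix$.

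First I would observe that the explicit tensor-product isomorphism \eqref{two.iso} means the inclusion $\PGL_2 \times \psp_8 \subset \hspin_{16}$ from \ref{inclusions}, composed with $\hspin_{16} \to \PSO_{16}$, sends a class $(Q,(C,\gamma)) \in H^1(k,\PGL_2) \times H^1(k,\psp_8)$ to the class of $(Q,\binv) \ot (C,\gamma)$ in $H^1(k,\PSO_{16})$. Indeed, the Kronecker-product identification in \eqref{two.iso} is precisely the matrix realization of the tensor product of algebras-with-involution, so twisting the identity cocycle in $\PSO_{16}$ by $(Q,(C,\gamma))$ produces $(Q,\binv) \ot (C,\gamma)$. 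Letting $\eta \in H^1(k,\hspin_{16})$ denote the image of the triple $(Q,(C,\gamma),1)$ under
\[
H^1(k,\PGL_2) \times H^1(k,\psp_8) \times H^1(k,\mmu2) \to H^1(k,\hspin_{16}),
\]
the class $\eta$ is therefore a valid lift of $(Q,\binv) \ot (C,\gamma)$. (The algebra belongs to $I^3$ by the remarks immediately preceding the corollary, so Lemma~\ref{class} ensures such a lift exists and Theorem~\ref{welldef} guarantees the resulting invariant is well defined modulo $[Q \ot C] \cp H^1(k,\mmu2)$.)

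With this lift in hand, the definition of $\esix$ from \S\ref{e3.sec} reads
\[
\esix[(Q,\binv) \ot (C,\gamma)] = r_{E_8}(\eta) \quad \text{in } H^3(k,\Zm4) / [Q \ot C] \cp H^1(k,\mmu2),
\]
and Theorem~\ref{const.rost} evaluated at $c=1$ gives $r_{E_8}(\eta) = \D(C,\gamma) + (1) \cp [Q \ot C] = \D(C,\gamma)$. Combining these yields the claim. I do not expect any step to be a real obstacle: the corollary is essentially a reading of Theorem~\ref{const.rost} through the definition of $\esix$, and the only verification needed beyond Theorem~\ref{const.rost} is the compatibility of the inclusion $\PGL_2 \times \psp_8 \subset \hspin_{16}$ with the tensor product of algebras-with-involution, which is built into the construction in \S\ref{inclusion.sec} by design.
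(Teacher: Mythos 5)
Your proposal is correct and takes essentially the same approach as the paper: the corollary is presented there as an immediate consequence of Theorem~\ref{const.rost} (with the $\qed$ placed in the statement itself, i.e.\ no separate proof given), and your argument simply makes explicit the implicit chain --- that the lift $\eta$ supplied by the inclusion of \ref{inclusions} lands on the class of $(Q,\binv)\ot(C,\gamma)$ in $H^1(k,\PSO_{16})$, so the definition of $\esix$ plus the $c=1$ case of Theorem~\ref{const.rost} yields the displayed formula.
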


\begin{borel*}
We compare the invariants $\esix$ (from \S\ref{e3.sec}) and $\ehyp$ (from \S\ref{e3p.sec}).  The invariant $\esix$ is only defined on $\As$ in $I^3$ where $A$ has degree 16.  For such $\As$, the invariant $\ehyp$ is only defined if $A$ is isomorphic to $M_2(C)$ for a central simple algebra $C$ of degree 8.  It turns out that the two invariants agree if the algebra $C$ is decomposable.
\end{borel*}

\begin{cor} \label{agree}
If $\As$ is in $I^3$ and $A$ is isomorphic to $M_2(Q_1 \ot Q_2 \ot Q_3)$ for quaternion algebras $Q_1, Q_2, Q_3$ (e.g., this holds if the index of $A$ is $\le 4$),
then 
$\esix\As = \ehyp\As$.
\end{cor}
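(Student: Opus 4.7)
The strategy is a twisting argument for Rost invariants. Let $\eta \in H^1(k, \hspin_{16})$ be a lift of $\As$, so $\esix\As = r_{E_8}(\eta)$ modulo $[A] \cp H^1(k, \mmu2)$, and let $\eta' \in H^1(k, \Spin(A, \hyp))$ be the class used to define $\ehyp\As$. Because $\Spin(A, \hyp)$ is a $k$-form of $\Spin_{16}$ obtained by twisting along the class of $(A, \hyp)$, and because the composition $\Spin_{16} \to \hspin_{16} \to E_8$ has Rost multiplier $1$ by \eqref{rostmult}, the twisting formula \cite[p.~76, Lemma 7]{Gille:inv} gives an identity of the shape
\[
r_{E_8}(\eta) \;\equiv\; r_{E_8}(\tilde\tau) + r_{\Spin(A, \hyp)}(\eta') \pmod{[A] \cp H^1(k, \mmu2)},
\]
where $\tilde\tau \in H^1(k, \hspin_{16})$ is a lift of the class of $(A, \hyp)$ (which exists by Lemma \ref{class}, since hyperbolic involutions are trivially in $I^3$); the $\mmu2$-ambiguity of the lift is absorbed into $[A] \cp H^1(k, \mmu2)$ exactly as in the proof of Theorem \ref{welldef}. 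It therefore suffices to show $r_{E_8}(\tilde\tau) = 0$ modulo $[A] \cp H^1(k, \mmu2)$, i.e.\ that $\esix[(A, \hyp)]$ vanishes.

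To compute $\esix[(A, \hyp)]$ I use the decomposability hypothesis. Rewrite
\[
A \;=\; M_2(Q_1 \ot Q_2 \ot Q_3) \;=\; Q \ot C, \qquad Q := Q_1,\quad C := M_2(Q_2 \ot Q_3),
\]
so $C$ has degree $8$. Fixing any symplectic involution $\gamma_0$ on $C$ (always available since $\deg C$ is even), the tensor $(Q, \binv) \ot (C, \gamma_0)$ is an orthogonal involution on $A$ in $I^3$ by the paragraph preceding Corollary \ref{dec.thm}, and the subgroup $\PGL_2 \times \psp_8 \times \mmu2 \subset \hspin_{16}$ of \ref{inclusions} provides a lift $\tilde\tau \in H^1(k, \hspin_{16})$ of its class. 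By Theorem \ref{const.rost} applied with $c = 1$,
\[
r_{E_8}(\tilde\tau) \;=\; \D(C, \gamma_0) \qquad \text{in}\ H^3(k, \Zm4)/[A] \cp H^1(k, \mmu2).
\]
When $\gamma_0$ can be taken hyperbolic—available precisely when $\ind C \le 4$—the discriminant $\D(C, \gamma_0)$ vanishes and we are done. To reach the class of the actual $\hyp$ (rather than of $(Q, \binv) \ot (C, \gamma_0)$) I use that both are hyperbolic orthogonal involutions on $A$, hence Witt-equivalent; by Example \ref{In.egs}(4) and the Witt-invariance of $\esix$, their $\esix$-values agree.

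In the residual $\ind C = 8$ case no hyperbolic symplectic involution exists on $C$, so one cannot directly arrange $\D(C, \gamma_0) = 0$; the main obstacle is precisely this case. I would handle it by extending scalars to the Severi–Brauer variety of a quaternion factor (say $Q_1$) to drop the index, apply the argument above over the extension (where the relevant $E(A)$ kernel becomes accessible), and descend using that the restriction map on $H^3/[A]\cp H^1$ is injective enough for this purpose—mirroring the descent trick employed in \cite{Berhuy:quat} to define $\ehyp$ in the index-$2$ case. The rest of the proof is formal bookkeeping: identifying the twist of $\Spin_{16}$ by $\tilde\tau$ with $\Spin(A, \hyp)$, matching the Rost multipliers across the chain $\Spin(A, \hyp) \to \hspin(A, \hyp) \to (E_8)_{\tilde\tau}$, and verifying that the discrepancy between $\tilde\tau \cdot \eta'$ and $\eta$ in $H^1(k, \hspin_{16})$ is a hyperplane-reflection class, hence contributes only to $[A] \cp H^1(k, \mmu2)$.
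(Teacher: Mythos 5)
Your proof takes essentially the same approach as the paper: a twisting argument reduces the claim to showing $\esix[(A, \hyp)] = 0$, and that value is then computed via Corollary~\ref{dec.thm}. The only substantive difference is the choice of tensor decomposition of $(A, \hyp)$. The paper writes $(A, \hyp) \cong (M_2(k), \binv) \ot \bigl(\ot_{i=1}^3 (Q_i, \binv)\bigr)$, so that the canonical involution on the split quaternion factor is hyperbolic (forcing the tensor product to be hyperbolic) and the degree-$8$ factor is completely decomposable (so its discriminant vanishes by \cite{GPT}). You instead set $Q = Q_1$ and $C = M_2(Q_2 \ot Q_3)$ and take $\gamma_0$ hyperbolic on $C$, which likewise makes the tensor product hyperbolic and $\D(C,\gamma_0) = 0$. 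Both decompositions are correct, and both immediately give $\esix[(A, \hyp)] = 0$ from Corollary~\ref{dec.thm}.

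However, your worry about a residual ``$\ind C = 8$'' case is vacuous: $Q_2 \ot Q_3$ is a biquaternion algebra of degree $4$, so $C = M_2(Q_2 \ot Q_3)$ always has index at most $4$, and a hyperbolic symplectic involution on $C$ always exists. The final paragraph sketching a descent argument through the Severi--Brauer variety of $Q_1$ therefore addresses a case that never arises and should simply be deleted; it is also not a rigorous argument as written (the restriction maps on $H^3(\cdot,\Zm4)/[A]\cp H^1(\cdot,\mmu2)$ need not be injective, and the sketch does not engage with how to perform the descent). A smaller point: since any two hyperbolic orthogonal involutions on a fixed central simple algebra are conjugate, the identification of $(Q_1,\binv)\ot(C,\gamma_0)$ with $(A,\hyp)$ is by an isomorphism; invoking ``Witt-equivalence'' is weaker than needed and introduces a notion the paper has not established as preserving $\esix$.
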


\begin{proof} 
The hypothesis on $A$ implies that $A$ supports a hyperbolic involution $\hyp$:
\[
(A, \hyp) \cong (M_2(k), \binv) \ot (C, \gamma) \quad \text{for $(C, \gamma) = \ot_{i=1}^3 (Q_i, \binv)$.}
\]
Let $\eta, \eta' \in H^1(k, \hspin_{16})$ have image $(A, \hyp), \As$ in $H^1(k, \PSO_{16})$ respectively.  The bottom row of \eqref{twist.diag} can be rewritten as
\[
H^1(k, \hspin(A, \hyp)) \ra H^1(k, (E_8)_\eta) \xrightarrow{r_{(E_8)_\eta}} H^3(k, \QZt).
\]
For $\nu \in H^1(k, \Spin(A, \hyp))$ mapping to $\tau(\eta') \in H^1(k, \hspin(A, \hyp))$, we have:
\[
r_{(E_8)_\eta}(\tau(\eta')) = r_{\Spin(A, \hyp)}(\nu) \quad \text{by \eqref{rostmult}.}
\]
The equality
\[
\ehyp\As = \esix\As - \esix(A, \hyp),
\]
follows by commutativity of \eqref{twist.diag}.  Since $\Cg$ is completely decomposable, $\D\Cg$ is zero and $\esix(A, \hyp) = 0$ by Cor.~\ref{dec.thm}, which proves the corollary.
\end{proof}

\part{Groups of type $E_8$ constructed from 9 parameters} 

\section{Construction of $E_8$'s}\label{const.sec}

In \ref{inclusions}, we gave concrete descriptions of embeddings $\PGL_2 \times \psp_8 \times \mmu2 \subset \hspin_{16} \subset E_8$.  Similarly, we can give an explicit embedding $\PGL_2^{\times 3} \subset \psp_8$ as in \cite[Table 9]{Dynk:ssub}.  On the level of coroot lattices the total inclusion 
\begin{equation} \label{const.emb}
\PGL_2^{\times 4} \times \mmu2 \subset \PGL_2 \times \psp_8 \times \mmu2 \subset \hspin_{16} \subset E_8
\end{equation}
is described in Table \ref{PGL3}.  We remark that the four copies of $\PGL_2$ are not normalized by a maximal torus of $E_8$.\begin{table}[ht]
\begin{tabular}{|ccc|} \hline
\parbox{1in}{simple (co)root in copy of $\PGL_2$}&in $C_4$&in $E_8$ \\ \hline 
$\alpha_1$&& $-(2\e_1 + 2\e_2 + 4\e_3 + 4\e_4 + 2\e_5)$\\
$\alpha_2$&$\gch_1 - \gch_3$& $-(2\e_1 + 4 \e_2 + 4 \e_3 + 6\e_4 + 4 \e_5 + 4\e_6 + 4 \e_7 + 2 \e_8)$\\
$\alpha_3$&$\gch_1 + \gch_3$& $-(2\e_1 + 4 \e_2 + 4 \e_3 + 6\e_4 + 6 \e_5 + 4\e_6 + 2 \e_7 + 2 \e_8)$\\
$\alpha_4$&$\gch_1 + 2\gch_2 + \gch_3$& $-(2\e_1 + 4 \e_2 + 4 \e_3 + 8\e_4 + 6 \e_5 + 4\e_6 + 2 \e_7)$ \\ \hline
\end{tabular}
\caption{Inclusion $\PGL_2^{\times 4} \subset \PGL_2 \times \psp_8 \subset E_8$ on the level of coroots} \label{PGL3}
\parbox{4in}{$\alpha_i$ is a simple (co)root in the $i$-th copy of $\PGL_2$.  That copy is inside $\psp_8$ for $i \ne 1$.}
\end{table}

Applying Galois cohomology to \eqref{const.emb} gives a function
\begin{equation} \label{const.map}
H^1(k, \PGL_2^{\times 4} \times \mmu2) \ra H^1(k, E_8).
\end{equation}
The first set classifies quadruples $(Q_1, Q_2, Q_3, Q_4)$ of quaternion $k$-algebras together with an element $c \in \ksq$, and the second set classifies groups of type $E_8$ over $k$.  Therefore we may view the function \eqref{const.map} as a construction of groups of type $E_8$ via Galois descent.  

\begin{cor}[of Th.~\ref{const.rost}] \label{e8const.rost}
The Rost invariant of a group of type $E_8$ constructed from $(Q_1, Q_2, Q_3, Q_4, c)$ is $(c) \cp \sum [Q_i]$.
\end{cor}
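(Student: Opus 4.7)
The plan is to apply Theorem \ref{const.rost} to the factorization
\[
H^1(k, \PGL_2^{\times 4} \times \mmu2) \longrightarrow H^1(k, \PGL_2 \times \psp_8 \times \mmu2) \longrightarrow H^1(k, E_8)
\]
coming from the chain \eqref{const.emb}. The first step is to identify the image in $H^1(k, \psp_8)$ of a triple $(Q_2, Q_3, Q_4)$ under the map induced by the embedding $\PGL_2^{\times 3} \subset \psp_8$ recorded in Table \ref{PGL3}. This embedding is, up to conjugacy, the ``tensor product'' one from \cite[Table 9]{Dynk:ssub}, so its Galois twist realizes the completely decomposable algebra with symplectic involution
\[
(C, \gamma) := (Q_2, \binv) \ot (Q_3, \binv) \ot (Q_4, \binv).
\]

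With this identification in hand, Theorem \ref{const.rost} applied to the data $\bigl(Q_1, (C, \gamma), c\bigr)$ immediately yields
\[
r_{E_8}(Q_1, Q_2, Q_3, Q_4, c) = \D(C, \gamma) + (c) \cp [Q_1 \ot C].
\]
Since $(C, \gamma)$ is completely decomposable, the discriminant $\D(C, \gamma)$ vanishes --- this is the same fact used in the proof of Cor.~\ref{agree}, and follows from the properties of $\D$ established in \cite{GPT}. Finally, $[Q_1 \ot C] = [Q_1] + [Q_2] + [Q_3] + [Q_4]$ in the Brauer group of $k$, so the right-hand side reduces to $(c) \cp \sum_{i=1}^4 [Q_i]$, which is the claim.

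The one thing that genuinely needs checking is the identification in the first paragraph, namely that the $\PGL_2^{\times 3} \subset \psp_8$ specified by Table \ref{PGL3} agrees (up to $\psp_8$-conjugacy) with the tensor product embedding. This can be read directly from Table \ref{PGL3}: the simple coroots $\gch_1 - \gch_3$, $\gch_1 + \gch_3$, and $\gch_1 + 2\gch_2 + \gch_3$ of the three $\SL_2$ factors span the rank-$3$ sublattice generated by $\gch_1, \gch_2, \gch_3$, and pick out precisely the standard rank-$3$ subsystem of $\rtC$ whose associated $8$-dimensional representation is the outer tensor product of three $2$-dimensional standard representations. Once this is in place, the functorial statement for the corresponding twisted forms of symplectic algebras is standard Galois-cohomological descent, and the proof is complete.
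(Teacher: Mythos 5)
Your proof is correct and follows essentially the same route as the paper: identify the twist of $\psp_8$ by $(Q_2,Q_3,Q_4)$ as the completely decomposable algebra with symplectic involution $(C,\gamma)=\otimes_{i=2}^4(Q_i,\binv)$, observe that $\D(C,\gamma)=0$ by \cite{GPT}, and apply Theorem~\ref{const.rost} with $[Q_1\ot C]=\sum[Q_i]$. The paper takes the identification of the $\PGL_2^{\times 3}\subset\psp_8$ with the tensor-product embedding for granted (it cites \cite[Table 9]{Dynk:ssub} at the point where Table~\ref{PGL3} is introduced), so the extra verification in your final paragraph is additional detail rather than a different argument.
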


\begin{proof}
As $\Cg$ is the tensor product $\ot_{i=2}^4 (Q_i, \binv)$, it is decomposable, and so has discriminant zero \cite{GPT}.  Th.~\ref{const.rost} gives the claim.
\end{proof}

\begin{borel*} \label{tensor3}
How much can we vary the data $(Q_1, Q_2, Q_3, Q_4, c)$ without changing the resulting group of type $E_8$?  For example, let $Q'_2, Q'_3, Q'_4$ be quaternion algebras such that the tensor products $\ot_{i=2}^4 (Q'_i, \binv)$ and $\ot_{i=2}^4 (Q_i, \binv)$ are isomorphic as algebras with involution.  Then the images of $(Q_1, Q_2, Q_3, Q_4, c)$ and $(Q_1, Q'_2, Q'_3, Q'_4, c)$ in $H^1(k, \PGL_2) \times H^1(k, \psp_8) \times H^1(k, \mmu2)$ agree, hence one obtains the same group of type $E_8$ from the two inputs.
\end{borel*}

We also have:

\begin{prop} \label{Q.perm}
For every permutation $\pi$, the group of type $E_8$ constructed from $(Q_{\pi 1}, Q_{\pi 2}, Q_{\pi 3}, Q_{\pi 4}, c)$ is the same.
\end{prop}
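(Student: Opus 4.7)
My plan is to reduce the statement to a generating set for $S_4$ and check each generator.  Since $S_4$ is generated by the stabilizer of the index $1$ (i.e., $S_3$ permuting $\{2,3,4\}$) together with the transposition $\pi_0 = (1\,2)$, it suffices to treat these two kinds of permutations.

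For any permutation $\pi$ fixing $1$: by commutativity and associativity of the tensor product of algebras with involution, $\bigotimes_{i=2}^{4}(Q_{\pi i},\binv)\cong\bigotimes_{i=2}^{4}(Q_i,\binv)$ as central simple algebras with symplectic involution.  Hence the two $5$-tuples share the same image in $H^1(k,\PGL_2)\times H^1(k,\psp_8)\times H^1(k,\mmu2)$, and by \ref{tensor3} define the same class in $H^1(k,E_8)$.

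For $\pi_0$: I will find $g\in E_8(k)$ whose inner automorphism restricts, on the subgroup $(\PGL_2)^{\times 4}\times\mmu2\subset E_8$, to the transposition of the first two $\PGL_2$ factors while fixing the remaining two and the central $\mmu2$.  Granted such $g$, the cocycle for $(Q_2,Q_1,Q_3,Q_4,c)$ is obtained from the cocycle for $(Q_1,Q_2,Q_3,Q_4,c)$ by conjugation by $g$; since $g\in E_8(k)$ is a coboundary, the two classes in $H^1(k,E_8)$ coincide.

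To produce $g$, I would work in the Weyl group.  By Table~\ref{PGL3}, the four copies of $\PGL_2$ correspond to four pairwise orthogonal elements $\alpha_1,\ldots,\alpha_4$ of $\coroot$ (each equal to twice a root of $\rtE$), so it suffices to exhibit $w\in W(\rtE)$ transposing $\alpha_1\leftrightarrow\alpha_2$ and fixing $\alpha_3,\alpha_4$; any such $w$ automatically fixes the image of $\mmu2$ (Weyl groups act trivially on centers of simply connected groups, hence on centers of their quotients) and lifts via the pinning to the required $g\in N_{E_8}(T)(k)$.  The main obstacle will be exhibiting $w$ explicitly.  A convenient method is to locate the four coroots inside the $\rtD$ subsystem of $\rtE$: in orthonormal coordinates $e_i$ for $D_8$, one can arrange the four (half)-coroots to be $e_1\pm e_2$ and $e_3\pm e_4$, whereupon $w=s_{e_2-e_5}\,s_{e_2+e_5}\in W(\rtD)\subset W(\rtE)$ (which negates $e_2$ and $e_5$ and fixes the other basis vectors) manifestly has the required effect.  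Translating this back to the Bourbaki basis of Table~\ref{PGL3} is routine.
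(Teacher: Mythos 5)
Your first reduction (permutations fixing the index $1$) is correct and is essentially the paper's observation \ref{tensor3}.  The difficulty is concentrated in the transposition $(1\,2)$, and here your argument has a genuine gap.

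You propose to produce $g\in E_8(k)$ realizing the transposition by lifting a Weyl group element $w\in W(\rtE)$ that swaps $\alpha_1\leftrightarrow\alpha_2$ on the coroot lattice while fixing $\alpha_3,\alpha_4$.  The problem is that the four copies of $\PGL_2$ are \emph{not} normalized by the maximal torus $T$ of $E_8$ --- this is noted explicitly in the text just after Table \ref{PGL3} --- so an element $g\in N_{E_8}(T)(k)$ lifting $w$ need not normalize $\PGL_2^{\times 4}$ at all, let alone induce the desired transposition of its factors.  Concretely, the $\mathfrak{sl}_2$ spanned by each factor has its nilpotent part spread across several $E_8$-root spaces (the proof of Lemma \ref{rinv.crux} exhibits $X_{\phi_1}+X_{\phi_3}+X_{\phi_4}+X_{-\tilde\phi}$ as the image of $\left(\begin{smallmatrix}0&1\\0&0\end{smallmatrix}\right)$ for the first factor), and nothing about $w$'s action on $T$ controls what $g$ does to these sums.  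The Weyl-group action on the four coroots is \emph{necessary} but far from \emph{sufficient}.  Secondary issues: you have not verified that $\{\alpha_i/2\}$ lies in the $W(\rtE)$-orbit of $\{e_1\pm e_2,\,e_3\pm e_4\}$ (there is more than one orbit of pairwise-orthogonal root quadruples in $\rtE$), and the translation back to Table \ref{PGL3} is deferred rather than done.

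The paper avoids these obstacles entirely.  It observes that for each $\pi$, the classes $\eta$ and $\eta_\pi$ in $H^1(k,\hspin_{16})$ differ by a well-defined element $\zeta_\pi$ of the abelian group $\G = H^1(k,\mmu2)/\im(\PSO_{16})_\eta(k)$, and that $\pi\mapsto\zeta_\pi$ is a homomorphism $\sym_4\to\G$.  Since $\G$ is abelian, this homomorphism kills the alternating group; and since $\zeta$ vanishes on the transposition $(3\,4)$ by \ref{tensor3}, it vanishes identically.  This abstract nonsense is exactly what lets one sidestep the explicit conjugation you are attempting, and is why the proposition is stated for all of $\sym_4$ even though only the subgroup $\sym_3$ is handled ``by hand.''  If you want to repair your approach, you would need to show directly that the tensor-decomposition normalizer in $\hspin_{16}(k)$ maps onto all of $\sym_4$ --- a nontrivial lifting question through the central $\mmu2$.
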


\begin{proof}
We compare the images $\eta$ and $\eta_\pi$ of $(Q_1, Q_2, Q_3, Q_4, c)$ and  $(Q_{\pi 1}, Q_{\pi 2}, Q_{\pi 3}, Q_{\pi 4}, c)$ respectively in $H^1(k, \hspin_{16})$.  As both $\eta$ and $\eta_\pi$ map to the class of $\otimes (Q_i, \binv)$ in $H^1(k, \PSO_{16})$, the class of $\eta_\pi$ is $\zeta_\pi \cdot \eta$ for some $\zeta_\pi \in H^1(k, \mmu2)$.  The element $\zeta_\pi$ is uniquely determined as an element of the abelian group $\G := H^1(k, \mmu2) / \im (\PSO_{16})_\eta(k)$, where $(\PSO_{16})_\eta(k)$ maps into $H^1(k, \mmu2)$ via the connecting homomorphism arising from the exact sequence at the bottom of diagram \eqref{hspin.diag}, see \cite[\S{I.5.5}, Cor.~2]{SeCG}.  This defines a homomorphism $\zeta$ from the symmetric group on 4 letters, $\sym_4$, to $\G$.

As $\G$ is abelian, the homomorphism $\zeta$ factors through the commutator subgroup of $\sym_4$, the alternating group.  But $\zeta$ vanishes on the odd permutation $(3 \, 4)$ by \ref{tensor3}, so $\zeta$ is the zero homomorphism.  This proves the proposition.
\end{proof}

\begin{borel}{Tits's construction} \label{Tits.const}
In \cite{Ti:const}, Tits gave a construction of algebraic groups of type $E_8$ with inputs an octonion algebra and an Albert algebra.  In terms of algebraic groups, there is an (essentially unique) inclusion of $G_2 \times F_4$ in $E_8$ \cite[p.~226]{Dynk:ssub}, and Tits's construction is the resulting map in Galois cohomology:
\[
H^1(k, G_2) \times H^1(k, F_4) \ra H^1(k, E_8).
\]
His construction and ours from \eqref{const.map} overlap, but they are distinct.

We compute the Rost invariant of a group $G$ of type $E_8$ constructed by Tits's recipe from an octonion algebra with 3-Pfister norm form $\gamma_3$ and an Albert algebra $A$.  Because the inclusions $G_2 \subset E_8$ and $F_4 \subset E_8$ both have Rost multiplier 1 \cite[p.~192]{Dynk:ssub}, we have:
\[
r_{E_8}(G) = e_3(\gamma_3) + r_{F_4}(A),
\]
where $r_{F_4}$ denotes the Rost invariant relative to the split group of type $F_4$.  Associated with $A$ are Pfister forms $\phi_3$ and $\phi_5$, where $\phi_i$ has dimension $2^i$ and $\phi_3$ divides $\phi_5$, see \cite[22.5]{Se:ci}.  We find:
\[
15r_{E_8}(G) = e_3(\gamma_3 + \phi_3) \quad \in H^3(k, \Zm2).
\]
\end{borel}

\section{Tits index of groups of type $E_8$} \label{index.sec}

In this section, we note some relationships between the Tits index of a group $G$ of type $E_8$ over $k$ and its Rost invariant $r_{E_8}(G)$.  

Recall that if $r_{E_8}(G)$ is killed by a quadratic extension or is 2-torsion, then it belongs to $H^3(k, \Zm2)$.  A \emph{symbol} is an element of the image of the cup product map $H^1(k, \Zm2)^{\times 3} \ra H^3(k, \Zm2)$.  The \emph{symbol length} of an element $x \in H^3(k, \Zm2)$ is the smallest integer $n$ such that $x$ is equal to a sum of $n$ symbols in $H^3(k, \Zm2)$.  Zero is the unique element with symbol length 0.

\begin{prop} \label{index}
Let $G$ be an isotropic group of type $E_8$.  Then:
\begin{enumerate}
\item If $r_{E_8}(G)$ is zero, then $G$ is split.
\item If $r_{E_8}(G)$ is split by a quadratic extension of $k$, then $r_{E_8}(G)$ has symbol length $\le 3$ in $H^3(k, \Zm2)$.
\item If $r_{E_8}(G)$ is $2$-torsion and $G$ has $k$-rank $\ge 2$, then the Tits index is given by Table \ref{E7.index.table}.
\end{enumerate}
\end{prop}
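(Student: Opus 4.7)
The plan is to attack the three parts in turn, using Tits's classification of indices of inner type $E_8$ (the only type, since the Dynkin diagram of $\rtE$ has no nontrivial symmetry) together with the known behavior of the Rost invariant under isotropy and scalar extension.

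For (1), I would argue by inspection of the short list of admissible Tits indices for isotropic $E_8$.  Assume that $G$ is isotropic and nonsplit, so its semisimple anisotropic kernel $H_0$ is a nontrivial semisimple $k$-group of rank $\le 7$.  The Rost invariant of $G$ is determined by that of $H_0$ up to the Rost multiplier of the inclusion $H_0 \hookrightarrow G$, which can be read off from the position of the anisotropic kernel's Dynkin diagram inside $\rtE$.  Vanishing of $r_{E_8}(G)$ therefore forces the Rost invariant of $H_0$ to vanish.  For each of the finitely many possibilities for the type of $H_0$, the implication ``anisotropic $+$ zero Rost invariant $\Rightarrow$ trivial'' is known in smaller rank (types $D_4$, $D_6$, $E_6$, $E_7$, etc.), giving a contradiction and hence the claim.

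For (2), first apply (1) over $k(\sqrt{a})$, where $a \in \kx$ defines a quadratic extension killing $r_{E_8}(G)$: isotropy is preserved under scalar extension, so $G_{k(\sqrt{a})}$ is split.  The class $r_{E_8}(G)$ then lies in $\ker\bigl(H^3(k, \Zm2) \to H^3(k(\sqrt{a}), \Zm2)\bigr)$, which by Arason equals $(a) \cdot H^2(k, \Zm2)$; write $r_{E_8}(G) = (a) \cdot \xi$ with $\xi$ of exponent $2$ in the Brauer group.  Since $G$ is isotropic and split by $k(\sqrt{a})$, I would realize $G$ via the construction \eqref{const.map} with $c = a$, and Cor.~\ref{e8const.rost} presents $r_{E_8}(G)$ as a sum of at most four symbols of the form $(a) \cdot [Q_i]$.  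To cut this to three I would exploit the decomposability constraint that $\bigotimes_{i=2}^{4} (Q_i, \binv)$ has trivial discriminant (giving a $\Zm2$-relation among $[Q_2], [Q_3], [Q_4]$ modulo annihilators of $(a)$) together with Prop.~\ref{Q.perm}.

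For (3), the strategy is a case analysis over the finitely many Tits indices of inner $E_8$ of relative rank $\ge 2$; for each index one reads off the anisotropic kernel and checks whether $r_{E_8}(G) \in H^3(k, \Zm4)$ can actually be $2$-torsion, ruling out indices whose anisotropic kernel carries an obligatory $4$-torsion contribution (coming from half-spin Tits classes or from Rost-invariant obstructions of higher-type kernels).  The surviving indices make up Table \ref{E7.index.table}.  I expect the main obstacle to be the symbol-length sharpening in (2) from four to three: pushing below the formal bound produced by Cor.~\ref{e8const.rost} requires extracting a genuine cohomological relation from the nine-parameter data, not merely the naive count from the construction.
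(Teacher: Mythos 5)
Your part (1) argument is essentially the paper's: list the possible types of the semisimple anisotropic kernel (strongly inner $E_7, D_7, E_6, D_5, D_4$), note that the Rost invariant has zero kernel for the corresponding split simply connected groups, and invoke Tits's Witt-type theorem. That part is fine.

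Your part (2), however, has a genuine gap. You claim that an isotropic $G$ of type $E_8$ whose Rost invariant is split by $k(\sqrt a)$ can be realized via the construction \eqref{const.map}. Nothing in the paper gives this; indeed, Theorem \ref{serre.thm} needs a conjecture and odd-degree base changes merely to place a group with odd-order Rost invariant in the image of \eqref{const.map}, and the present claim would be strictly stronger. Even granting it, Cor.~\ref{e8const.rost} yields $(c)\cdot\sum_i [Q_i]$, a sum of \emph{four} symbols, and the proposed use of ``the decomposability constraint'' to drop to three is unsubstantiated: tensoring three quaternion algebras with their canonical symplectic involutions imposes no useful $\Zm2$-relation among the Brauer classes $[Q_2],[Q_3],[Q_4]$. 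The paper's actual route is different and more robust: first prove (3), then use it to reduce (2) to $k$-rank exactly $1$, so the anisotropic kernel is strongly inner simply connected of type $D_7$ or $E_7$; in the $D_7$ case the Rost invariant is the Arason invariant of a $14$-dimensional form in $I^3$, which has symbol length $\le 3$ by \cite[Prop.~2.3]{HT}, and in the $E_7$ case one reduces via Lemma \ref{E6E7} to the quasi-split $E_6$ bound of \cite[p.~321]{Ch:rinv}.

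Your part (3) is under-specified in a way that matters. The exclusion of the ``bad'' case is not about $4$-torsion from half-spin Tits classes: for $k$-rank $\ge 2$ the possible anisotropic kernels are strongly inner simply connected of types $E_6, D_6, D_4$, and the $E_6$ case is excluded because an anisotropic strongly inner $E_6$ forces nonzero $3$-torsion in the Rost invariant, contradicting the hypothesis that $r_{E_8}(G)$ is $2$-torsion. The remaining $D_6$/$D_4$ cases must then be matched with symbol length, and this is precisely Lemma \ref{Spin12} (Witt index of a $12$-dimensional form in $I^3$ versus the symbol length of its Arason invariant), which your outline does not supply. Without that lemma you have at best a dichotomy of Tits indices, not the claimed one-to-one correspondence with symbol lengths $0,1,2$.
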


\begin{table}[ht]
\begin{tabular}{|c|c|} \hline
index&symbol length of $r_{E_8}(G)$\\ \hline
split&$0$ \\ 
\begin{picture}(6.5,1.3)
    \multiput(0.3,.3)(1,0){7}{\circle*{\darkrad}}
    \put(2.3,1){\circle*{\darkrad}}

    \put(0.3,0.3){\line(1,0){6}}
    \put(2.3,1){\line(0,-1){0.7}}
    
    \put(0.3,0.3){\circle{\lrad}}
    \multiput(6.3,.3)(-1,0){3}{\circle{\lrad}}
    
    \end{picture}&$1$ \\
\begin{picture}(6.5,1.3)
    \multiput(0.3,.3)(1,0){7}{\circle*{\darkrad}}
    \put(2.3,1){\circle*{\darkrad}}

    \put(0.3,0.3){\line(1,0){6}}
    \put(2.3,1){\line(0,-1){0.7}}
    
    \put(0.3,0.3){\circle{\lrad}}
    \multiput(6.3,.3)(-1,0){1}{\circle{\lrad}}
    
    \end{picture}&$2$ \\ \hline
\end{tabular}
\caption{Tits index versus symbol length for isotropic groups of type $E_8$ such that $r_{E_8}(G)$ is 2-torsion} \label{E7.index.table}
\end{table}

\begin{borel*} \label{index.context}
Before proving the proposition, we give some context for it.  We consider a group $G$ constructed via \eqref{const.map} from quaternion algebras $Q_1, Q_2, Q_3, Q_4$ and some $c \in \ksq$.  If at least one of the $Q_i$ is split, then $G$ contains a subgroup isomorphic to $\PGL_2$ and so is isotropic.  If at least two of the $Q_i$ are split or a tensor product of some three of them is split, then $G$ contains a subgroup isomorphic to $\PGL_2 \times \PGL_2$ or $\psp_8$ respectively, and so has $k$-rank $\ge 2$.  In any case, 
the Rost invariant of $G$ is zero over $k(\sqrt{c})$, which is either $k$ or a quadratic extension of $k$. 
\end{borel*}

In particular, if one wishes to use \eqref{const.map} to construct groups of type $E_8$ that are non-split but in the kernel of the Rost invariant, then none of the $Q_i$ can be split, nor can any tensor product of three of them.

\begin{eg}
If the quaternion algebras $Q_2, Q_3, Q_4$ are split and $(c) \cdot [Q_1] \ne 0$.  Then $(c) \cdot [Q_1]$ is a symbol in $H^3(k, \Zm2)$ corresponding to a 3-Pfister form $q$.  By Proposition \ref{index}(3), $G$ has semisimple anisotropic kernel $\Spin(q)$.
\end{eg}

\begin{eg} \label{symb.4}
For ``generic" $c$ and quaternion algebras $Q_i$, construction \eqref{const.map} gives a group $G$ of type $E_8$ whose Rost invariant is killed by a quadratic extension of $k$ and has symbol length 4.  The group $G$ is anisotropic by Prop.~\ref{index}.
\end{eg}

We prepare the proof of Prop.~\ref{index} with lemmas on groups of type $D_6$ and $E_7$.

\begin{lem} \label{Spin12}
The Witt index of a $12$-dimensional quadratic form $q \in I^3$ is given by the table:
\[
\begin{tabular}{|c|ccc|} \hline
Witt index of $q$&$0$&$2$&$6$ \\ \hline
symbol length of $e_3(q)$ in $H^3(k, \Zm2)$&$2$&$1$&$0$ \\ \hline
\end{tabular}
\]
\end{lem}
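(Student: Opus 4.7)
The plan is to first restrict the Witt index of $q$ via the Arason-Pfister Hauptsatz, then dispose of the cases of positive Witt index by identifying the anisotropic kernel, and finally handle the anisotropic case via an upper bound from a classical Pfister structure theorem and a lower bound from a Pfister-function-field divisibility theorem.

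Since $q \in I^3 k$ and $\dim q = 12$, applying the Arason-Pfister Hauptsatz to the anisotropic kernel $q_{\mathrm{an}}$ forces $\dim q_{\mathrm{an}} \in \{0, 8, 12\}$, so the Witt index $i(q)$ lies in $\{6, 2, 0\}$. The case $i(q) = 6$ is immediate: $q$ is hyperbolic, so $e_3(q) = 0$ has symbol length $0$. If $i(q) = 2$, then $q_{\mathrm{an}}$ is an $8$-dimensional form in $I^3 k$, hence by Example~\ref{In.egs}(\ref{pf.deg}) similar to an anisotropic $3$-fold Pfister form $\pi$; consequently $e_3(q) = e_3(\pi)$ is a single symbol, and it is nonzero because vanishing of $e_3(\pi)$ would force $\pi$---and hence $q$---to be hyperbolic, contradicting $i(q) = 2$.

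The remaining case is $i(q) = 0$, namely $q$ anisotropic. For the upper bound I would invoke Pfister's structure theorem for low-dimensional forms in $I^3$: every $12$-dimensional form in $I^3 k$ is Witt-equivalent modulo $I^4$ to an orthogonal sum of two scalar multiples of $3$-fold Pfister forms, so $e_3(q)$ is a sum of at most two symbols. For the lower bound I would argue by contradiction, supposing the symbol length is at most $1$. The case $e_3(q) = 0$ is immediate: $q$ would then lie in $I^4 k$, but the Hauptsatz applied to $I^4$ makes any anisotropic form of dimension $< 16$ in $I^4 k$ vanish, contradicting $\dim q = 12$. If instead $e_3(q) = e_3(\pi)$ for some anisotropic $3$-fold Pfister form $\pi$, passing to the function field $k(\pi)$ kills $e_3(\pi)$, so $q_{k(\pi)} \in I^4 k(\pi)$; since $\dim q = 12 < 16$, the Hauptsatz forces $q_{k(\pi)}$ to be hyperbolic. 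By Fitzgerald's theorem---any anisotropic form becoming hyperbolic over the function field of an anisotropic Pfister form $\pi$ must admit $\pi$ as a tensor factor---this requires $\dim \pi = 8$ to divide $\dim q = 12$, a contradiction.

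The main obstacle is locating and correctly applying the two classical ingredients (Pfister's structure theorem for $12$-dimensional forms in $I^3$ and Fitzgerald's divisibility theorem for hyperbolicity over a Pfister function field); once identified, both feed cleanly into the dimension-counting contradiction and the whole statement follows with essentially no further computation.
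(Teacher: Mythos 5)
Your approach is essentially the same as the paper's: restrict the Witt index via Arason--Pfister, identify the anisotropic kernel in the Witt-index-2 case as a multiple of a $3$-Pfister form, and in the anisotropic case push up to the function field of a candidate Pfister form $\pi$ and use the divisibility of $q$ by $\pi$ to force a dimension contradiction. Your explicit appeal to the Pfister structure theorem for the upper bound (symbol length $\le 2$) makes explicit what the paper leaves implicit, and your ``Fitzgerald's theorem'' is what the paper cites as Lam X.4.11.

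There is one imprecision in the opening step. The Arason--Pfister Hauptsatz alone only forces $\dim q_{\mathrm{an}} \in \{0, 8, 10, 12\}$ (it rules out anisotropic dimensions $1$--$7$, and parity rules out odd dimensions), so it does \emph{not} by itself exclude Witt index $1$. To eliminate the case $\dim q_{\mathrm{an}} = 10$ you need Pfister's separate theorem that every $10$-dimensional form in $I^3$ is isotropic; the paper invokes exactly this (``$10$-dimensional forms in $I^3$ are isotropic'', citing Lam XII.2.8) in addition to the Hauptsatz. The rest of your argument is sound and parallels the paper's.
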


\begin{proof}
The Witt index of $q$ cannot be 1 because 10-dimensional forms in $I^3$ are isotropic.  Similarly, it cannot be 3, 4, or 5 by the Arason-Pfister Hauptsatz.  This shows that 0, 2, and 6 are the only possibilities.

The form is hyperbolic if and only if it belongs to $I^4$, i.e., $e_3(q)$ is zero; this proves the last column of the table.  If the Witt index is 2, then $q = \qform{c} \gamma \oplus 2 \H$ for some $c \in \kx$ and anisotropic 3-Pfister form $\gamma$, so $e_3(q)$ is a symbol.  Finally, suppose that $e_3(q)$ has symbol length 1, i.e., $q - \gamma$ is in $I^4$ for some anisotropic 3-Pfister form $\gamma$.  Over the function field $K$ of $\gamma$, the form $q$ is hyperbolic by Arason-Pfister, so $q = \qform{c} \gamma \oplus 2 \H$ for some $c \in \kx$ by \cite[X.4.11]{Lam}.
\end{proof}

In the next lemma, we write $E_7$ for the split simply connected group of that type, and $E_6^K$ for the quasi-split simply connected group of type $E_6$ associated with a quadratic \'etale $k$-algebra $K$.

\begin{lem} \label{E6E7}
There is an inclusion of $E^K_6$ in $E_7$ with Rost multiplier $1$ such that the induced map $H^1(K/k, E^K_6) \ra H^1(K/k, E_7)$ is surjective.
\end{lem}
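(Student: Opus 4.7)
The plan is to realize $E_6^K$ as a $k$-twisted subgroup of the split simply connected $E_7$ using a Levi of a maximal parabolic, and then to derive the cohomological surjectivity by parabolic descent.

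First, inside the split $E_7$ take the maximal parabolic $P$ whose Dynkin-diagram complement is $E_6$; its Levi is $L\cong(E_6^{sc}\times\mathbb{G}_m)/\mu_3$. The inclusion $E_6^{sc}\hookrightarrow E_7$ has Rost multiplier $1$ by a coroot computation in the spirit of Table~\ref{ACD}, since the $E_6$ root system embeds as a sub-root-system of $E_7$. The normalizer quotient $N_{E_7}(L)/L$ has order two, represented by an element $\sigma$ induced from $w_0^{E_7}\cdot w_0^{E_6}$, whose conjugation acts on $L$ by the outer automorphism $\tau$ on $E_6$ and by inversion on the central $\mathbb{G}_m$. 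A subtlety intervenes here: the Chevalley lift of $\sigma$ to the simply connected $E_7$ has order $4$ rather than $2$ (its square is the non-trivial central element of $E_7$), so one cannot naively use $\sigma$ as a Galois cocycle for $K/k$; one must correct it by an element $t\in T(K)$ so that $(\sigma t)\,\iota(\sigma t)=1$ and hence $\sigma t$ is an honest $K/k$-cocycle with values in $E_7$.

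Second, I would use $\sigma t$ to define the embedding. For the construction to land inside the \emph{split} $E_7$, the class of this cocycle in $H^1(K/k,E_7)$ must vanish. I would verify triviality by observing that after correction $\sigma t$ can be placed inside a subgroup of $E_7$ isomorphic to a product of $\SL_2$'s attached to a suitable set of mutually orthogonal roots, where Hilbert~90 produces the required $h\in E_7(K)$ with $h\,\iota(h)^{-1}=\sigma t$. Granting this, the twisted Levi $L_\zeta\cong(E_6^K\times R^{(1)}_{K/k}\mathbb{G}_m)/\mu_3$ is a $k$-subgroup of the split $E_7$, and its derived group provides $E_6^K\hookrightarrow E_7$; the Rost multiplier $1$ is preserved by twisting.

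For the surjectivity $H^1(K/k,E_6^K)\twoheadrightarrow H^1(K/k,E_7)$: given $\alpha\in H^1(K/k,E_7)$, the twist $G=(E_7)_\alpha$ is an inner $K/k$-form of $E_7$ (since $E_7$ has no outer automorphisms). The $E_7(K)$-conjugacy class of the parabolic $P_\zeta$ is preserved by the twisted Galois action and therefore descends to a $k$-parabolic $P^\alpha\subset G$. Parabolic descent (the unipotent radical has trivial $H^1$) expresses $\alpha$ via a class in $H^1(K/k,L^\alpha)$ for the $k$-Levi, which is a $K/k$-form of $L_\zeta$; the contribution from the central $R^{(1)}_{K/k}\mathbb{G}_m$ becomes trivial when pushed into $H^1(K/k,E_7)$ by the same $\SL_2$-subgroup argument, so $\alpha$ is the image of a class from $H^1(K/k,E_6^K)$. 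The principal obstacle is the delicate interplay between the order-$4$ Chevalley lift and the desired order-$2$ cocycle behavior; once the coboundary verification for $\sigma t$ is established (e.g.\ by a direct Steinberg-generator computation analogous to the proof of Lemma~\ref{rinv.crux}), both the existence of the inclusion and the surjectivity follow essentially formally.
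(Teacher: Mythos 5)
Your plan correctly identifies the right subgroup (the $E_6$-Levi of the relevant maximal parabolic of $E_7$) and the central obstruction (the order-4 lift of the normalizing element), but the execution differs from the paper's in ways that leave genuine gaps.

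For the \emph{construction} of the inclusion, the paper avoids the correction-by-$t$ computation entirely: it uses the concrete model of $E_7$ acting on the $56$-dimensional space $\left(\begin{smallmatrix}k&J\\J&k\end{smallmatrix}\right)$ for $J$ the split Albert algebra, and an explicit order-$4$ automorphism $\left(\begin{smallmatrix}\alpha&x\\y&\beta\end{smallmatrix}\right)\mapsto\left(\begin{smallmatrix}i\beta&iy\\ix&i\alpha\end{smallmatrix}\right)$ furnishing an embedding $\mmu4\hookrightarrow E_7$ (this is exactly your order-$4$ element, already normalized to lie in a subgroup with trivial $H^1$). Twisting by a cocycle in $\mmu4$ mapping to the class of $K$ in $H^1(k,\mmu2)$ then gives $(E_7)_\nu\cong E_7$ for free, with $E^K_6$ sitting inside the twisted Levi $S_\nu$. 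Your proposal of building the cocycle as a Steinberg-generator product and invoking Hilbert 90 in an $\SL_2$-product is plausible but unfinished; it is precisely the hard part of the existence claim and you don't carry it out.

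For \emph{surjectivity}, your argument has a genuine error. The claim that because ``the $E_7(K)$-conjugacy class of $P_\zeta$ is preserved by the twisted Galois action'' the parabolic ``descends to a $k$-parabolic $P^\alpha\subset G$'' is false: Galois-stability of the conjugacy class of parabolics only makes the flag variety a $k$-variety; it does not produce a $k$-rational point, which is exactly the isotropy condition $G$ may fail. (Indeed Lemma~\ref{E6E7} is applied in the proof of Prop.~\ref{index}(2) to groups whose semisimple anisotropic kernel is of type $E_7$, where no such $k$-parabolic exists.) The paper's argument is different and correct: the $k$-subgroup one uses is not the parabolic $P$ but rather $S_\nu=P\cap\iota(P)$ for a \emph{$K$}-parabolic $P$ of $(E_7)_\nu$ (it is $k$-defined because $\iota$ swaps $P$ and $\iota(P)$), and the surjectivity $H^1(K/k,S_\nu)\to H^1(K/k,E_7)$ is a cited theorem from Platonov--Rapinchuk; this is not the same as parabolic descent.

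Finally, the step descending from the Levi to $E^K_6$ (killing the $R^1_{K/k}(\Gm)$ factor) is handwaved in your proposal. In the paper this is the most delicate point: one chooses a specific $k$-rational section $s$ of $S_\nu\to R^1_{K/k}(\Gm)$ whose image lies in a parabolic $Q$ of $(E_6)_\nu$, twists by $s\pi(\eta')$, observes the resulting cocycle takes values in the derived group $D$, and then uses $Q\subset D$ to conclude $D$ is quasi-split rather than some other $K/k$-form. Merely knowing the torus contribution dies in $H^1(K/k,E_7)$ does not by itself show that $\alpha$ lifts to $H^1(K/k,E^K_6)$.
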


A class $\eta \in H^1(k, E_7)$ is split by $K$ if and only if $K$ kills the Rost invariant $r_{E_7}(\eta)$ by \cite{G:rinv}.  It follows from \cite[3.6]{G:rinv} that there is some quadratic \'etale $k$-algebra $L$ such that $\eta$ is in the image of $H^1(K/k, E^L_6) \ra H^1(K/k, E_7)$.  The point of the lemma is to arrange that $L = K$.

\begin{proof}[Proof of Lemma \ref{E6E7}]
We view $E_7$ as the identity component of the group preserving a quartic form on the 56-dimensional vector space $\stbtmat{k}{J}{J}{k}$ for $J$ the split Albert algebra, cf.~\cite{G:struct}.  Write $S$ for the subgroup of $E_7$ that stabilizes the subspaces $\stbtmat{k}{0}{0}{0}$ and $\stbtmat{0}{0}{0}{k}$; it is reductive and its derived subgroup is the split simply connected group of type $E_6$.

As in \cite[3.5]{G:rinv}, for $i$ a square root of $-1$, the map
\[
\stbtmat{\alpha}{x}{y}{\beta} \mapsto \stbtmat{i\beta}{iy}{ix}{i\alpha}
\]
is a $k(i)$-point of $E_7$, and this gives an inclusion $\mmu4 \injects E_7$.  Twisting $E_7$ by a 1-cocycle $\nu \in Z^1(k, \mmu4)$ that maps to the class of $K$ in $H^1(k, \mmu2)$, we find inclusions
\[
E^K_6 = (E_6)_\nu \subset S_\nu \subset (E_7)_\nu \cong E_7.
\]
Write $\iota$ for the nontrivial $k$-automorphism of $K$.  The group $S_\nu$ is the intersection $P \cap \iota(P)$ for a maximal parabolic $K$-subgroup $P$ of $(E_7)_\nu$, hence the map
$H^1(K/k, S_\nu) \ra H^1(K/k, E_7)$ is surjective \cite[pp.~369, 383]{PlatRap}.

We have an exact sequence
\[
\begin{CD}
1 @>>> E^K_6 @>>> S_\nu @>\pi>> R^1_{K/k}(\Gm) @>>> 1
\end{CD}
\]
where $\pi$ is the map that sends the endomorphism
\[
\stbtmat{\alpha}{x}{y}{\beta} \mapsto \stbtmat{\mu \alpha}{f(x)}{f^\dagger(y)}{\mu^{-1} \beta}
\]
to $\mu$.  Formula \cite[1.6]{G:struct} gives an explicit splitting $s$ of $\pi$ defined over $k$ such the image of $s$ is contained in the parabolic subgroup $Q$ of $(E_6)_\nu$ stabilizing the  subspace $W$ from \cite[6.8]{G:struct}.

Fix an element $\eta' \in H^1(K/k, S_\nu)$ that maps to $\eta \in H^1(K/k, E_7)$.  We twist $S_\nu$ by $s\pi(\eta')$.  The image of $\eta''$ of $\eta'$ in $H^1(K/k, (S_\nu)_{s\pi(\eta')})$ under the twisting map takes values in the semisimple part $D := ((E_6)_\nu)_{s\pi(\eta')}$.  But $D$ contains the $k$-parabolic  $Q$, hence $D$ is quasi-split or has semisimple anisotropic kernel a transfer $R_{K/k}(H)$ where $H$ is anisotropic of type $^1\!A_2$.  But this is impossible because $D$ is split by a quartic extension of $k$, so $D$ is the quasi-split group $E^K_6$.
\end{proof}

\begin{proof}[Proof of Prop.~\ref{index}]
Statement (1) is standard, so we only sketch the proof.  The semisimple anisotropic kernel of an isotropic but non-split group of type $E_8$ is a simply connected group of type $E_7, D_7, E_6, D_5$, or $D_4$ by \cite[p.~60]{Ti:Cl}, but the Rost invariant has zero kernel for a split group of that type \cite{G:rinv}.  Statement (1) now follows by Tits's Witt-type theorem \cite[2.7.2(d)]{Ti:Cl}.

For (3), we may assume that $G$ is not split, equivalently that $r_{E_8}(G)$ has positive symbol length.  Because the $k$-rank of $G$ is at least 2, Tits's table in \cite[p.~60]{Ti:Cl} shows that the semisimple anisotropic kernel of $G$ is a strongly inner simply connected group of type $E_6, D_6$, or $D_4$.  The first case is impossible because $r_{E_8}(G)$ is 2-torsion.  Statement (3) now follows from Tits's Witt-type theorem and Lemma \ref{Spin12}.

To prove (2), by (3) we may assume that $G$ has $k$-rank 1, hence that the semisimple anisotropic kernel of $G$ is a strongly inner simply connected group of type $D_7$ or $E_7$.  In the first case, $r_{E_8}(G)$ is the Arason invariant of a 14-dimensional form in $I^3$, hence has symbol length $\le 3$ by \cite[Prop.~2.3]{HT}.  For the second case, by Lemma \ref{E6E7} it suffices to prove that the Rost invariant of every element of $H^1(K/k, E^K_6)$ has symbol length at most 3, which is \cite[p.~321]{Ch:rinv}.
\end{proof}

Presumably the methods of \cite{Ch:rinv} can be used to give an alternative proof of Prop.~\ref{index}(2) that avoids Lemma \ref{E6E7}.

\section{Reduced Killing form up to Witt-equivalence} \label{kappa.sec}

Recall that the \emph{reduced Killing form} of $G$ --- which we denote by $\rkill_G$ --- is equal to the usual Killing form divided by twice the dual Coxeter number \cite[\S5]{GrossNebe}.  For a group $G$ of type $E_8$, all the roots of $G$ have the same length and the dual Coxeter number equals the (usual) Coxeter number, which is 30.  Hence the usual Killing form $\kill_G$ satisfies $\kill_G = 60\, \rkill_G$ and $\kill_G$ is zero in characteristics 2, 3, 5.

We identify the bilinear form $\rkill_G$ with the quadratic form (and element of the Witt ring) $x \mapsto \rkill_G(x,x)$.

\begin{eg}[the split group] \label{split.rkill}
The reduced Killing form for the split group $E_8$ of that type is Witt-equivalent to the 8-dimensional form $\qform{1, 1, \ldots, 1}$, which we denote simply by 8.  To see this, note that the positive root subalgebras span a totally isotropic subspace parallel to the isotropic subspace spanned by the negative root subalgebras, so $\rkill_{E_8}$ is Witt-equivalent to its restriction to the Lie algebra of a split maximal torus.  By \cite{GrossNebe}, this restriction is isomorphic to the quadratic form $x \mapsto xCx^t$ for $C$ the Cartan matrix of the root system, and it is easy to check that this quadratic form is 8.
\end{eg}

\begin{eg}[Tits's groups] \label{Tits.rkill}
For a group $G$ of type $E_8$ obtained from Tits's construction as in \ref{Tits.const}, we have
\[
\rkill_G = \qform{2} \left[ 8 - \left( 4 \gamma_3 + 4 \phi_3 + \qform{2} \gamma_3 (\phi_5 - \phi_3)  \right) \right]
\]
by \cite[p.~117, (144)]{Jac:ex}, where the Killing forms for the subalgebras of type $F_4$ and $G_2$ are given in \cite[27.20]{Se:ci}.
\end{eg}

\begin{borel*} \label{kappa.def}
The map $G \mapsto \qform{2} (\rkill_{E_8} - \rkill_G)$ defines a Witt-invariant of $H^1(*, E_8)$ in the sense of \cite[\S27]{Se:ci}, i.e., a collection of maps 
\[
\kappa \!: \fbox{\parbox{1.5in}{groups of type $E_8$ over $K$}} \ra W(K)
\]
for every extension $K/k$ (together with some compatibility condition), where $W(K)$ denotes the Witt ring of $K$. 
\end{borel*}

\begin{eg}[groups over $\R$] \label{real.eg0}
For each of the three groups of type $E_8$ over the real numbers, we list the Rost invariant, the (signature of the) Killing form,  and the value of $\kappa$.  All three groups are obtained by Tits's construction \cite[p,~121]{Jac:ex}, so the Killing form and Rost invariant are provided by the formulas in \ref{Tits.rkill} and \ref{Tits.const}.
The Rost invariant $r_{E_8}$ takes values in $H^3(\R, \QZt) = H^3(\R, \Zm2) = \Zm2$.
\[
\begin{tabular}{|c|ccc|} \hline
$G$&$r_{E_8}(G)$&$\kill_G$&$\kappa(G)$\\ \hline
split&0&8&0 \\
other&1&$-24$&$32 \in I^5$\\
anisotropic/compact&0&$-248$&$256 \in I^8$\\ \hline
\end{tabular}
\]
\end{eg}

The rest of this section is concerned with the question over the field $k$: \emph{In what power of $I$ does $\kappa(G)$ lie?}

\begin{lem} \label{inI3}
For every group $G$ of type $E_8$, $\kappa(G)$ belongs to $I^3$.
\end{lem}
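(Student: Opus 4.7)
The plan is to realize $\rkill_G$ as a twist of $\rkill_{E_8}$ by a Galois cocycle that factors through $\Spin$, whereupon the conclusion drops out of Merkurjev's description of $I^3$ in terms of dimension, discriminant, and Clifford invariant.

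First, $\rkill_{E_8}$ itself lies in $I^3$: by Example~\ref{split.rkill} it is Witt-equivalent to the $8$-dimensional form $\qform{1,1,1,1,1,1,1,1}$, which is a $3$-fold Pfister form. Second, the adjoint representation $\mathrm{Ad}\colon E_8 \to \mathrm{GL}(\fe_8)$ preserves the reduced Killing form and, $E_8$ being connected, actually lands in $\SO(\rkill_{E_8})$. Because $E_8$ is simply connected, the pullback of the double cover $\Spin(\rkill_{E_8}) \to \SO(\rkill_{E_8})$ along $\mathrm{Ad}$ is a central extension of $E_8$ by $\mmu2$ and so must split; hence $\mathrm{Ad}$ lifts to a homomorphism $E_8 \to \Spin(\rkill_{E_8})$.

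Now write $G = (E_8)_\eta$ for some $\eta \in H^1(k, E_8)$. By construction $\rkill_G$ is the twist of $\rkill_{E_8}$ by the image $\bar\eta$ of $\eta$ in $H^1(k, \SO(\rkill_{E_8}))$. Since $\bar\eta$ lifts to $H^1(k, \Spin(\rkill_{E_8}))$, the difference $\rkill_G - \rkill_{E_8}$ has both trivial discriminant (the twist comes from $\SO$) and trivial Clifford invariant (because the connecting map $H^1(\SO) \to H^2(\mmu2)$ arising from $1 \to \mmu2 \to \Spin \to \SO \to 1$ is precisely the Clifford invariant of the twist, and it vanishes on classes lifting to $\Spin$). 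Merkurjev's theorem then yields $\rkill_G - \rkill_{E_8} \in I^3$, and since multiplication by the unit $\qform{2} \in W(k)$ preserves $I^3$, we conclude $\kappa(G) = \qform{2}(\rkill_{E_8} - \rkill_G) \in I^3$.

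The step I expect to need the most care is the lift of $\mathrm{Ad}$ through $\Spin(\rkill_{E_8})$ via the simple-connectedness of $E_8$; the identification of the $\SO$-twist of $\rkill_{E_8}$ by $\bar\eta$ with $\rkill_G$ is standard Galois descent on the adjoint representation. Note that this argument applies verbatim to any simply connected simple algebraic group whose reduced Killing form already lies in $I^3$; the sole input specific to $E_8$ is the $8$-dimensional Witt-equivalence recorded in Example~\ref{split.rkill}.
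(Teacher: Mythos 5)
Your proof is correct and is essentially the same argument as the paper's: both lift the adjoint representation through $\Spin(\rkill_{E_8})$ using that $E_8$ is simply connected, and then conclude via the standard fact that classes in the image of $H^1(k,\Spin(q))$ yield forms agreeing with $q$ in dimension, discriminant, and Clifford invariant. The paper simply compresses your explicit Merkurjev step into a citation to \cite[p.~437]{KMRT}.
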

\begin{proof}
Because $E_8$ is simply connected, the adjoint representation $E_8 \ra \O(\rkill_{E_8})$ lifts to a homomorphism $E_8 \ra \Spin(\rkill_{E_8})$.  For $G$ a group of type $E_8$ over an extension $K/k$, 
the reduced Killing form of $G$ is the image of $G$ under the composition
\[
H^1(K, E_8) \ra H^1(K, \Spin(\rkill_{E_8})) \ra H^1(K, \O(\rkill_{E_8})),
\]
so it belongs to $I^3 K$, see e.g.~\cite[p.~437]{KMRT}.
\end{proof}

\begin{lem} \label{inI4}
For every group $G$ of type $E_8$, we have $30r_{E_8}(G) = e_3(\kappa(G))$.  The form $\kappa(G)$ is in $I^4$ if and only if $30r_{E_8}(G) = 0$.
\end{lem}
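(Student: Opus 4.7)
The plan is to factor the Rost invariant of $G$ through the spin group of the reduced Killing form. Since $E_8$ is simply connected, the adjoint representation $E_8 \to \O(\rkill_{E_8})$ lifts to a homomorphism $\varphi \!: E_8 \to \Spin(\rkill_{E_8})$, and for $\eta \in H^1(k, E_8)$ representing $G$, the class $\varphi_*(\eta)$ maps further to the form $\rkill_G$ in $H^1(k, \O(\rkill_{E_8}))$. The key numerical input is that the Rost multiplier of $\varphi$ equals $30$: the Dynkin index of the adjoint representation of a simply connected absolutely simple group equals twice the dual Coxeter number, giving $2h^\vee = 60$ for $E_8$; and this drops to $30$ when one passes from $\SL_{248}$ to $\Spin_{248}$, because the pullback of $c_2$ of the standard representation of $\SL$ to $B\Spin_n$ is twice the basic class.

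Granted this multiplier, one combines it with the standard formula for the Rost invariant of a Spin group: if $\gamma \in H^1(k, \Spin(q_0))$ has image a form $q$ in $H^1(k, \O(q_0))$, then automatically $q - q_0 \in I^3$, and $r_{\Spin(q_0)}(\gamma) = e_3(q - q_0)$ in $H^3(k, \Zm2)$. Applied to $\gamma = \varphi_*(\eta)$, this gives
\[
30\,r_{E_8}(G) \;=\; e_3(\rkill_G - \rkill_{E_8}) \quad \in H^3(k, \Zm2).
\]
To convert this into the form $\kappa(G) = \qform{2}(\rkill_{E_8} - \rkill_G)$: for $\phi \in I^3$, $\qform{2}\phi - \phi = (\qform{2}-\qform{1})\phi$ lies in $I \cdot I^3 = I^4$, so $e_3(\qform{2}\phi) = e_3(\phi)$; and $e_3$ takes values in 2-torsion so the sign is irrelevant. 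Hence $e_3(\kappa(G)) = e_3(\rkill_G - \rkill_{E_8}) = 30\,r_{E_8}(G)$, which is the first assertion. The second is then immediate from Arason's injectivity theorem: $e_3 \!: I^3/I^4 \hookrightarrow H^3(k,\Zm2)$, so $\kappa(G) \in I^4$ iff $e_3(\kappa(G))=0$ iff $30\,r_{E_8}(G) = 0$.

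The main obstacle is the Rost-multiplier computation in the first step. An alternative verification goes through the subgroup $\Spin_{16} \subset \hspin_{16} \subset E_8$, whose inclusion has Rost multiplier $1$ by \eqref{rostmult}. The adjoint representation of $E_8$ restricts to $D_8$ as $\mathrm{ad}_{D_8}$ plus a half-spin representation (dimensions $120 + 128 = 248$); using additivity of Dynkin indices under direct sum and the known Dynkin index of the vector and half-spin representations of $\Spin_{16}$, one checks that the composition $\Spin_{16} \to \Spin(\rkill_{E_8})$ has Dynkin index consistent with $d(\varphi) = 30$.
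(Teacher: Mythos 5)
Your proof is correct and follows essentially the same route as the paper: factor the adjoint representation through $\Spin(\rkill_{E_8}) \to \SL(\rkill_{E_8})$, note the Dynkin index of the adjoint representation is $60$ while the Rost multiplier of $\Spin \to \SL$ is $2$, and conclude the Rost multiplier of $E_8 \to \Spin(\rkill_{E_8})$ is $30$. The extra bookkeeping you supply (passing from $\rkill_G - \rkill_{E_8}$ to $\kappa(G)$ via $\qform{2}\phi - \phi \in I^4$, and invoking $e_3$-injectivity on $I^3/I^4$) is exactly what the paper leaves implicit in ``the first claim follows'' and ``the kernel of $e_3$ is $I^4$.''
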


\begin{proof}
We have a commutative diagram, where $\fe_8$ denotes the Lie algebra of $E_8$ and arrows are labeled with Rost multipliers:
\[
\xymatrix{
E_8 \ar[r] \ar[dr]^-{60} & \Spin(\fe_8) \ar[d]^-2 \\
&\SL(\fe_8)
}
\]
Therefore, the Rost multiplier of the top arrow is $60/2 = 30$ \cite[p.~122]{MG}.  The first claim follows.  The second claim amounts to the observation that the kernel of $e_3$ is $I^4$.
\end{proof}

Because the Rost invariant $r_{E_8}$ has order 60 \cite[16.8]{MG}, the value of $r_{E_8}$ on a versal $E_8$-torsor $G$ has order 60 \cite[12.3]{Se:ci}.  
In particular, $\kappa(G)$ does not belong to $I^4$, so Lemma \ref{inI3} cannot be directly strengthened.  

We return to the question ``In what power of $I$ does $\kappa(G)$ lie?" in \S\ref{conj.sec} below.

\section{Calculation of the Killing form} \label{kill.sec}

We now compute the Killing form of a group $G$ of type $E_8$ constructed via \eqref{const.map}.  

\begin{thm}[$\chr k \ne 2, 3$] \label{E8.kill}
Let $G$ be a group of type $E_8$ constructed from quaternion algebras $Q_1, Q_2, Q_3, Q_4$ and $c \in \ksq$ as in \eqref{const.map}. Then the reduced Killing form of $G$ is isomorphic to
\[
8 \qform{2c} - \qform{2}\qform{1, -c} \left( \sum_i Q'_i + \sum_{i < j < \ell} Q'_i Q'_j Q'_\ell \right).
\]
\end{thm}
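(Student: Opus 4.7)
The plan is to compute $\rkill_G$ by restricting the adjoint representation $\fe_8$ to the subgroup $\PGL_2^{\times 4} \times \mmu2 \subset E_8$ of \eqref{const.emb} and reading off the induced decomposition of the invariant bilinear form summand by summand. Because $\rkill$ descends through the adjoint representation, it is compatible with twisting by cocycles in $H^1(k, \PGL_2^{\times 4} \times \mmu2)$: it therefore suffices to decompose $\fe_8$ as a representation of this subgroup over the algebraic closure, identify the invariant form on each constituent, and then trace through the effect of twisting by the cocycle representing $(Q_1, Q_2, Q_3, Q_4, c)$.

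First I would split $\fe_8$ using the $\mmu2$-eigenspaces: $\fe_8 = \mathfrak{d}_8 \oplus S_+$, where $\mathfrak{d}_8$ is the Lie algebra of $\hspin_{16}$ (the $+1$-eigenspace) and $S_+$ is the half-spin representation (the $-1$-eigenspace). The tensor product embeddings of \eqref{two.iso} and Table \ref{PGL3} realize the vector representation of $\Spin_{16}$, over an algebraic closure, as $V = V_1 \otimes V_2 \otimes V_3 \otimes V_4$ with $V_i$ the standard representation of $\SL_2^{(i)}$. Iterated application of $\Lambda^2(P \otimes Q) \cong \Lambda^2 P \otimes S^2 Q \,\oplus\, S^2 P \otimes \Lambda^2 Q$ yields
\[
\mathfrak{d}_8 \cong \Lambda^2 V \cong \bigoplus_{i=1}^{4} S^2 V_i \;\oplus\; \bigoplus_{|I| = 3} \bigotimes_{i \in I} S^2 V_i,
\]
in which each $S^2 V_i$ is the adjoint of $\SL_2^{(i)}$. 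A parallel calculation, using the explicit polarization of $V$ afforded by the matrix form $\s'_{16}$ of \eqref{two.iso}, produces the analogous decomposition of $S_+$ into external tensor products of the $V_i$ with appropriate multiplicities.

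Next I would twist by $(Q_1, Q_2, Q_3, Q_4, c)$ and assemble. For each $i$, twisting the $i$-th $\PGL_2$ by $Q_i$ replaces the adjoint summand $S^2 V_i$ by the trace-zero subspace $Q_i^0$, equipped up to scalar with the pure quaternion form $Q'_i$; each triple tensor product $\bigotimes_{i \in I} S^2 V_i$ twists accordingly to a form proportional to $\bigotimes_{i \in I} Q'_i$. On the $\mmu2$-odd piece $S_+$ the twist by $c$ contributes an additional factor of $\qform{1, -c}$. After combining the $\mathfrak{d}_8$ and $S_+$ contributions via the identity $\qform{2} - \qform{2c} = \qform{2}\qform{1, -c}$ in $W(k)$, the pure-quaternion terms collect into the factor $\qform{2}\qform{1, -c}$, while the $\PGL_2^{\times 4}$-invariant summands combine into $8\qform{2c}$, matching Example~\ref{split.rkill} when $(Q_1, \ldots, Q_4, c)$ is trivial. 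Summing the contributions over $I \subseteq \{1, 2, 3, 4\}$ produces the stated formula.

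The main obstacle will be the half-spin summand: isolating the $\SL_2^{\times 4}$-decomposition of $S_+$ precisely enough to extract the multiplicity of each $\bigotimes_{i \in I} Q'_i$, and keeping track of the Dynkin-index-type scalars by which the trace forms on individual subrepresentations differ from the restriction of $\rkill$. As an independent consistency check, Cor.~\ref{e8const.rost} together with Lemma \ref{inI4} forces $e_3(\kappa(G)) = 30\, r_{E_8}(G) = 0$, so the final formula must yield a $\kappa(G) \in I^4$; this should be directly verifiable from the stated expression.
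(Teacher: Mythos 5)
Your overall strategy — restrict the adjoint representation to the subgroup, decompose over the algebraic closure, identify the invariant form on each irreducible constituent, and twist — is the same as the paper's (§\ref{kill.sec}), and your treatment of the adjoint summand $\mathfrak{d}_8 \cong \Lambda^2 V$ via $\Lambda^2(V_1 \otimes V_2 \otimes V_3 \otimes V_4) = \bigoplus_i S^2 V_i \oplus \bigoplus_{|I|=3} \bigotimes_{i \in I} S^2 V_i$ matches the paper's computation in \ref{D8.kill} (which phrases it as $\sigma$-skew elements of a tensor product of quaternion algebras, but this is the same decomposition).

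The gap is in your description of the half-spin summand $S_+$, and it is not just a matter of filling in details. You claim $S_+$ decomposes into ``triple tensor products $\bigotimes_{i\in I} S^2 V_i$'' plus ``$\PGL_2^{\times 4}$-invariant summands,'' and that the latter produce the $8\qform{2c}$. Both halves of this are wrong. Over $\bar k$, $S_+$ restricted to $\SL_2^{\times 4}$ decomposes as the four $27$-dimensional pieces $\bigotimes_{i \in I} S^2 V_i$ ($|I|=3$) \emph{plus} four $5$-dimensional pieces $S^4 V_i$ with highest weight $4\omega_i$; there are no trivial subrepresentations ($4\cdot 27 + 4\cdot 5 = 128$, with nothing left over). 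The $S^4 V_i$ pieces are essential: after twisting, the invariant form on such a piece is $\qform{c}(\qform{2}Q_i + \qform{6})$ (the paper invokes \cite{G:A1} here), and it is the $\qform{2}Q'_i$ inside $\qform{2}Q_i$ that, combined with the $\qform{-2}Q'_i$ from $\mathfrak{d}_8$, produces the factor $-\qform{2}\qform{1,-c}$ on $\sum_i Q'_i$; meanwhile the leftover $\qform{2}+\qform{6}$ parts (not any trivial summand) give $4\qform{2c}+4\qform{6c} \cong 8\qform{2c}$. Your plan does not see these pieces, so it cannot recover the $\sum_i Q'_i$ coefficient or the constant term. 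Finally, you correctly flag the problem of pinning down the overall scalars relating trace forms to $\rkill$, but you leave it open; the paper resolves it by explicit Chevalley-basis computations plus evaluation at $k=\R$, and without some such normalization the signs $m_2, m_4$ (which your identity $\qform{2}-\qform{2c}=\qform{2}\qform{1,-c}$ silently assumes to be $+1$) are not determined.
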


Here we have written $Q'_i$ for the unique 3-dimensional quadratic form such that $\qform{1} \oplus Q'_i$ is the norm on $Q_i$.

We prove the theorem by restricting the adjoint representation $\g$ of $G$ to the subgroup $\hspin\As$ for $\As = \otimes (Q_i, \binv)$.
We can compute the restriction of $G$ to $\hspin\As$ over an algebraic closure, where we find that $\g$ is a direct sum of the adjoint representation of $\hspin_{16}$ and the natural half-spin representation \cite[p.~305]{McKP}. Both representations of $\hspin_{16}$ are irreducible: the half-spin representation because it is minuscule; for the adjoint representation see \cite[2.6]{St:aut}.  As the reduced Killing form $\rkill_G$ is invariant under $\hspin_{16}$, it follows that these two irreps of $\hspin_{16}$ in $\g$ are orthogonal relative to $\rkill_G$.  We compute the restriction of the reduced Killing form to each summand separately.  

(We use the reduced Killing form instead of the usual one, in order to get a nontrivial result in characteristic 5.  The statement of the theorem makes sense in characteristic 3, but our proof does not work in that case.)

\begin{borel}{Killing form of $\hspin\As$} \label{D8.kill}
The Lie algebra of $\hspin\As$ can be identified with the space of $\s$-skew-symmetric elements in $A$.  Because $\As$ is a tensor product of quaternion algebras, this space can be described inductively as a tensor product of the subspaces of the $Q_i$ that are symmetric or skew-symmetric under $\binv$.  (It is clear that such tensor products can be formed that belong to the skew elements in $A$, and dimension count shows that all skew elements of $A$ are obtained in this way.)  The trace quadratic form $q \mapsto \tr_{Q_i}(q^2)$ restricts to be $\qform{2}$ on the $\binv$-symmetric elements and $\qform{-2} Q'_i$ on the $\binv$-skew-symmetric elements.  We conclude that the form $a \mapsto \tr_A(a^2)$ on $A$ restricts to 
\begin{equation} \label{D8.1}
\qform{-1} \left( \sum_i Q'_i + \sum_{i < j < \ell} Q'_i Q'_j Q'_\ell \right)
\end{equation}
on the $\s$-skew-symmetric elements.

The form \eqref{D8.1} is invariant under $\hspin\As$, so it is a scalar multiple of the Killing form.  By \cite[chap.~VIII, \S13, Exercise 12]{Bou:g7}, the Killing form of $\hspin\As$ is $\qform{h_{D_8}}$ times the form \eqref{D8.1}, where $h_{D_8}$ denotes the Coxeter number, which is 14.  

Note that for $X_\alpha, X_{-\alpha}$ belonging to our fixed pinning of $E_8$ and spanning the highest and lowest root subalgebras of $\hspin_{16}$, we have
$\kill_{\hspin_{16}}(X_\alpha, X_{-\alpha}) = 2h_{D_8}$, but $\kill_{E_8}(X_\alpha, X_{-\alpha}) = 2 h_{E_8}$ by \cite[pp.~E-14, E-15]{SpSt}, where $h_{E_8}$ is the Coxeter number of $E_8$, i.e., 30.  We conclude that the restriction of the Killing form of $E_8$ to the adjoint representation of $\hspin\As$ is $\qform{h_{E_8}}$ times the form \eqref{D8.1}, i.e., the reduced Killing form of $G$ restricts to be $\qform{2}$ times \eqref{D8.1} on the Lie algebra of $\hspin_{16}$.
\end{borel}

\begin{borel}{Half-spin representation} \label{D8.half}
We restrict the half-spin representation of $\hspin\As$ to the product of the $\PGL(Q_i)$'s.  Putting $\omega_i$ for the unique fundamental dominant weight of $\PGL(Q_i)$, the half-spin representation decomposes as a direct sum of the four 5-dimensional representations with highest weight $4\omega_i$ and the four 27-dimensional representations with highest weight $2\omega_i + 2\omega_j + 2\omega_\ell$ with $i < j < \ell$.

Now the representations of $\PGL(Q_i)$ with highest weights $2\omega_i$ and $4\omega_i$ support $\PGL(Q_i)$-invariant quadratic forms isomorphic to 
\[
\qform{2} Q'_i \quad \text{and} \quad \qform{2} Q_i + \qform{6}
\]
respectively by \cite{G:A1}.  (The cited result concerns Weyl modules, but the modules with these highest weights are irreducible in characteristic $\ne 2, 3$.)  Because the $Q_i$ are interchangeable (Prop.~\ref{Q.perm}), the half-spin representation contributes
\begin{equation} \label{D8.2}
\qform{2} \qform{c\, m_2} \sum_{i < j < \ell} Q'_i Q'_j Q'_\ell \oplus \qform{c\, m_4} \sum_i ( \qform{2} Q_i + \qform{6})
\end{equation} 
to the reduced Killing form of $G$, where $m_2, m_4$ are elements of $\kx$ that are not yet determined.  

We first compute $m_4$ up to sign, for which it suffices to consider the case $c = 1$.  As the $Q_i$'s are interchangeable, we focus on $Q_1$.  The only root of $E_8$ that restricts to $4 \omega_1$ is $-(\e_1 + \e_2 + 2\e_3 + 2\e_4 + \e_5)$.  We write $X_1$ for the element of the Chevalley basis of $E_8$ corresponding to that root and $X_{-1}$ for the element corresponding to the negation of the root; $X_1$ and $X_{-1}$ are highest and lowest weight vectors respectively for the irrep of $\PGL_2^{\times 4}$ with highest weight $4\omega_1$.  We have
\[
\stbtmat{0}{1}{-1}{0} X_1 = \pm X_{-1}
\]
because all the roots in $E_8$ have the same length.  Therefore,
\[
\rkill_{E_8}(X_1, \stbtmat{0}{1}{-1}{0} X_1) = \pm \rkill_{E_8}(X_1, X_{-1}) = \pm 1.
\]
On the other hand, for $f$ the symmetric bilinear form on the representation of $\PGL(Q_1)$ with highest weight $4 \omega_1$ such that $f$ is isomorphic to $\qform{2} Q_1 + \qform{6}$, we have
\[
f(X_1, \stbtmat{0}{1}{-1}{0} X_1) = 1,
\]
see \cite[2.4]{G:A1}.
Consequently, $m_4 = \pm 1$.  Similarly, $m_2$ is also 1 or $-1$, possibly with a different sign from $m_4$.
\end{borel}

\begin{borel*}
Combining the results of \ref{D8.kill} and \ref{D8.half}, we find that the reduced Killing form of $G$ is
\begin{equation} \label{D8.mu}
4 \qform{2c\,m_4} \qform{1,3} + \qform{2} \qform{-1, c\,m_4} \sum Q'_i + \qform{2} \qform{-1, c\,m_2} \sum Q'_i Q'_j Q'_\ell.
\end{equation}

As $m_2$ and $m_4$ are $\pm 1$ and are defined over $\Z$, to compute their values in $\ksq$ it suffices to compute their values in $\R^{\times} / \R^{\times 2}$.  Consider the case $k = \R$ and $c = 1$ and where exactly one of the $Q_i$ is nonsplit.  Then 
\[
\sum Q'_i = 0 \eand \sum Q'_i Q'_j Q'_\ell = 8.
\]
Plugging this data into \eqref{D8.mu}, we find that the reduced Killing form is
$8\qform{m_2, m_4} - 8$.
On the other hand, the group is isotropic and has Rost invariant zero, so it is split and the reduced Killing form is 8.  It follows that $\qform{m_2, m_4}$ is 2, i.e., $m_2 = m_4 = 1$.

Returning to the general case, $4\qform{3}$ is isomorphic to $4\qform{1}$.
This completes the proof of Th.~\ref{E8.kill}.$\hfill\qed$.
\end{borel*}

\begin{eg} \label{Pf4}
As a corollary of the proof, we can see that in case $q$ is a 4-Pfister quadratic form, there is a $\Spin(q)$-invariant quadratic form on each half-spin representation that is isomorphic to $8q$.  Indeed, we can write $q$ as a product $q_1 q_2$ of 2-Pfister forms, and let $Q_i$ be the quaternion algebra with norm $q_i$.  The tensor product $(Q_i, \binv) \ot (Q_i, \binv)$ is $M_4(k)$ endowed with an orthogonal involution adjoint to $q_i$.  Setting $Q_3 = Q_1$ and $Q_4 = Q_2$ and evaluating \eqref{D8.2} with $c = \mu_2 = \mu_4 = 1$ gives the claim.
\end{eg}

\section{The Killing form and $E_8$'s arising from \eqref{const.map}}

This section gives some properties of the groups of type $E_8$ constructed via \eqref{const.map}, proved using the calculation of the Killing form in the preceding section.

\begin{eg} \label{num.eg}
The classification of groups of type $E_8$ over the real numbers was recalled in Example \ref{real.eg0}.   Such groups are distinguished (up to isomorphism) by their Killing forms.
We now observe that construction \eqref{const.map} produces all three groups.  There are only two possibilities (split or not) for each of the four quaternion algebras as well as for $c$.
\[
\begin{tabular}{|rp{1.5in}|p{0.75in}c|} \hline
$c$&\# of nonsplit quaternion algebras&signature of Killing form&description of group \\ \hline
1 & 0 through 4 & 8 &split \\
$-1$ & 0 or 2 & 8 & split \\
$-1$&1 or 3 & $-24$ & isotropic \\
$-1$ & 4 & $-248$ & anisotropic/compact \\ \hline
\end{tabular}
\]

Chernousov's Hasse Principle \cite[p.~286, Th.~6.6]{PlatRap} for $E_8$ implies that construction \ref{const.map} produces every group of type $E_8$ over a number field.
\end{eg}

Recall the definition of $\kappa(G)$ from \ref{kappa.def}; it measures the difference of the reduced Killing form of $G$ from the same form for the split $E_8$.

\begin{prop}[$\chr k \ne 2, 3$] \label{E8.witt}
For a group $G$
of type $E_8$ constructed from quaternion algebras $Q_1, Q_2, Q_3, Q_4$ and $c \in \ksq$, we have
\[
\kappa(G) = \pform{c} \left[ 
 4 \sum_i Q_i - 2 \sum_{i<j} Q_i Q_j + \sum_{i<j<\ell} Q_i Q_j Q_\ell 
\right] \quad \in I^5.
\]
\end{prop}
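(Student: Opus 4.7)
The plan is to substitute the formula of Theorem~\ref{E8.kill} into the definition $\kappa(G) = \qform{2}(\rkill_{E_8} - \rkill_G)$, simplify in $W(k)$, and then rewrite the resulting combination of the $Q'_i$ in terms of the $Q_i$. Setting $S := \sum_i Q'_i + \sum_{i<j<\ell} Q'_i Q'_j Q'_\ell$, Example~\ref{split.rkill} and Theorem~\ref{E8.kill} give
\[
\rkill_{E_8} - \rkill_G \;=\; 8 - 8\qform{2c} + \qform{2}\pform{c}\,S.
\]
Multiplying by $\qform{2}$ and using $\qform{2}^2 = \qform{1}$ and $\qform{2}\qform{2c} = \qform{c}$, one gets $\kappa(G) = 8\qform{2} - 8\qform{c} + \pform{c}\,S$.

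The first genuine simplification is the Witt-ring identity $8\qform{2}=8$: since the 3-Pfister form $8 = \pform{-1,-1,-1}$ is multiplicative and represents $2 = 1+1$, it is similar to itself via $\qform{2}$, i.e.\ $\qform{2}\cdot 8 = 8$ in $W(k)$. Therefore $8\qform{2} - 8\qform{c} = 8 - 8\qform{c} = 8\pform{c}$, giving $\kappa(G) = \pform{c}(8 + S)$.

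It remains to rewrite $8+S$ in the claimed form. Substituting $Q'_i = Q_i - \qform{1}$ (in $W(k)$) into the right-hand side of the claim, the expression $4\sum_i Q_i - 2\sum_{i<j} Q_iQ_j + \sum_{i<j<\ell} Q_iQ_jQ_\ell$ expands by elementary binomial bookkeeping to give: a constant contribution $4\cdot 4 - 2\binom{4}{2} + \binom{4}{3} = 8$, a $Q'_i$-coefficient of $4 - 2\cdot 3 + 3 = 1$, a $Q'_iQ'_j$-coefficient of $-2 + 2 = 0$, and a $Q'_iQ'_jQ'_\ell$-coefficient of $1$. Matching these against $8 + \sum Q'_i + \sum Q'_iQ'_jQ'_\ell$ gives the identity.

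Finally, for the $I^5$-membership: each $Q_i = \pform{a_i,b_i}$ is a 2-Pfister form in $I^2$, and $\pform{c}\in I$, so $\pform{c}\cdot 4 Q_i = \pform{-1,-1,c,a_i,b_i}$ is a 5-Pfister in $I^5$, while $\pform{c}\cdot 2 Q_iQ_j \in I^6$ and $\pform{c}\cdot Q_iQ_jQ_\ell \in I^7$; the sum lies in $I^5$. The only step demanding any thought is the collapse $8\qform{2}=8$, and beyond that the argument is formal bookkeeping once Theorem~\ref{E8.kill} is in hand, so there is no substantive obstacle remaining.
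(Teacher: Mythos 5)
Your proof is correct and takes the same route as the paper: combine Example~\ref{split.rkill} with Theorem~\ref{E8.kill} to reach the intermediate expression $\kappa(G)=\pform{c}\bigl(8+\sum Q'_i+\sum Q'_iQ'_jQ'_\ell\bigr)$, then expand by substituting $Q'_i = Q_i - 1$. You fill in two details the paper leaves implicit (the roundness argument $8\qform{2}=8$ and the binomial bookkeeping for the re-expansion), both of which check out.
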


We use the notation $\pform{x_1, \ldots, x_n}$ for the tensor product $\qform{1, -x_1} \ot \cdots \ot \qform{1, -x_n}$.

\begin{proof}[Proof of Prop.~\ref{E8.witt}]
Example \ref{split.rkill} and Th.~\ref{E8.kill} give
\[
\kappa(G) = \pform{c} \left( 8 + \sum_i Q'_i + \sum_{i < j < \ell} Q'_i Q'_j Q'_\ell \right).
\]
Replacing each $Q'_i$ with $Q_i - 1$ and expanding gives the displayed formula.
The form is obviously in $I^5$. 
\end{proof}

\begin{borel*} \label{inv.I5}
It is easy to see that $I^5$ in Prop.~\ref{E8.witt} ``cannot be improved", i.e., that $\kappa(G)$ need not lie in $I^6$.  One can take $k$ to be $\R$ with indeterminates $x, y, c$ adjoined, and put $Q_1 := (x, y)$ and $Q_2, Q_3, Q_4$ split.  The resulting $G$ has
$\kappa(G) = 4 \pform{c, x, y}$.

Similarly, if $-1$ is a square in $k$, then $2 = 0$ in the Witt ring and $\kappa(G) = \pform{c} \sum Q_i Q_j Q_\ell$ belongs to $I^7$.  Again, this cannot be improved, as can be seen by taking $Q_1, Q_2, Q_3$ to be ``generic" quaternion algebras, and $Q_4$ to be split.  
 \end{borel*}
 
 \smallskip
 However, the Killing forms of the groups $G$ constructed from \eqref{const.map} are mainly of interest in case the Rost invariant $r(G)$ is zero. That case is treated by the following theorem provided by Detlev Hoffmann.
 
\begin{thm}[Hoffmann] \label{DWH}
Let $G$ be a group of type $E_8$ constructed via \eqref{const.map} from quaternion algebras $Q_1, Q_2, Q_3, Q_4$ and $c \in \kx$.  If $r(G) = 0$, then
\[
\kappa(G) = 2\pform{c} Q_1 Q_2 Q_4 \quad \in I^8.
\]
\end{thm}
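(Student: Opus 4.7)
\smallskip
\noindent
\textbf{Proof plan.} The starting point is Proposition~\ref{E8.witt}, which expresses
\[
\kappa(G) \,=\, \pform{c}\bigl(4T - 2S + U\bigr),
\]
where $T = \sum_i Q_i$, $S = \sum_{i<j} Q_i Q_j$, and $U = \sum_{i<j<\ell} Q_i Q_j Q_\ell$ are the elementary symmetric functions in the quaternion forms $Q_i \in I^2 k$. By Corollary~\ref{e8const.rost}, the Rost invariant of $G$ is $(c) \cp \sum [Q_i] \in H^3(k, \Zm2)$, and the Milnor--Voevodsky identification $I^3/I^4 \cong H^3(k, \Zm2)$ rewrites the hypothesis $r(G)=0$ as the single Witt-ring statement $\pform{c}\, T \in I^4$. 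The plan is to use this as the only input, to draw out all of its multiplicative consequences, and to collapse $\kappa(G)$ onto the $8$-fold Pfister form $2\pform{c}Q_1Q_2Q_4$.

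The basic identities I would exploit come from $Q_i^2 = 4 Q_i$ (since $\pform{a_i,b_i}^2 = \pform{-1,-1,a_i,b_i}$), together with the derived relations $T^2 = 4T + 2S$, $TS = 8S + 3U$, and $T^3 = 16T + 24S + 6U$. A direct check with these yields the compact factorization
\[
6 \kappa(G) \,=\, \pform{c}\, T\, (T - 8)(T - 10),
\]
so $6\kappa(G)$ already lies in $I^{4}\cdot I^{2}\cdot I = I^{7}$. A richer supply of relations comes from multiplying the hypothesis by elements of $I^4$: for each of the six pairs $\{i,j\} \subset \{1,2,3,4\}$ with complement $\{m,n\}$, the ``pair relation''
\[
8\pform{c}Q_iQ_j + \pform{c}Q_iQ_jQ_m + \pform{c}Q_iQ_jQ_n \,\in\, I^8
\]
holds; squaring gives $(\pform{c}T)^2 = 8\pform{c}T + 4\pform{c}S \in I^8$; and multiplying by $T^2 \in I^4$ gives $\pform{c}T^3 = 16\pform{c}T + 24\pform{c}S + 6\pform{c}U \in I^8$.

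With these identities in hand I would first show that $\kappa(G) \in I^8$ by tracking its class in each successive quotient $I^n/I^{n+1} = H^n(k, \Zm2)$: the three summands of $\pform{c}(4T - 2S + U)$ lie in $I^6$, $I^6$, and $I^7$ respectively, so $\kappa(G) \in I^6$ without any hypothesis; the square and cubic relations then force the vanishing of the Arason-type images $e_6(\kappa(G))$ and $e_7(\kappa(G))$, pushing $\kappa(G)$ into $I^8$. Next, summing the six pair relations over the three ``complementary'' partitions of $\{1,2,3,4\}$ eliminates $\pform{c}U$ modulo $I^8$ in favor of $\pform{c}S$, which in turn is eliminated via the square relation; the $\sym_4$-symmetry of Proposition~\ref{Q.perm} is manifested as the fact that, modulo $I^8$, the four triple products $\pform{c}Q_iQ_jQ_\ell$ all agree, and the result is the congruence $\kappa(G) \equiv 2\pform{c}Q_1Q_2Q_4 \pmod{I^8}$.

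The hard part will be promoting this mod-$I^8$ congruence to the exact equality in $W(k)$ asserted by the theorem. The plan is to use descent: both $\kappa(G)$ and $2\pform{c}Q_1Q_2Q_4$ are visibly divisible by $\pform{c}$, so both become hyperbolic over $K := k(\sqrt{c})$, and by Pfister's theorem on the kernel of $W(k) \to W(K)$ the difference lies in $\pform{c}\cdot W(k)$. Combined with the control in $I^8$ and the fact that the target is a single $8$-Pfister form (hence anisotropic of dimension $256$ or zero), one rules out all remaining possibilities for the difference via the Arason--Pfister Hauptsatz together with Hoffmann's structural results on sums of Pfister forms divisible by a fixed $1$-Pfister. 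This final dimension-counting bookkeeping, rather than the initial Witt-ring manipulations, is the delicate step that I expect to be the main obstacle.
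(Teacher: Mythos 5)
Your Witt-ring algebra is correct as far as it goes, and you correctly identify that the hypothesis $r(G)=0$ unpacks to $\pform{c}\sum Q_i \in I^4$. But the architecture of your argument cannot produce the theorem, because the theorem asserts an \emph{exact} equality $\kappa(G) = 2\pform{c}Q_1Q_2Q_4$ in the Witt ring, and essentially everything you derive from the hypothesis (by multiplying $\pform{c}T \in I^4$ against elements of $I^4$) is a congruence modulo $I^8$. Once you have established — as you intend — that $\kappa(G)$ and $2\pform{c}Q_1Q_2Q_4$ both lie in $I^8$, a congruence between them modulo $I^8$ is vacuous: it says their difference lies in $I^8$, which you already know. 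Your final paragraph correctly senses this tension, but the fallback you propose does not close it. Descent to $k(\sqrt{c})$ and Pfister's kernel theorem only put the difference in $\pform{c}W(k)\cap I^8$, and this is typically a large group (take $k$ with several indeterminates); no amount of dimension counting or Arason--Pfister will force a general element of it to vanish.

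The missing ingredient is a structural theorem of Hoffmann, cited in the paper as \cite[Cor.]{Hoff:izh}: if two $12$-dimensional forms in $I^3$ are congruent modulo $I^4$, then one is a scalar multiple of the other. The paper applies this to the $12$-dimensional forms $\pform{c}(Q'_1-Q'_2)$ and $\pform{c}(Q'_3-Q'_4)$, which the hypothesis makes congruent mod $I^4$, to extract an \emph{exact} Witt-ring similarity $\pform{c}(Q_1-Q_2) = \qform{m}\pform{c}(Q_3-Q_4)$ for some $m\in\kx$. This is where the hypothesis is converted from a cohomological statement into a genuine Witt-ring identity. From there one obtains exact relations such as $\pform{c}(Q_1-Q_2)^2 = \pform{c}(Q_3-Q_4)^2$ and $\pform{c}Q_1Q_2(Q_3-Q_4) = 0$, and the rest of the proof is the kind of symmetric-function manipulation you carry out, but now producing an exact equality rather than a congruence. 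Your identities like $T^2 = 4T + 2S$ and the factorization $6\kappa(G) = \pform{c}T(T-8)(T-10)$ are fine in themselves, but without the linkage theorem they cannot reach the conclusion. I would recommend locating the relevant statement of Hoffmann and restructuring the proof around the exact relation it provides.
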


\begin{proof}
The hypothesis implies that $\pform{c} \sum Q_i$ belongs to $I^4$, hence that $\pform{c} (Q'_1 - Q'_2)$ and $\pform{c} (Q'_3 - Q'_4)$ are congruent mod $I^4$, so \cite[Cor.]{Hoff:izh}\footnote{The statement of this result includes the hypothesis that the 12-dimensional form $\pform{c} (Q'_1 - Q'_2)$ is anisotropic, but it is unnecessary.} gives that
\[
\parbox{4in}{$\pform{c} (Q_1 - Q_2) = \qform{m} \pform{c} (Q_3 - Q_4)$ for some $m \in \kx$.}
\]
This implies that
\begin{equation} \label{DWH.1}
\pform{c} (Q_1 - Q_2)^2 = \pform{c} (Q_3 - Q_4)^2
\end{equation}
in the Witt ring.  Further,
\begin{equation} \label{DWH.2}
\pform{c} Q_1 Q_2 (Q_3 - Q_4) = \qform{m} \pform{c} Q_1 Q_2 (Q_1 - Q_2) = 0
\end{equation}
and similarly
\begin{equation} \label{DWH.3}
\pform{c} Q_1 (Q_3 - Q_4)^2 = \pform{c} Q_1 (Q_1 - Q_2)^2 = 4 \pform{c} Q_1 (Q_1 - Q_2).
\end{equation}
Of course, the roles of $Q_1$ and $Q_2$ are not special, and the same identities hold for every permutation of the subscripts.

We now compute:
\begin{align*}
\pform{c} \left( 4 \sum Q_i - 2 \sum Q_i Q_j \right) &= \pform{c} (\sum Q_i^2 - 2 \sum Q_i Q_j) \\
&= \pform{c} \left( \sum_{i<j} (Q_i - Q_j)^2 - 2 \sum Q_i^2 \right).
\end{align*}
Applying \eqref{DWH.1}, we find:
\begin{align*}
\pform{c} \left( 4 \sum Q_i - 2 \sum Q_i Q_j \right) &= \pform{c} \left( 2 (Q_1 - Q_2)^2 + 2(Q_1 - Q_3)^2 + 2(Q_1 - Q_4)^2 - 2 \sum Q^2_i \right) \\
&= \pform{c} \left[ 4 Q_1^2 - Q_1 (4 Q_2 + 4Q_3 + 4Q_4) \right].
\end{align*}
We can replace the $4Q_3 + 4Q_4$ with $(Q_3 - Q_4)^2 + 2Q_3 Q_4$, and further replace that with $4Q_1 - 4Q_2 + 2Q_3 Q_4$ by \eqref{DWH.3}, i.e.,
\begin{align*}
\pform{c} \left( 4 \sum Q_i - 2 \sum Q_i Q_j \right) &= \pform{c} \left[ 4Q_1^2 - 4Q_1 Q_2 - 4Q_1^2 + 4Q_1 Q_2 - 2Q_1 Q_3 Q_4 \right] \\
&= -2\pform{c} Q_1 Q_3 Q_4.
\end{align*}

Evaluating the full Killing form, we have:
\begin{align*}
\kappa(G)  &= \pform{c} (Q_1 Q_2 Q_3 - Q_1 Q_3 Q_4 + Q_2 Q_3 Q_4 + Q_1 Q_2 Q_4) \\
&= \pform{c} \left[ Q_1 Q_2 (Q_3 - Q_4) + 2Q_1 Q_2 Q_4 - (Q_1 - Q_2) Q_3 Q_4 \right],
\end{align*}
and the claim follows by applying \eqref{DWH.2} twice.
\end{proof}

Of course, the roles of $Q_1, Q_2, Q_4$ in the theorem are not special, and one can take any three of the four quaternion algebras by \eqref{DWH.2}.

\begin{rmk}
One can view Th.~\ref{DWH} as giving a relationship amongst four symbols in $H^d(k, \Zm2)$ that sum to zero.  For three symbols, one has the Elman-Lam Linkage Thoerem \cite[X.6.22]{Lam}.
\end{rmk}

\begin{eg}
Suppose that $k$ is a field with $2^3 \cdot I^5 \ne 0$ in the Witt ring; for example, this happens if $k$ is formally real.  Then there is a 5-Pfister form $\phi$ such that $2^3 \phi \ne 0$.  We write $\phi = \pform{c} q_1 q_2$ for 2-Pfister forms $q_1, q_2$, and $c \in \kx$.  Taking
\begin{itemize}
\item construction \eqref{const.map} where $Q_1, Q_2, Q_3, Q_4$ have norms $q_1, q_2, q_1, q_2$ respectively, or
\item Tits's construction \ref{Tits.const} with a reduced Albert algebra such that $\gamma_3 = \phi_3 = \pform{c} q_1$ and $\phi_5 = \phi$,
\end{itemize}
one obtains a group $G$ of type $E_8$ over $k$ that has zero Rost invariant, has $\kappa(G) = 8\phi$ nonzero, and is anisotropic (by Prop.~\ref{index}(1)).
\end{eg}

\section{A conjecture, and its consequences} \label{conj.sec}

Consider the following statement:
\begin{equation} \label{PFC.con}
\parbox{4in}{\emph{For every odd-degree separable extension $K/k$ and every central simple $K$-algebra of degree $16$ with orthogonal involution $\As$ in $I^3K$, we have: If $e_3\As$ is zero, then there is an odd-degree separable extension $L/K$ such that $\As \ot L$ is completely decomposable.}}
\end{equation}
By Prop.~\ref{pfinv}, if $e_3\As$ is zero, then $e_3\As$ is generically Pfister.  That is, \eqref{PFC.con} is somewhat weaker than a ``yes" answer to Question \ref{4pf.ques}.  It is natural to hope that \eqref{PFC.con} holds for every field $k$ of characteristic different from 2.

We have:
\begin{thm}\label{serre.thm}
Suppose that $k$ is a field of characteristic $\ne 2, 3$ for which \eqref{PFC.con} holds.  Then
the map $G \mapsto \rkill_{E_8} - \rkill_{G}$ defines a function
\[
\fbox{\parbox{2in}{Groups of type $E_8$ over $k$ whose Rost invariant has odd order}} \ra I^8(k).
\]
\end{thm}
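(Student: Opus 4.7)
The plan is to reduce $G$, over a chain of odd-degree extensions, to a group produced by construction \eqref{const.map} with vanishing Rost invariant, apply Theorem \ref{DWH} to compute its Killing form there, and descend by odd-degree extension theorems for $I^n$. Since $\qform{2}$ is a unit in the Witt ring, writing $\kappa(G)=\qform{2}(\rkill_{E_8}-\rkill_G)$ as in \ref{kappa.def}, the theorem is equivalent to $\kappa(G)\in I^8(k)$. As a baseline, Lemma \ref{inI4} gives $\kappa(G)\in I^4(k)$: since $r_{E_8}$ takes values in $60$-torsion, odd order of $r_{E_8}(G)$ forces the order to divide $15$, whence $30\,r_{E_8}(G)=0$.

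Because $r_{E_8}(G)$ has odd order, a standard transfer argument (passing to the fixed field of a Sylow pro-$2$ subgroup of the absolute Galois group of $k$, then shrinking to a finite subfield) supplies an odd-degree separable extension $K_0/k$ with $r_{E_8}(G_{K_0})=0$. The crucial next step is to pass, via a further odd-degree extension $K_1/K_0$, to an $\eta\in H^1(K_1,\hspin_{16})$ whose image in $H^1(K_1,E_8)$ is $G_{K_1}$. This is the ``pre-existing conjecture regarding groups of type $D_8$'' alluded to in the introduction and is the main obstacle of the proof. Granting it, by Lemma \ref{class} the class $\eta$ determines a pair $\As$ of degree $16$ in $I^3 K_1$; and by the very definition of $\esix$ in \S\ref{e3.sec}, the invariant $\esix\As$ is represented by the image $r_{E_8}(\eta)$ in $H^3(K_1,\Zm4)/[A]\cdot H^1(K_1,\mmu2)$, so $r_{E_8}(G_{K_1})=0$ gives $\esix\As=0$, and a fortiori $e_3\As=0$.

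Applying the hypothesis \eqref{PFC.con} to $\As$ over the odd-degree extension $K_1/k$, we obtain an odd-degree extension $L/K_1$ over which $\As\otimes L\cong\bigotimes_{i=1}^{4}(Q_i,\binv)$ for quaternion $L$-algebras $Q_1,\dots,Q_4$. Consequently $G_L$ lies in the image of \eqref{const.map}, produced by some data $(Q_1,Q_2,Q_3,Q_4,c)$; by Corollary \ref{e8const.rost}, $r_{E_8}(G_L)=(c)\cdot\sum[Q_i]$, which vanishes since $r_{E_8}(G_L)=0$ already. Theorem \ref{DWH} then yields $\kappa(G_L)=2\pform{c}\,Q_1 Q_2 Q_4\in I^8(L)$. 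Finally, since $[L:k]$ is odd, standard descent for $I^n$ under odd-degree extensions (using Voevodsky's resolution of the Milnor conjecture, which guarantees that $H^n(k,\Zm2)$ injects into $H^n(L,\Zm2)$ via transfer) gives $\kappa(G)\in I^8(k)$, as required. Every ingredient apart from the $D_8$-covering assertion in the second paragraph is a direct assembly of the paper's own results.
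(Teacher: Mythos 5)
Your overall architecture matches the paper's proof: reduce to a completely decomposable involution over an odd-degree extension, apply Theorem~\ref{DWH}, and descend using transfer for $I^n$. However, there is one genuine gap, and it is exactly the step you flag as ``granting it'': the passage to $\eta \in H^1(K_1,\hspin_{16})$ mapping to $G_{K_1}$. You attribute this to the ``pre-existing conjecture regarding groups of type $D_8$'' and leave it unproved. That attribution is wrong, and so the step is left as an unclosed hole. The conjecture the introduction alludes to is \eqref{PFC.con} itself (a statement about degree-$16$ algebras with involution, hence groups of type $D_8$), which is already a \emph{hypothesis} of Theorem~\ref{serre.thm} and which you correctly invoke two paragraphs later for the complete decomposability step. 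The reduction to $\hspin_{16}$, by contrast, is an unconditional fact: the paper proves it by observing that $\hspin_{16}$ contains a maximal torus of $E_8$ and that the $2$-Sylow subgroups of the two Weyl groups both have order $2^{14}$, and then applying the standard transfer argument as in the proof of \cite[13.7]{G:lens}. (One checks that the $2$-primary part of $|W(D_8)| \cdot 2 = 2^7 \cdot 8! \cdot 2$ equals $2^{14} = $ the $2$-part of $|W(E_8)|$.) Once you supply that argument, the rest of your proof is sound. Two minor remarks: you kill the Rost invariant before reducing to $\hspin_{16}$, whereas the paper does these in the opposite order, but this is immaterial; and your closing descent via injectivity of odd-degree transfer on $H^n(\,\cdot\,,\Zm2)$ is the same iteration with the $e_n$ invariants of \cite{OVV} that the paper performs, just phrased differently.
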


\begin{proof}
We first observe that there is a separable extension $K/k$ of odd degree such that $\res_{K/k}G \in H^1(K, E_8)$ is the image of some $\eta \in H^1(K, \hspin_{16})$.  This follows from the fact that $\hspin_{16}$ contains a maximal torus of $E_8$ and that the 2-Sylow subgroups of the Weyl groups have order $2^{14}$ in both cases, see e.g.\ the proof of \cite[13.7]{G:lens}.  (For $\hspin_{16}$, one checks that the 2-primary part of $2^7 \, 8!$ is $2^{14}$.)

We enlarge $K$ so that it also kills $r(G)$.  By \eqref{PFC.con}, there is an odd-degree extension $L/K$ such that the image of $\eta$ in $H^1(L, \PSO_{16})$ is the class of a tensor product $\otimes_{i=1}^4 (Q_i, \binv)$, and it follows that $\res_{L/k}G$ is in the image of \eqref{const.map}.  

By Lemma \ref{inI4}, $\kappa(G)$ is in $I^4 k$.  But over the odd-degree extension $L/k$, $\kappa(G)$ is in $I^8 L$ by Th.~\ref{DWH}.  Write $e_n$ for the invariant $I^n(*) \ra H^n(*, \Zm2)$ defined in \cite{OVV} such that for every extension $F/k$, the map $(e_n)_F$ is an additive homomorphism with kernel $I^{n+1}F$.  
Here, $e_4(\kappa(G))$ is killed by $L$ and so is zero, hence $\kappa(G)$ is in $I^5  k$.  Repeating this with $e_5, e_6$, and $e_7$ shows that $\kappa(G)$ is in $I^8 k$.
\end{proof}

\medskip

\noindent{\small{\textbf{Acknowledgments.} I thank D.W.~Hoffmann and K.~Zainoulline for providing Theorem \ref{DWH} and Prop.~\ref{kirill} respectively, and V.~Chernousov, R.~Parimala, A.~Qu\'eguiner-Mathieu, and J.-P.~Tignol  for their comments.  I gratefully acknowledge the support by NSF grant DMS-0654502.  Part of the research for this paper was done during a visit at IHES, and I thank them for providing an excellent working environment.}}

\appendix
\section{Non-hyperbolicity of orthogonal involutions\\{\protect\textrm{By Kirill Zainoulline}}} \label{kirill.sec}

The purpose of the following notes is to prove the following

\begin{prop} \label{kirill}
Let $(A, \sigma)$ be a central simple algebra with 
orthogonal involution over a field $k$ of characteristic $\ne 2$.  
If $\deg A / \ind A$ is odd, 
then the involution $\sigma$ is not hyperbolic 
over the function field of the Severi-Brauer variety of $A$.  
\end{prop}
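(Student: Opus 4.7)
The plan is to argue by contradiction: suppose $\sigma \otimes k_A$ is hyperbolic, and deduce that $m := \deg A / \ind A$ must be even. Write $A = M_m(D)$ with $D$ the underlying division algebra, of index $d := \ind A$. If $d = 1$ then $A$ is split and $\sigma$ is adjoint to a quadratic form of odd dimension $m$, which cannot be hyperbolic over any extension; so we may assume $d \ge 2$, whence $d$ is a power of $2$ since $A$ carries an involution of the first kind.

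I would then pass to the hermitian-form setting via Morita equivalence, identifying $(A,\sigma)$ with $(\mathrm{End}_D(V), \sigma_h)$ for a hermitian form $h$ of rank $m$ over $(D,\tau)$, where $\tau$ is a suitably chosen involution of the first kind on $D$. Subtracting hyperbolic planes, we may assume $h$ is anisotropic over $k$; its rank stays congruent to $m$ modulo $2$, hence odd. Hyperbolicity of $\sigma$ over $k_A$ is then equivalent to vanishing of $h$ in the hermitian Witt group $W(D \otimes k_A, \tau)$, which via Morita (since $k_A$ splits $D$) is isomorphic to the ordinary Witt group $W(k_A)$. Crucially, the image has dimension $md$, which is even once $d\ge 2$, so the naive parity obstruction is not enough; one needs a finer invariant that remembers that $h$ comes from a hermitian form of rank $m$ over $D$.

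The main step is to exploit the geometry of $\mathrm{SB}(A)$ itself to obtain the contradiction. Consider the involution variety $X_\sigma$ parametrizing maximal $\sigma$-isotropic right ideals of $A$; hyperbolicity of $\sigma \otimes k_A$ is equivalent to the existence of a $k_A$-rational point on $X_\sigma$, i.e., to a rational map $\mathrm{SB}(A) \dashrightarrow X_\sigma$ over $k$. I would analyze this motivically: the Chow motive of $\mathrm{SB}(A)$ decomposes (over $k$) into Tate twists of the motive of $\mathrm{SB}(D)$, with the multiplicities of these summands controlled by $m$, while the analogous decomposition for $X_\sigma$ carries a top-degree summand forced to have even multiplicity. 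Comparing the two via the rational map---in the spirit of index-reduction arguments à la Merkurjev--Karpenko, using that the index of $A$ and the geometry of $X_\sigma$ are tightly linked---rules out the correspondence when $m$ is odd. The main obstacle is this motivic bookkeeping: translating the odd-rank hypothesis on $h$ into an obstruction to a correspondence $\mathrm{SB}(A) \dashrightarrow X_\sigma$, which is where one invokes the theory of upper motives of generalized Severi--Brauer and involution varieties.
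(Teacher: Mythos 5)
Your general direction---arguing that hyperbolicity of $\sigma$ over $k_A$ would force a rational map $\mathrm{SB}(A)\dashrightarrow X_\sigma$, then obstructing that map by comparing Chow-motivic decompositions---is legitimate, and is in fact the flavour of the alternative proof the paper itself mentions, Karpenko's divisibility theorem for the Witt index \cite[Th.~3.3]{Karp:iso}, which rests on upper-motive theory. The paper's own argument is also motivic but is packaged differently: it works with the varieties of Borel subgroups of $G=\hspin(A,\sigma)$ and $G'=\PGL(A)$ rather than with $\mathrm{SB}(A)$ and $X_\sigma$, encodes everything in the $J$-invariants of $G$ and $G'$ for $p=2$, and derives the contradiction from an explicit comparison of the resulting Poincar\'e-type generating functions: hyperbolicity over $k(\mathrm{SB}(A))$ forces $\mathcal{R}_2(G)\simeq\mathcal{R}_2(G')$, but the generating function for $G'$ is $(1-t^{2^s})/(1-t)$ with $2^s=\ind A$, while the first entry $j_1$ of the $J$-invariant of $G$ is bounded by $v_2(\tfrac12\deg A)=s-1<s$ precisely because $\deg A/\ind A$ is odd, and the two generating functions cannot agree.

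The genuine gap in your write-up is that the motivic comparison you describe is precisely the hard part, and you have not supplied it. Your preliminary reductions (ruling out $d=1$, noting $d=\ind A$ is a $2$-power, passing to an anisotropic hermitian form, observing that the naive parity of $\dim q_\sigma=md$ gives nothing once $d\ge2$) are fine; but the step ``the analogous decomposition for $X_\sigma$ carries a top-degree summand forced to have even multiplicity, and comparing the two via the rational map rules out the correspondence when $m$ is odd'' is asserted, not proved. Nothing in the proposal explains \emph{where} the oddness of $m=\deg A/\ind A$ actually enters the motivic bookkeeping; that is exactly the content that the paper extracts from \cite[Table~6.3]{Jinv} via the $2$-adic valuation $v_2(\tfrac12\deg A)=s-1$. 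Moreover the decomposition of $M(\mathrm{SB}(A))$ is not simply into Tate twists of $M(\mathrm{SB}(D))$---generalized Severi--Brauer varieties enter---so even the setup of your comparison needs care. As written, the proposal correctly identifies a viable toolbox but stops at the point where the actual proof begins; you would need either to import Karpenko's Witt-index divisibility theorem as a black box, or to carry out a concrete invariant comparison such as the $J$-invariant computation in the appendix.
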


This result can also be deduced from the divisibility of the Witt index of $(A,\sigma)$ proved recently by N.~Karpenko (see \cite[Th.~3.3]{Karp:iso}).
Our arguments use the notion of the $J$-invariant instead.

\begin{proof}[Proof of Prop.~\ref{kirill}]
The case where $A$ has index 1 is clear and 
the index 2 case is \cite[Prop.~3.3]{PSS:herm},
so we may assume that $A$ has index at least 4 and hence 
degree is divisible by 4.  
Further, we may assume that $(A, \sigma)$ is in $I^3$ as in Example \ref{In.egs}\eqref{In.3}, 
otherwise the conclusion is obvious. 

Consider the groups 
$G=\hspin(A,\sigma)$ and $G'=\PGL(A)$.
Let $X$ and $X'$ be the respective varieties of Borel subgroups.
\begin{equation}\label{kirill:mainas}
\text{Assume that $\sigma$ is hyperbolic over the function
field $k(SB(A))$.}
\end{equation} 
Then the group $G$ is split over $k(SB(A))$ and, hence, over $k(X')$.
Since the group $G$ is split over $k(X)$, the algebra $A$ and the group $G'$ 
are split over $k(X)$.

By the main result of the paper \cite{Jinv} (Theorems 4.9 and 5.1)
to any simple linear algebraic
group of inner type over $k$ and its torsion prime $p$
one may associate an indecomposable Chow motive $\mathcal{R}_p$ such that
over the algebraic closure $\bar k$ of $k$ the generating function of $\mathcal{R}_p$
is given by the product of $r$ cyclotomic polynomials
\[
\prod_{i=1}^r 
\frac{1-t^{d_i2^{j_i}}}{1-t^{d_i}}
\text{ , where }0\le j_i\le k_i\text{ and }d_i>0
\]
and the explicit values of the parameters $d_i$ and bounds $k_i$ 
are provided in \cite[Table~6.3]{Jinv}.
The $r$-tuple of integers $(j_1,j_2,\ldots,j_r)$ is called the $J$-invariant.

Let $\mathcal{R}_2(G)$ and $\mathcal{R}_2(G')$ 
be the respective motives for the groups $G$ and 
$G'$ and for $p=2$. By \cite[Prop.~5.3]{Jinv} applied
to $G$ over $k(X')$ and $G'$ over $k(X)$
we obtain the following motivic reformulation 
of the assumption \eqref{kirill:mainas}:
\begin{equation}\label{kirill:moteq}
\mathcal{R}_2(G)\simeq \mathcal{R}_2(G').
\end{equation}

Since the group $G'$ is a twisted form of the group $\PGL_{\deg A}$, 
by the first line of \cite[Table~6.3]{Jinv}
the $J$-invariant of $G'$ has only one entry ($r=1$), 
and the parameter $d_1$ is $1$.  
Then by the proof of \cite[Lemma 7.3]{Jinv},
we obtain that the $J$-invariant is the 
list consisting of the single element $s$, 
where $2^s$ is the index of $A$.  
Hence the generating function of $\mathcal{R}_2(G')$ is 
$(1-t^{2^s})/(1-t)$.

Similarly, since the group $G$ is a twisted form of the group 
$\hspin_{\deg A}$,
by \cite[Table~6.3]{Jinv} the $J$-invariant of $G$
has $\tfrac{1}{4}\deg A$ entries with $d_i=2i-1$ and the following
inequality holds
$$
j_1\le k_1=v_2(\tfrac{1}{2}\deg A)=v_2(2^{s-1}\cdot\tfrac{\deg A}{\ind A})=
s-1<s,
$$ 
where $v_2$ is the 2-adic valuation.  (Here we essentially 
use that $\tfrac{\deg A}{\ind A}$
is odd.)

The isomorphism \eqref{kirill:moteq} implies the equality
of the respective generating functions, namely
$$
\frac{1-t^{2^s}}{1-t}=\prod_{i=1}^{s/2} 
\frac{1-t^{(2i-1)2^{j_i}}}{1-t^{2i-1}}\text{ , where }j_1 <s.
$$
We claim that it never holds.
Indeed, comparing the coefficients at $t^2$ and $t^3$ of the polynomials 
at the left and the right hand side, we conclude that $j_1\ge 2$ and $j_2=0$.
Then comparing them consequently at powers $t^{2i-2}$ and $t^{2i-1}$, $i\ge 3$ 
we conclude that $2^{j_1}\ge 2i-1$ and $j_i=0$.
Therefore, $j_3=\cdots=j_{s/2}=0$ and $j_1$
must coincide with $s$, which is not the case, since $j_1<s$.

Hence, the assumption
\eqref{kirill:mainas} fails and the lemma is proven.
\end{proof}

\providecommand{\bysame}{\leavevmode\hbox to3em{\hrulefill}\thinspace}
\providecommand{\MR}{\relax\ifhmode\unskip\space\fi MR }
\providecommand{\MRhref}[2]{%
  \href{http://www.ams.org/mathscinet-getitem?mr=#1}{#2}
}
\providecommand{\href}[2]{#2}


\end{document}